\newtheorem{theorem}{Theorem}[section]
\newtheorem{lemma}[theorem]{Lemma}
\newtheorem{cor}[theorem]{Corollary}
\newtheorem{prop}[theorem]{Proposition}
\theoremstyle{definition}
\newtheorem{defn}[theorem]{Definition}
\newtheorem{remark}[theorem]{Remark}
\newtheorem{notation}[theorem]{Notation}
\numberwithin{equation}{theorem}
\def\Q{{\mathbb {Q}}_p}
\def\F{\mathcal{F}}
\def\ra{\rightarrow}
\def\OO{\mathcal{O}}
\def\Tor{\rm{Tor}}
\def\int{\mathrm{int}}
\def\rig{\mathrm{rig}}
\def\dif{\mathrm{dif}}
\def\D{\mathrm{D}}
\def\Sen{\mathrm{Sen}}
\def\m{(\varphi,\Gamma)}
\def\r{\mathcal{R}}
\title{Semi-stable periods of finite slope families}
\author{Ruochuan Liu}
\author{Ruochuan Liu \\ Beijing International Center for Mathematical Research\\Peking Univeristy\\
Beijing, 100871\\
liuruochuan@math.pku.edu.cn}
\begin{document}
\maketitle
\begin{abstract}
We introduce the notion of finite slope families to encode the local properties of the $p$-adic families of Galois representations appearing in the work of Harris, Lan, Taylor and Thorne on the construction of Galois representations for (non-self dual) regular algebraic cuspidal automorphic representations of $\mathrm{GL}(n)$ over CM fields. Our main result is to prove the analytic continuation of semi-stable (and crystalline) periods for such families. 
\end{abstract}

\tableofcontents
\section*{Introduction and notations}
In their ICM talk \cite{SU06}, Skinner and Urban outline a program to connect the order of vanishing of the $L$-functions of certain
polarized regular motives with the rank of the associated Bloch-Kato Selmer groups.
Their strategy is to deform those motives along certain $p$-adic families, the so called eigenfamilies, to construct the expected extensions.
To this end, they introduce the notion of \emph{finite slope families} of $p$-adic representations to encode the local properties of the Galois representations arising from those $p$-adic families. One may view the finite slope families as a generalization of the $p$-adic families of Galois representations arising from the Coleman-Mazur eigencurve. In \cite{BC06}, Bellaiche and Chenevier introduce the notion of \emph{weakly refined families} of $p$-adic representations to encode the local properties of the latter. More precisely, a family of weakly refined $p$-adic representations is a family of $p$-adic representations\footnote{Strictly speaking, Bellaiche-Chenevier use pseudo representations rather than genuine representations in their definition of weakly refined families.} over a rigid analytic space with a Zariski dense subset of crystalline points which have crystalline periods of a prescribed Frobenius eigenvalue and constant Hodge-Tate weight. Moreover, this constant weight is the largest one\footnote{We normalize the Hodge-Tate weight in the way that the $p$-adic cyclotomic character has Hodge-Tate weight 1.} of all Hodge-Tate weights, and the difference of this constant weight with any other weight is unbounded over the base. For example, in the case of the eigencurve, one can take the subset of all classical eigenforms and the prescribed Fronbenius eigenvalue and constant Hodge-Tate weight are the function of $U_p$-eigenvalues and $0$ respectively. On the other hand, finite slope families generalize weakly refined families in the way that they allow
\emph{multiple} prescribed Frobenius eigenvalues and constant Hodge-Tate weights. Skinner and Urban then use the (unproved) analytic continuation of crystalline periods of finite slope families to deduce that the extensions constructed by $p$-adic deformations lie in the Selmer groups.  

Most recently, Harris, Lan, Taylor and Thorne (and Scholze independently) construct Galois representations for (non-self dual) regular algebraic cuspidal automorphic representations of $\mathrm{GL}(n)$ over CM fields \cite{HLTT}.  It turns out that these Galois representations emerge from certain $p$-adic families whose local properties generalize Skinner and Urban's finite slope families by allowing prescribed semi-stable periods. Therefore, to show that the Galois representations constructed by Harris, Lan, Taylor and Thorne have the expected properties at $p$, one needs to show the analytic continuation of semi-stable periods for those $p$-adic families.

In this paper, we make use of the notion of finite slope families to encode the local properties of the $p$-adic families of Galois representations appearing in the work of Harris, Lan, Taylor and Thorne; this generalizes the original definition of Skinner and Urban. Our main result is then to prove the analytic continuation of semi-stable periods for such families. This will provide a necessary ingredient to Skinner and Urban's ICM program. Besides, we recently learned from Taylor that, in an ongoing project of Ila Varma, she will establish the expected properties of those Galois representations based on the results of this paper. We also note that recently Shah proves some results about interpolating Hodge-Tate and de Rham periods in families of $p$-adic Galois representations which may be applied to some related situations \cite{S}.

In the following, we state our main results precisely.
We fix a finite extension $K$ of $\Q$.
Let $K_0$ be the maximal unramified sub-extension of $K$, and let $f=[K_0:\Q]$. We also fix a finite extension $F$ of $\Q$ contained in $\overline{\mathbb{Q}}_p$ such that $\mathrm{Hom}(K, F)=\mathrm{Hom}(K,\overline{\mathbb{Q}}_p)$; here $\mathrm{Hom}$ denotes the set of $\Q$-algebra homomorphisms.

\begin{defn}\label{def:fs}
Let $X$ be a reduced rigid analytic space over $F$. A \emph{finite slope family} of $p$-adic representations of dimension $d$ over $X$ is a locally free coherent $\OO_X$-module $V_X$ of rank $d$ equipped with a continuous $G_K$-action and together with the following data
\begin{enumerate}
\item[(1)]positive integers $b, c$,
\item[(2)]a monic polynomial $Q(T)\in\OO_X(X)[T]$ of degree $m$ with unit constant term,
\item[(3)]a subset $Z$ of $X$ such that for all $z$ in $Z$, $V_z$ is semi-stable with non-positive Hodge-Tate weights, and for all $B\in\mathbb{Z}$ the set of $z$ in $Z$ such
that $V_z$ has $d-c$ Hodge-Tate weights less than $B$ is Zariski dense in $X$,
 \item[(4)]for $z\in Z$, a $K_0\otimes_{\Q}k(z)$-direct summand $\mathcal{F}_{z}$ of $D^+_{\mathrm{st}}(V_z)$ which is free of rank $c$ and stable under $\varphi$ and $N$ such that $\varphi^f$ has characteristic polynomial $Q(z)(T)$ and all Hodge-Tate weights of $\mathcal{F}_z$ lie in $[-b,0]$.
\end{enumerate}

We also need to extend the functors $D_{\mathrm{crys}}^+$ and $D_{\mathrm{st}}^+$ to families of $p$-adic representations over rigid analytic spaces.

\begin{defn}\label{def:crys-st-family}
Let $X$ be a rigid analytic space over $\Q$, and let $V_X$ be locally free coherent $\OO_X$-module equipped with a continuous $G_K$-action. Define $D_{\mathrm{crys}}^+(V_X)$ and $D_{\mathrm{st}}^+(V_X)$ to be the presheaves
\[
M(S)\mapsto D_{\mathrm{crys}}^+(V_S)=(V_S\widehat{\otimes}_{\Q}\mathbf{B}_{\mathrm{crys}}^+)^{G_K}
\]
and
\[
M(S)\mapsto D_{\mathrm{st}}^+(V_S)=(V_S\widehat{\otimes}_{\Q}\mathbf{B}_{\mathrm{st}}^+)^{G_K},
\]
where $M(S)$ runs through all admissible affinoid subdomain of $X$, respectively; here $V_S$ is the restriction of $V_X$ on $M(S)$.
\end{defn}

Now we can state our main result precisely. 
\begin{theorem}\label{thm:main}
Let $V_X$ be a finite slope family over $X$. Then there exists a surjective proper morphism $X'\ra X$ so that $(K\otimes_{K_0}D^+_{\mathrm{st}}(V_{X'}))^{Q(\varphi^f)=0}$ has a rank $c$ locally free coherent $K_0\otimes_{\Q}\OO_{X'}$-submodule which specializes to a rank $c$ free $K_0\otimes_{\Q}k(x)$-submodule in $D_\mathrm{st}^+(V_x)$ for any $x\in X'$. As a consequence, $D^+_{\mathrm{st}}(V_x)^{Q(x)(\varphi^f)=0}$ has a free $K_0\otimes_{\Q}k(x)$-submodule of rank $c$ for any $x\in X$.
\end{theorem}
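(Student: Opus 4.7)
The plan is to construct the rank $c$ semi-stable sub-family in three stages that mirror the strategy outlined in the introduction: cut out the object at the level of $\m$-modules over the relative Robba ring, establish de Rham with uniformly bounded Hodge-Tate weights in the family, and finally upgrade from de Rham to semi-stable via an analogue of Berger-Colmez. At the outset, I pass from $V_X$ to the associated family of $\m$-modules $\D_\rig^\dag(V_X)$ over the relative Robba ring. The datum $(c, Q(T), Z, \{\F_z\})$ specifies a candidate rank $c$ sub-$\m$-module, but a priori only fiberwise over $z \in Z$. Adapting the slope-theoretic machinery of Kedlaya-Pottharst-Xiao, I would produce a proper surjective morphism $X' \to X$ together with a globally defined $\m$-stable coherent direct summand $\D'_{X'} \subset \D_\rig^\dag(V_{X'})$ of rank $c$, cut out by $Q(T)$; the base change is necessary because the finite-slope factorization may fail to exist globally on $X$, and properness together with admissible blow-ups supplies the gluing flexibility. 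Zariski density of $Z$ and the prescription of $\varphi^f$-eigenvalues by $Q$ ensure that $\D'_{X'}$ specializes at each $z \in Z$ to the saturated $\m$-submodule of $\D_\rig^\dag(V_z)$ generated by $\F_z$.

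Using the uniform bound $[-b,0]$ on the Hodge-Tate weights of the $\F_z$ together with the density condition in (3) of Definition~\ref{def:fs}, I would then promote $\D'_{X'}$ to a family of de Rham $\m$-modules with Hodge-Tate weights in $[-b, 0]$. The key intermediate result, to be developed in the body of the paper, is a family version of the following statement: a family of $\m$-modules over a reduced rigid space whose fibers are semi-stable on a Zariski dense subset, and whose Hodge-Tate weights on that subset are uniformly bounded, is itself a family of de Rham $\m$-modules with the same Hodge-Tate bound. This generalizes the classical results on the de Rham and Hodge-Tate loci in families by allowing multiple but uniformly bounded Hodge-Tate weights.

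Finally, to produce the rank $c$ submodule inside $(K \otimes_{K_0} D^+_{\mathrm{st}}(V_{X'}))^{Q(\varphi) = 0}$, I would prove an analogue of the Berger-Colmez analytic continuation theorem in the present setting. Granted that $\D'_{X'}$ is de Rham with bounded Hodge-Tate weights and with $\varphi^f$-characteristic polynomial $Q$, the semi-stable periods attached to $\{\F_z\}$ for $z \in Z$ should interpolate to a $K_0 \otimes_{\Q} \OO_{X'}$-locally free coherent submodule of rank $c$ in $(K \otimes_{K_0} D^+_{\mathrm{st}}(V_{X'}))^{Q(\varphi) = 0}$ whose specialization at any $x' \in X'$ recovers a rank $c$ free $K_0 \otimes_{\Q} k(x')$-submodule inside $\D_\rig^\dag(V_{x'})$. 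The second statement of the theorem then follows from surjectivity of $X' \to X$ together with the insensitivity of semi-stability to finite residue-field extensions. The main obstacle, I expect, will be the middle step: building a robust theory of families of de Rham $\m$-modules with bounded Hodge-Tate weights that is stable under Zariski density of well-behaved fibers is precisely what distinguishes our situation from Kisin's, where a single constant Hodge-Tate weight trivialized this step.
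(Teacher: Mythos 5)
Your three-stage outline (cut out a sub-family of $\m$-modules, show it is de Rham, upgrade to semi-stable via a Berger--Colmez analogue) does match the paper's plan, but that plan is already stated in the introduction; the genuine content of the proof lies in the mechanisms you sketch, and there are real gaps in two of the three stages. In Stage~1, the sub-family is neither a direct summand nor produced by ``slope-theoretic machinery of KPX'' --- there is no slope filtration theorem for families of $\m$-modules over an affinoid base, and the submodule one eventually obtains is a saturated sub-$\m$-module, not a complement. What the paper actually does is: after a finite surjective base change, factor $Q(T)=\prod_i(T-F_i)$, order the $F_i$ so that $v_p(F_i(z))$ is non-decreasing on $Z$, set $\F_{i,z}=D^+_{\mathrm{st}}(V_z)^{(\varphi^f-F_1(z))\cdots(\varphi^f-F_i(z))=0}$, and use the slope ordering plus $N\varphi=p\varphi N$ to see that $\F_{i,z}\subseteq\F_z$ and that $N=0$ on each graded piece. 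It then builds a decreasing chain of sub-families $D^{(i)}_{X'}$ by iterating Proposition~\ref{prop:cohomology}, whose input is that $H^0(D^{(i-1),\vee}_z(\delta_i))$ has constant rank $c_i$ on a Zariski dense set; the family $P_X$ is the kernel of the dual of $D^{(m)}_X\to D_X$ after a flattening blow-up (Lemma~\ref{lem:ker-birational}). This cohomological triangulation, with the slope ordering of the roots, is where most of the work is, and it is invisible in your sketch.

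In Stage~2 your proposed intermediate lemma --- de Rham fibers on a Zariski dense set with uniformly bounded Hodge--Tate weights imply a de Rham family --- is missing a hypothesis that the paper has to supply separately: you also need a uniform bound on $h_{dR}(D_z)$, i.e.\ on the level $n$ at which the isomorphism (\ref{eq:def-de Rham}) holds, not merely on the weights. Proposition~\ref{prop:dR-family} requires $\sup_{z\in Z} h_{dR}(D_z)<\infty$, and this uniformity is obtained by a Tate--Sen argument (the unnumbered lemma producing $m(V_S)$) that exploits the fact that $P_X$ is a sub-family of $\D^\dag_\rig(V_X)$ for an honest $S$-linear representation $V_S$; without this input the dense-fiber argument does not close. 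Finally, you should also record the last step: once $D_{\mathrm{dR}}(P_X)\subset(\bplus_{\mathrm{st}}\widehat{\otimes}V_X)\otimes_{K_0}K$ is established by the Berger--Colmez monodromy argument, one still has to check $Q(\varphi)(D_{\mathrm{dR}}(P_X))=0$, which again uses Zariski density of $Z$.
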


The following result follows immediately. 

\begin{cor}
Let $V_X$ be a finite slope family over $X$. If $V_z$ is crystalline for any $z\in Z$, then there exists a surjective proper morphism $X'\ra X$ so that $(K\otimes_{K_0}D^+_{\mathrm{crys}}(V_{X'}))^{Q(\varphi^f)=0}$ has a rank $c$ locally free coherent $K_0\otimes_{\Q}\OO_{X'}$-submodule which specializes to a rank $c$ free $K_0\otimes_{\Q}k(x)$-submodule in $D_{\mathrm{crys}}^+(V_x)$ for every $x\in X'$. As a consequence, $D^+_{\mathrm{crys}}(V_x)^{Q(x)(\varphi^f)=0}$ has a free $K_0\otimes_{\Q}k(x)$-submodule of rank $c$ for every $x\in X$.
\end{cor}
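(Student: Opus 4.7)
The plan is to deduce the corollary directly from Theorem \ref{thm:main}. Applying that theorem yields a proper surjection $\pi\colon X' \to X$ together with a rank $c$ locally free coherent $K_0 \otimes_{\Q} \OO_{X'}$-submodule $\mathcal{M} \subseteq (K \otimes_{K_0} D^+_{\st}(V_{X'}))^{Q(\varphi) = 0}$ having the prescribed specialization property. The only additional content to be verified is that, under the extra hypothesis that $V_z$ is crystalline for every $z \in Z$, the submodule $\mathcal{M}$ is already contained in $K \otimes_{K_0} D^+_{\cris}(V_{X'})$, equivalently that the monodromy operator $N$ vanishes on $\mathcal{M}$. Once that is shown, $\mathcal{M}$ itself plays the role of the required submodule of $(K \otimes_{K_0} D^+_{\cris}(V_{X'}))^{Q(\varphi) = 0}$, and the last assertion of the corollary follows at any $x \in X$ by choosing a preimage $x' \in \pi^{-1}(x)$ (using surjectivity of $\pi$) and descending the rank $c$ free $K_0 \otimes_{\Q} k(x')$-submodule along the finite extension $k(x) \subseteq k(x')$ through a dimension count.

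The central step is to view $N$ as an $\OO_{X'}$-linear operator on $K \otimes_{K_0} D^+_{\st}(V_{X'})$ and to consider the induced morphism of coherent $K_0 \otimes_{\Q} \OO_{X'}$-modules
\[
N|_{\mathcal{M}}\colon \mathcal{M} \longrightarrow K \otimes_{K_0} D^+_{\st}(V_{X'}).
\]
Its fiberwise vanishing locus is a Zariski closed subset of $X'$, so it suffices to verify fiberwise vanishing on a Zariski dense subset. The natural candidate is $\pi^{-1}(Z)$: for any $x \in X'$ with $\pi(x) = z \in Z$, the fiber $V_z = V_x$ is crystalline by hypothesis, hence $N$ acts as zero on $D^+_{\st}(V_z) = D^+_{\cris}(V_z)$, and combined with the specialization compatibility of $\mathcal{M}$ at $x$ provided by Theorem \ref{thm:main} this forces $N|_{\mathcal{M}}$ to vanish at $x$.

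The main technical obstacle I anticipate is confirming that $\pi^{-1}(Z)$ is itself Zariski dense in $X'$, which does not follow formally from the properness and surjectivity of $\pi$ alone. I expect either to invoke the explicit construction of $X'$ from the proof of Theorem \ref{thm:main} (where the finite slope family structure should transfer to $X'$ in such a way that the dense crystalline locus of $X$ lifts to a dense subset of $X'$), or to argue by irreducible components: by properness, the Zariski closure of $\pi^{-1}(Z)$ in $X'$ maps onto a closed analytic subset of $X$ containing $Z$ and hence equals $X$, after which a component-by-component comparison using surjectivity and the density of $Z$ in each component of $X$ yields density of $\pi^{-1}(Z)$ in each component of $X'$. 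Once density is secured, the closed vanishing locus of $N|_{\mathcal{M}}$ coincides with $X'$, giving $N \equiv 0$ on $\mathcal{M}$ and completing the proof.
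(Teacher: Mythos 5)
Your proposal captures the intended argument; the paper itself dismisses the corollary as ``clear,'' and what you write is exactly the obvious route: apply Theorem \ref{thm:main} to obtain the rank-$c$ submodule $\mathcal{M}$ inside $(K\otimes_{K_0}D^+_{\mathrm{st}}(V_{X'}))^{Q(\varphi)=0}$, and then propagate the vanishing of the monodromy operator $N$ from the crystalline points to all of $X'$ by Zariski density. Two small remarks are worth making. First, the density of $\pi^{-1}(Z)$ in $X'$, which you rightly flag, is also implicitly assumed in the paper's proof of Theorem \ref{thm:main} (it is asserted there that $V_{X'}$ is again a finite slope family with Zariski dense crystalline set $\pi^{-1}(Z)$ after the various birational and finite modifications of $X$); your sketch via properness and the irreducible-component decomposition is the right way to close that gap. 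Second, your assertion that the fiberwise vanishing locus of $N|_{\mathcal{M}}$ is Zariski closed requires the target to be coherent, and $K\otimes_{K_0}D^+_{\mathrm{st}}(V_{X'})$ is not manifestly so. The fix, implicit in the paper's parallel treatment of $Q(\varphi)$ in the final paragraph of the proof of Theorem \ref{thm:main}, is that $\mathcal{M}$ is realized as $D_{\mathrm{dR}}(P_{X'})$, a locally free coherent sheaf on which the monodromy operator inherited from $\mathbf{B}^{\dagger}_{\log,K}$ through Proposition \ref{prop:monodromy} and Lemma \ref{lem:monodromy} acts $\OO_{X'}$-linearly; the vanishing of an endomorphism of a coherent sheaf is then a closed condition, and density of the crystalline locus finishes the argument exactly as you describe.
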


Since the families of $p$-adic representations arising from the Coleman-Mazur eigencurve are special cases of finite slope families, Theorem \ref{thm:main} generalizes the famous result of Kisin on the analytic continuation of crystalline periods over the eigencurve \cite{Ki03}. However, even in the case that the prescribed periods are crystalline, our method is completely different from his. In fact, in the work of Kisin as well as the recent enhancement made by us \cite{L12}, one crucially uses the fact that the families of $p$-adic representations arising from the eigencurve have only one constant Hodge-Tate weight. On the other hand, our strategy and techniques are largely inspired by the works of Berger and Colmez on families of de Rham representations with bounded Hodge-Tate weights \cite{BC07} and Kedlaya, Pottharst and Xiao on the cohomology of families of $\m$-modules \cite{KPX}.  In fact, for a finite slope family, we first construct a sub-family of $\m$-modules interpolating the prescribed semi-stable periods, after making a proper and surjective base change.  This is achieved by adapting some techniques of \cite{KPX}.  This sub-family of $\m$-modules is expected to be semi-stable and produce the desired semi-stable periods. However, we are unable to prove this directly due to some technical obstacle. Instead,  we first show that this sub-family of $\m$-modules is de Rham using the fact that it is de Rham at a Zariski dense subset of the base. To this end,  we develop a theory of families of Hodge-Tate and de Rham $\m$-modules with bounded Hodge-Tate weights, which generalizes the theory of families of Hodge-Tate and de Rham representations with bounded Hodge-Tate weights developed in \cite{BC07}.  Then we prove the $p$-adic local monodromy for the restrictions of families of de Rham $\m$-modules with bounded Hodge-Tate weights on their Shilov boundaries by mimicing the proof for families of de Rham representations with bounded Hodge-Tate weights given in [\emph{loc. cit.}].  This implies that the de Rham periods of this sub-family of $\m$-modules become potentially semi-stable after restricting on the Shilov boundary. Finally, we use a key lemma due to Berger and Colmez [\emph{loc. cit.}] to conclude that these de Rham periods are actually semi-stable. 


\end{defn}

\section*{Notations}
We choose a compatible sequence of primitive $p$-powers roots of unity $(\varepsilon_n)_{n\geq0}$, i.e. each $\varepsilon_n\in\overline{\mathbb{Q}}_p$ is a primitive $p^n$-th root of $1$, and they satisfy $\varepsilon^p_{n+1}=\varepsilon_{n}$ for all $n\geq0$. Fix $\varepsilon=(\varepsilon_0,\varepsilon_1,\dots)$, and let $t=\log [\varepsilon]$ be Fontaine's $p$-adic $2\pi i$.  For a finite extension $L$ of $\Q$ in $\mathbb{C}_p$, let $L_n=L(\varepsilon_n)$ for $n\geq 1$, and let $L_\infty=\cup_{n\in\mathbb{N}}L_n$. Let $L_0'$ be the maximal unramified extension of $\Q$ in $L_\infty$. Let $\Gamma_L=\mathrm{Gal}(L_{\infty}/L)$ and $\Gamma_{L_n}=\mathrm{Gal}(L_{\infty}/L_n)$ for $n\geq 1$. For simplicity, denote $\Gamma_K$ and $\Gamma_{K_n}$ by $\Gamma$ and $\Gamma_n$ respectively. Let $\chi$ denote the $p$-adic cyclotomic character. For a $p$-adic representation $V$ of $G_K$ and $n\in\mathbb{Z}$, we set $V(n)=V\otimes \chi^n$. For $n\geq0$, let $r_n=p^{n-1}(p-1)$. For $s>0$, let $n(s)$ be the maximal integer $n$ such that $r_n\leq s$. 

\section*{Acknowledgements}
Thanks to Christopher Skinner, Richard Taylor and Ila Varma for useful communications. We especially thank Richard Taylor for suggesting a more concise definition of finite slope families.

\section{Families of $\m$-modules}
In the section we recall the notion of families of $\m$-modules over rigid analytic spaces.  For the  period rings involved in this paper, we follow the notations introduced in \cite{LB02}, and we refer the reader to [\emph{loc. cit.}] for their precise definitions. Note that this is different from the ``Robba ring" type notations used in \cite{KPX}. 
A good dictionary for these two types of notations is given in \cite[\S1]{LB08}. 
\begin{defn}
Let $A$ be a Banach algebra over $\Q$. For $s>0$, a \emph{$\varphi$-module} over $\mathbf{B}^{\dag,s}_{\rig,K}\widehat{\otimes}_{\Q}A$ is a finite projective $\mathbf{B}^{\dag,s}_{\rig,K}\widehat{\otimes}_{\Q}A$-module $D_A^s$ equipped with an isomorphism
$$\varphi^*D_A^s\cong D_A^s\otimes_{\mathbf{B}^{\dag,s}_{\rig,K}\widehat{\otimes}_{\Q}A}\mathbf{B}^{\dag,ps}_{\rig,K}\widehat{\otimes}_{\Q}A.$$ A \emph{$\varphi$-module} $D_A$ over $\mathbf{B}^{\dag}_{\rig,K}\widehat{\otimes}_{\Q}A$ is the base change to $\mathbf{B}^{\dag}_{\rig,K}\widehat{\otimes}_{\Q}A$ of a $\varphi$-module $D_A^s$ over $\mathbf{B}^{\dag,s}_{\rig,K}\widehat{\otimes}_{\Q}A$ for some $s>0$. 
A \emph{$\m$-module} over $\mathbf{B}^{\dag,s}_{\rig,K}\widehat{\otimes}_{\Q}A$ is a $\varphi$-module $D_A^s$ over $\mathbf{B}^{\dag,s}_{\rig,K}\widehat{\otimes}_{\Q}A$ equipped with a commuting $\mathbf{B}^{\dag,s}_{\rig,K}$-semilinear and $A$-linear continuous action of $\Gamma$.  A \emph{$\m$-module} $D_A$ over $\mathbf{B}^{\dag}_{\rig,K}\widehat{\otimes}_{\Q}A$ is the base change to $\mathbf{B}^{\dag}_{\rig,K}\widehat{\otimes}_{\Q}A$ of a $\m$-module $D_A^s$ over $\mathbf{B}^{\dag,s}_{\rig,K}\widehat{\otimes}_{\Q}A$ for some $s>0$.
\end{defn}

\begin{notation}
For a morphism $A\ra B$ of Banach algebras over $\Q$, we denote by $D^s_B$ (resp. $D_B$) the base change of $D^s_A$ (resp. $D_A$) to $\mathbf{B}^{\dag,s}_{\rig,K}\widehat{\otimes}_{\Q}B$ (resp. $\mathbf{B}^{\dag}_{\rig,K}\widehat{\otimes}_{\Q}B$). In the case when $A=S$ is an affinoid algebra over $\Q$ and $x\in M(S)$, we denote $D^s_{k(x)}$ (resp. $D_{k(x)}$) by $D_x^s$ (resp. $D_x$) instead.
\end{notation}

To define $\m$-modules over general rigid analytic spaces, one needs to show that $\varphi$-modules over affinoid spaces satisfy the gluing property.  To this end, we recall the notion of $\varphi$-bundles introduced in \cite{KPX}. Let $S$ be an affinoid algebra over $\Q$. For $0<s_1<s_2$ which are sufficiently large, a vector bundle over $\mathbf{B}^{[s_1, s_2]}_{K}\widehat{\otimes}_{\Q}S$ a finite projective module $D_S^{[s_1,s_2]}$ over $\mathbf{B}^{[s_1,s_2]}_K\widehat{\otimes}_{\Q}S$. By the identification of $\mathbf{B}_K^{[s_1,s_2]}$ with the ring of rigid analytic functions over the closed annulus $s_1\leq v_p(T)\leq s_2$ over $K_0'$, one may identify $D_S^{[s_1,s_2]}$ with a locally free coherent sheaf over the product of the annulus $s_1\leq v_p(T)\leq s_2$ over $K'_0$ with $M(S)$ in the category of rigid analytic spaces over $\Q$. It then follows that vector bundles over $\mathbf{B}^{[s_1, s_2]}_{K}\widehat{\otimes}_{\Q}S$ satisfy the glueing property for the weak $G$-topology of $M(S)$. For sufficiently large $s$, a vector bundle over $\mathbf{B}^{\dag,s}_{\rig,K}\widehat{\otimes}_{\Q}S$ consists of one vector bundle $D_S^{[s_1,s_2]}$ over each ring $\mathbf{B}^{[s_1,s_2]}_K\widehat{\otimes}_{\Q}S$ with $s\leq s_1\leq s_2$, together with isomorphisms
\[
D_S^{[s_1,s_2]}\otimes_{\mathbf{B}^{[s_1,s_2]}_{K}\widehat{\otimes}_{\Q}S}
\mathbf{B}^{[s_1',s_2']}_{K}\widehat{\otimes}_{\Q}S\cong D_S^{[s'_1,s'_2]}
\]
for all $s\leq s_1'\leq s_1\leq s_2\leq s_2'$ satisfying the cocycle conditions. A $\varphi$-bundle over $\mathbf{B}^{\dag,s}_{\rig,K}\widehat{\otimes}_{\Q}S$ is a vector bundle $(D_S^{[s_1,s_2]})_{s\leq s_1\leq s_2}$ over $\mathbf{B}^{\dag,s}_{\rig,K}\widehat{\otimes}_{\Q}S$ equipped with isomorphisms $\varphi^*D_S^{[s_1,s_2]}\cong D_S^{[ps_1,ps_2]}$ for all $s\leq s_1\leq s_2$ satisfying the obvious compatibility conditions. When $s$ is sufficiently large, by \cite[Proposition 2.2.7]{KPX}, the natural functor from the category of $\varphi$-modules over $\mathbf{B}^{\dag,s}_{\rig,K}\widehat{\otimes}_{\Q}S$ to the category of $\varphi$-bundles over $\mathbf{B}^{\dag,s}_{\rig,K}\widehat{\otimes}_{\Q}S$ is an equivalence of categories. Note that by the glueing property of vector bundles, one can glue $\varphi$-bundles $\mathbf{B}^{\dag,s}_{\rig,K}\widehat{\otimes}_{\Q}S$ over $M(S)$. Therefore this equivalence of categories enables us to glue $\varphi$-modules over affinoid spaces.

\begin{defn}
Let $X$ be a rigid analytic space over $\Q$. A family of $\m$-modules $D_X$ over $X$ is a compatible family of $\m$-modules $D_S$ over $\mathbf{B}^{\dag}_{\rig,K}\widehat{\otimes}_{\Q}S$ for each affinoid subdomain $M(S)$ of $X$. By the gluing property of $\varphi$-modules over affinoid spaces, one may view $D_X$ as a sheaf over $X$ for the weak $G$-topology (hence extends uniquely to the strong $G$-topology).
\end{defn}

\begin{theorem}
Let $A$ be a Banach algebra over $\Q$, and let $V_A$ be a finite locally free $A$-linear representation of $G_K$. Then there is a $\m$-module $\D_\rig^\dag(V_A)$ over $\mathbf{B}^{\dag}_{\rig,K}\widehat{\otimes}_{\Q}A$ functorially associated to $V_A$. The rule $V_A\mapsto \D_\rig^\dag(V_A)$ is fully faithful and exact, and it commutes with base change in $A$.
\end{theorem}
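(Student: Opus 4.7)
The plan is to assemble the statement from the three cited references, each of which supplies a distinct ingredient. I would proceed in three stages.

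\emph{Construction.} For $A$ affinoid over $\Q$, I would follow Berger--Colmez \cite{BC07}. Form the $H_K$-invariants
\[
\widetilde{\D}^{\dag}(V_A)=\bigl(V_A\otimes_A(\widetilde{\mathbf{B}}^{\dag}\widehat{\otimes}_{\Q}A)\bigr)^{H_K}
\]
as a module over $\widetilde{\mathbf{B}}^{\dag}_K\widehat{\otimes}_{\Q}A$ equipped with compatible $\varphi$ and $\Gamma$ actions, and use a Tate--Sen descent, whose estimates one must arrange to be uniform over $M(A)$, to show that for $s\gg 0$ it descends to a finite projective module over $\widetilde{\mathbf{B}}^{\dag,s}_K\widehat{\otimes}_{\Q}A$ of the correct rank. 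A second, overconvergent descent then produces a finite projective $\m$-module over $\mathbf{B}^{\dag,s}_K\widehat{\otimes}_{\Q}A$, and base change defines $\D_{\rig}^{\dag}(V_A)$ over $\mathbf{B}^{\dag}_{\rig,K}\widehat{\otimes}_{\Q}A$. For a general Banach $A$, I would invoke the extensions in \cite{KL10} and \cite{L12}: the former provides the slope-theoretic apparatus needed to guarantee projectivity over non-affinoid bases, the latter upgrades the Berger--Colmez descent to arbitrary Banach algebras.

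\emph{Functoriality, base change, and exactness.} Functoriality is immediate from the construction. For a continuous morphism $A\ra B$, compatibility with base change follows from the identity $(V_A\otimes_A B)\otimes_B(\widetilde{\mathbf{B}}^{\dag}\widehat{\otimes}_{\Q}B)=(V_A\otimes_A(\widetilde{\mathbf{B}}^{\dag}\widehat{\otimes}_{\Q}A))\otimes_A B$ together with the commutation of the $H_K$-invariants functor with base change, which uses precisely that $\widetilde{\D}^{\dag}(V_A)$ is finite projective. Exactness then reduces to the classical pointwise statement at residue fields and is propagated over $M(A)$ by flatness.

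\emph{Full faithfulness.} The recovery formula
\[
V_A=\bigl(\D_{\rig}^{\dag}(V_A)\otimes_{\mathbf{B}^{\dag}_{\rig,K}\widehat{\otimes}_{\Q}A}(\widetilde{\mathbf{B}}^{\dag}_{\rig}\widehat{\otimes}_{\Q}A)\bigr)^{\varphi=1,\,H_K}
\]
is obtained by combining Cherbonnier--Colmez at each residue field with the slope-zero descent of \cite{KL10}; full faithfulness then follows formally.

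The main obstacle is the first step: securing a genuinely finite projective (rather than merely coadmissible or pseudo-coherent) $\m$-module over a general Banach base. For affinoid $A$ this is handled by the Tate--Sen formalism of \cite{BC07} with uniform estimates; for general Banach $A$ this is the technical heart of \cite{L12} and is precisely why \cite{L12} must be cited in addition to \cite{BC07}.
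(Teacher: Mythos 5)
The paper offers no proof of this theorem; it simply records that the result follows from \cite{BC07}, \cite{KL10}, and \cite{L12}. Your task was therefore to reconstruct how those references combine, and your outline has the right overall shape: a Tate--Sen descent carried out uniformly over $M(A)$ to produce the module, a projectivity statement to upgrade it to a genuine $\m$-module over $\mathbf{B}^\dag_{\rig,K}\widehat{\otimes}_{\Q}A$, and a recovery formula to deduce full faithfulness. That is indeed how the result is assembled in the literature, and you correctly isolate finite projectivity as the technical crux.

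Two points should be tightened. First, your recovery formula carries an extraneous $H_K$-invariance. Once $\D^\dag_\rig(V_A)$ has been tensored over $\mathbf{B}^\dag_{\rig,K}\widehat{\otimes}_{\Q}A$ with $\widetilde{\mathbf{B}}^\dag_\rig\widehat{\otimes}_{\Q}A$, the result is $G_K$-equivariantly identified with $V_A\otimes_A(\widetilde{\mathbf{B}}^\dag_\rig\widehat{\otimes}_{\Q}A)$, and one recovers $V_A$ (with its full $G_K$-action) simply as the $\varphi=1$ part; imposing $H_K$-invariance in addition would cut $V_A$ down to its $H_K$-fixed vectors, which is wrong in general. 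Second, the roles you assign to \cite{KL10} and \cite{L12} are shuffled. The slope theory of \cite{KL10} is not what handles non-affinoid bases: that paper works over affinoid (and rigid analytic) bases, and its contribution here is to upgrade the ``locally free near each point'' output of \cite{BC07} to a genuine finite projective $\m$-module, with base change and full faithfulness in that setting. The passage from affinoid algebras to arbitrary Banach bases --- which is what the stated theorem actually requires --- is the contribution of \cite{L12}. These are attribution slips and a formula slip rather than logical gaps; the skeleton of the argument you propose is sound.
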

\begin{proof}
See \cite[Theorem 3.11]{KL10}, which generalizes \cite[Th\'eor\`eme 4.2.9]{BC07}. Note that both the results do not really verify the $\varphi$-module condition. This gap is fixed by \cite[Theorem 1.1.4]{L12}.
\end{proof}

Let $A$ be a Banach algebra over $K_0$. Recall that one has a canonical decomposition
\[
A\otimes_{\Q}K_0\cong\prod_{\sigma\in\mathrm{Gal}(K_0/\Q)}A_{\sigma}
\]
where each $A_{\sigma}$ is the base change of $A$ by the automorphism $\sigma$. Furthermore, the $\mathrm{Gal}(K_0/\Q)$-action permutes all $A_\sigma$'s in the way that $\tau(A_\sigma)=A_{\tau\sigma}$. For any $a\in A^\times$, we equip $A\otimes_{\Q}{K_0}$ with a $1\otimes \varphi$-semilinear action $\varphi$ by setting
\[
\varphi((x_1,x_{\varphi},\dots, x_{\varphi^{f-1}}))=(ax_{\varphi^{f-1}},x_1,\dots,x_{\varphi^{f-2}})
\]
where $\varphi$ is the geometric Frobenius and $x_{\varphi^i}\in A_{\varphi^i}$ for each $0\leq i\leq f-1$; we denote this $\varphi$-module by $D_a$. It is clear that the $\varphi$-action on $D_a$ satisfies $\varphi^f=1\otimes a$.

We fix a uniformizer $\pi_K$ of $K$.
\begin{defn} For any continuous character $\delta:K^\times\ra A^\times$, we define a rank 1 $(\varphi,\Gamma)$-module $(\mathbf{B}_{\rig,K}^\dag\widehat{\otimes}_{\Q}A)(\delta)$ over $\mathbf{B}_{\rig,K}^\dag\widehat{\otimes}_{\Q}A$ as follows. If $\delta|_{\OO_K^\times}=1$, we set 
\[
(\mathbf{B}_{\rig,K}^\dag\widehat{\otimes}_{\Q}A)(\delta)=(\mathbf{B}_{\rig,K}^\dag\widehat{\otimes}_{\Q}A)
\otimes_{A\otimes_{\Q}{K_0}}D_{\delta(\pi_K)}
\] 
where we equip $D_{\delta(\pi_K)}$ with the trivial $\Gamma$-action. For general $\delta$, we write $\delta=\delta'\delta''$ such that $\delta'(\pi_K)=1$ and $\delta''|_{\OO_K^\times}=\mathrm{id}$. We view $\delta'$ as an $A$-valued character of $W_K$ via the local reciprocity map, and extend it to a character of $G_K$ continuously. We then set
\[
(\mathbf{B}_{\rig,K}^\dag\widehat{\otimes}_{\Q}A)(\delta)=\D_\rig^\dagger(\delta')
\otimes_{\mathbf{B}_{\rig,K}^\dag\widehat{\otimes}_{\Q}A}
(\mathbf{B}_{\rig,K}^\dag\widehat{\otimes}_{\Q}A)(\delta'').
\]
For any $(\varphi,\Gamma)$-module $D_A$ over $\mathbf{B}_{\rig,K}^\dag\widehat{\otimes}_{\Q}A$, put $D_A(\delta)=D_A\otimes_{\mathbf{B}_{\rig,K}^\dag\widehat{\otimes}_{\Q}A}
(\mathbf{B}_{\rig,K}^\dag\widehat{\otimes}_{\Q}A)(\delta)$.

Let $X$ be a rigid analytic space over $\Q$. For a continuous character $\delta:K^\times\ra \OO(X)^\times$ and a family of $\m$-module $D_X$ over $X$, we define the families of $\m$-modules $(\mathbf{B}_{\rig,K}^\dag\widehat{\otimes}_{\Q}\OO_X)(\delta)$ and $D_X(\delta)$ by gluing  $(\mathbf{B}_{\rig,K}^\dag\widehat{\otimes}_{\Q}S)(\delta)$ and $D_S(\delta)$ for all affinoid subdomains $M(S)$ respectively.
\end{defn}

\section{Cohomology of families of $\m$-modules}
Let $\Delta_K$ be the $p$-torsion subgroup of $\Gamma$. Choose $\gamma_K\in\Gamma_K$ whose image in $\Gamma/\Delta_K$ is a topological generator.
\begin{defn}
For a $\m$-module $D_S$ over $\mathbf{B}_{\rig,K}^\dag\widehat{\otimes}_{\Q}S$, we define the Herr complex $C^\bullet_{\varphi,\gamma_K}(D_S)$ of $D_S$ concentrated in degree $[0,2]$ as follows:
\[
  C^{\bullet}_{\varphi,\gamma_K}(D_S)=
 [D_S^{\Delta_K}\stackrel{d_{1}}{\longrightarrow}D_S^{\Delta_K}\oplus D_S^{\Delta_K}
 \stackrel{d_{2}}{\longrightarrow}D_S^{\Delta_K}]
\]
with $d_1(x) = ((\gamma_K - 1)x, (\varphi - 1)x)$ and $d_2(x,y) =
(\varphi - 1)x - (\gamma_K - 1)y$. One shows that this complex is independent of the choice of $\gamma_K$ up to canonical $S$-linear isomorphisms: the isomorphism $C^{\bullet}_{\varphi,\gamma_K}(D_S)\stackrel{\sim}{\to}C^{\bullet}_{\varphi,\gamma'_K}(D_S)$ is given by $[1, 1\oplus \frac{\gamma'_K-1}{\gamma_K-1}, \frac{\gamma'_K-1}{\gamma_K-1}]$. We denote the cohomology of $C^{\bullet}_{\varphi,\gamma_K}(D_S)$ by $H^\bullet(D_S)$.
\end{defn}

By the main result of \cite{KPX}, one knows that $H^i(D_S)$ is a finitely generated $S$-module.  It therefore follows that $H^i(D_S)$ commutes with flat base change in $S$. That is, if $S\ra S'$ is flat, then $H^i(D_S)\otimes_SS'\cong H^i(D_{S'})$. This enables a cohomology theory for families of $\m$-modules over general rigid analytic spaces.

\begin{defn}
Let $X$ be a rigid analytic space over $\Q$, and let $D_X$ be a family of $\m$-modules over $X$. We define $H^\bullet(D_X)$ to be the cohomology of the complex of sheaves
\[
C^{\bullet}_{\varphi,\gamma_K}(D_X)=
 [D_X^{\Delta_K}\stackrel{d_{1}}{\longrightarrow}D_X^{\Delta_K}\oplus D_X^{\Delta_K}
 \stackrel{d_{2}}{\longrightarrow}D_X^{\Delta_K}]
\]
in the category of presheaves over $X$ with $d_1(x) = ((\gamma_K - 1)x, (\varphi - 1)x)$ and $d_2(x,y) =
(\varphi - 1)x - (\gamma_K - 1)y$. For each affinoid subdomain $M(S)$ of $X$ and $0\leq i\leq 2$, the module of sections of $H^i(D_X)$ on $M(S)$ is canonically isomorphic to $H^i(D_S)$. Hence $H^i(D_X)$ forms a coherent $\OO_X$-module by the flat base change property of $H^i(D_S)$.
\end{defn}

As a consequence of finiteness of the cohomology of families of $\m$-modules, by a standard argument we see that locally on $X$, the complex $C^{\bullet}_{\varphi,\gamma_K}(D_X)$ is quasi isomorphic to a complex of locally free coherent sheaves concentrated in degree $[0,2]$. This enables us to flatten the cohomology of families of $\m$-modules by blowing up the base $X$. The following lemma is a rearrangement of some arguments in \cite[\S6.3]{KPX}.

\begin{lemma}\label{lem:modification}
Let $X$ be a reduced and irreducible rigid analytic space over $F$, and let $D_X$ be a family of $\m$-modules of rank $d$ over $X$. Then the following are true.
\begin{enumerate}
\item[(1)]There exists a proper birational morphism $\pi:X'\ra X$ of reduced rigid analytic spaces over $F$ so that  $H^0(D_{X'})$ is flat and $H^i(D_{X'})$ has Tor-dimensions $\leq 1$ for each $i=1,2$.
\item[(2)]Suppose that $D'_{X}$ is a family of $\m$-modules over $X$ of rank $d'$, and that $\lambda: D'_X\ra D_X$ be a morphism between them so that for any $x\in X$, the image of $\lambda_x$ is a $\m$-submodule of rank $d$ of $D_x$. Then there exists a proper birational morphism $\pi:X'\ra X$ of reduced rigid analytic spaces over $F$ so that  the cokernel of $\pi^*\lambda$ has Tor-dimension $\leq 1$.
\end{enumerate}
\end{lemma}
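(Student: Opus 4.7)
The plan is to exploit the observation recorded immediately before the lemma: locally on $X$, the Herr complex $C^\bullet_{\varphi,\gamma_K}(D_X)$ is quasi-isomorphic to a bounded complex $[P^0 \to P^1 \to P^2]$ of finite locally free $\OO_X$-modules. This reduces both parts of the lemma to a flatness question about coherent subquotients of locally free sheaves, which I would settle using the rigid-analytic flattening theorem (Raynaud--Gruson-style, as applied throughout \cite[\S6]{KPX}): for any coherent sheaf $\mathcal F$ on a reduced separated rigid space, there is a proper birational $\pi: X' \to X$ whose strict transform of $\mathcal F$ is locally free, and by iterating one can further arrange that $\pi^*\mathcal F$ itself has Tor-dimension $\leq 1$.

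For part (1), let $\mathrm{im}(d^0) \subseteq P^1$ and $\mathrm{im}(d^1) \subseteq P^2$ denote the images of the differentials in the local model. First apply flattening to $\mathrm{im}(d^0)$, producing $\pi_1 : X_1 \to X$ with $\pi_1^*\mathrm{im}(d^0)$ locally free. Then $\pi_1^* H^0 = \pi_1^*\ker(d^0)$ is the kernel of a surjection between locally free sheaves, hence locally free (flat), while $P^1/\mathrm{im}(d^0)$ is the cokernel of an injection between locally free sheaves and so has Tor-dimension $\leq 1$. A second flattening $\pi_2: X_2 \to X_1$ applied to $\mathrm{im}(d^1)$ then forces both $H^1 = \ker(d^1)/\mathrm{im}(d^0)$ and $H^2 = P^2/\mathrm{im}(d^1)$ to have Tor-dimension $\leq 1$; take $X' = X_2$ and $\pi = \pi_1\circ\pi_2$. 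For part (2), the cokernel $\mathrm{coker}(\lambda)$ is a coherent $\mathbf{B}^\dag_{\rig,K}\widehat{\otimes}_\Q\OO_X$-module whose specialization at every $x \in X$ is a torsion quotient, because the image has full rank $d$ at each point. Applying flattening directly to this cokernel produces the desired modification: afterwards the pulled-back cokernel sits in an extension between two locally free sheaves, which forces its Tor-dimension to be $\leq 1$.

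The main obstacle I expect is the globalization step. Both the perfect-complex replacement of the Herr complex and each flattening blow-up are a priori constructed on affinoid subdomains, while $X$ itself need not be quasi-compact. I would handle this exactly as in \cite[\S6]{KPX}, by leveraging the functorial (essentially universal) nature of the flattening blow-up: the modifications produced on overlapping affinoids coincide canonically on the intersection, so that Zariski gluing yields a single proper birational morphism $\pi : X' \to X$. Properness transfers from the local pieces, while birationality follows from irreducibility of $X$ together with the fact that flattening is an isomorphism over a dense Zariski-open locus where the relevant image or cokernel sheaf is already locally free.
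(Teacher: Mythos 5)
Your treatment of part (1) matches the paper's: replace the Herr complex locally by a perfect complex $[P^0 \to P^1 \to P^2]$ of locally free sheaves, flatten the images of the differentials via the Raynaud--Gruson-type result (\cite[Corollary~6.2.5]{KPX}), and then glue, using that the flattening blow-up is determined by the quasi-isomorphism class and is compatible with localization. That is exactly the argument in the paper.

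Part (2), however, has a genuine gap. You write that the cokernel $Q_X = \mathrm{coker}(\lambda)$ is a coherent $\mathbf{B}^\dag_{\rig,K}\widehat{\otimes}_\Q\OO_X$-module and then propose to ``apply flattening directly to this cokernel.'' But the flattening theorem you invoke (KPX, Cor.~6.2.5) is a statement about coherent $\OO_X$-modules --- i.e., finite modules over the affinoid algebra $S$, after localizing --- not about modules over the much larger ring $\mathbf{B}^\dag_{\rig,K}\widehat{\otimes}_\Q S$. The observation that $Q_x$ is $t$-power torsion at every point is the right starting point, but by itself it does not produce a coherent $\OO_X$-module to which flattening can be applied. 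The paper bridges this by: (i) passing to the truncation $Q_S^{[s,r]} = D_S^{[s,r]}/\lambda(D_S'^{[s,r]})$ over the finite-interval ring $\mathbf{B}^{[s,r]}_K\widehat{\otimes}_\Q S$; (ii) using the pointwise $t$-power-torsion property together with reducedness of $S$ to conclude that $Q_S^{[s,r]}$ is killed uniformly by some $t^k$, which makes it a \emph{finite $S$-module}; (iii) applying flattening to a finite presentation of $Q_S^{[s,ps]}$ as an $S$-module; and, crucially, (iv) invoking the $\varphi$-equivariance $(\varphi^n)^*Q_S^{[s,ps]} \cong Q_S^{[p^ns,p^{n+1}s]}$ to see that one and the same blow-up of $M(S)$ simultaneously flattens all $Q_S^{[s,r]}$, so that the conclusion passes to the colimit $Q_S$ over the full Robba ring. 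Steps (ii) and (iv) are what make the argument work and are absent from your sketch; without (iv) in particular, you would a priori obtain an infinite tower of blow-ups indexed by the radius $r$, with no single proper birational $X' \to X$ controlling all of them.
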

\begin{proof}
The upshot is that for a bounded complex $(C^\bullet,d^\bullet)$ of locally free coherent sheaves on $X$, there exists a blow up $\pi:X'\ra X$, which depends only on the quasi-isomorphism class of $(C^\bullet,d^\bullet)$, so that $\pi^*d^i$ has flat image for each $i$. Furthermore, the construction of $X'$ commutes with dominant base change in $X$ (see \cite[Corollary 6.3.6]{KPX} for more details). Thus for (1), we can construct $X'$ locally and then glue. For (2),
let $Q_X$ denote the cokernel of $\lambda$. For any $x\in X$, since the image of $\lambda_x$ is a $\m$-submodule of rank $d$, by \cite[Lemma 5.3.1]{L12}, we get that $Q_x$ is killed by a power of $t$. Now let $M(S)$ be an affinoid subdomain of $X$, and suppose that $D_S^s$ and $D'^s_S$ are defined for some suitable $s>0$. For $r>s$, set $Q_S^{[s,r]}=D^{[s,r]}_S/\lambda(D'^{[s,r]}_S)$. Since for any $x\in M(S)$, the fiber of $Q_S^{[s,r]}$ at $x$ is killed by a power of $t$, we get that $Q_S^{[s,r]}$ is killed by $t^k$ for some $k>0$. This yields that $Q_S^{[s,r]}$ is a finite $S$-module. Now we apply \cite[Corollary 6.3.6]{KPX} to a finite presentation of $Q_S^{[s,ps]}$ to get a blow up $Y$ of $M(S)$ so that the pullback of $Q_S^{[s,ps]}$ has Tor-dimension $\leq1$. Using the fact $(\varphi^n)^*Q_S^{[s,ps]}\cong Q_S^{[p^ns,p^{n+1}s]}$, we see that $Y$ is also the blow up obtained by applying \cite[Corollary 6.3.6]{KPX} to a finite presentation of $Q_S^{[s,p^{n+1}s]}$ for any positive integer $n$. It therefore follows that for any $r>s$, the pullback of $Q_S^{[s,r]}$ has Tor-dimension $\leq 1$; hence the pullback of $Q_S$ has Tor-dimension $\leq 1$. Furthermore, the blow ups for all affinoid subdomains $M(S)$ glue to form a blow up $X'$ of $X$ which satisfies the desired condition.
\end{proof}

\begin{lemma}\label{lem:ker-birational}
Let $X$ be a reduced and irreducible rigid analytic space over $F$. Let $D'_X$ and $D_{X}$ be families of $\m$-modules over $X$ of ranks $d'$ and $d$ respectively, and let $\lambda: D'_X\ra D_X$ be a morphism between them. Suppose that for any $x\in X$, the image of $\lambda_x$ is a $\m$-submodule of rank $d$ of $D_x$. Then there exists a proper birational morphism $\pi:X'\ra X$ of reduced rigid analytic spaces over $F$ such that the kernel of $\pi^*\lambda$ is a family of $\m$-modules of rank $d'-d$ over $X'$, and there exists a Zariski open dense subset $U\subset X'$ such that $(\ker(\pi^*\lambda))_x=\ker((\pi^*\lambda)_x)$ for any $x\in U$.
\end{lemma}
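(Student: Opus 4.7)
The plan is to take $\pi\colon X'\to X$ to be precisely the proper birational morphism furnished by Lemma~\ref{lem:modification}(2) applied to $\lambda$, and then to extract the kernel of $\pi^*\lambda$ explicitly by homological algebra. By the proof of that lemma, after replacing $X$ by $X'$ we may work affinoid-locally on $M(S)\subset X'$ and at each annulus scale $[s_1,s_2]$, and the cokernel $Q_S^{[s_1,s_2]}$ of $\pi^*\lambda$ is a finite $S$-module killed by some $t^k$ and of $S$-Tor-dimension $\leq 1$.

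Next I would exploit the Tor long exact sequences attached to the two short exact sequences of finite $S$-modules
\[
0\to \mathrm{Im}(\pi^*\lambda)^{[s_1,s_2]}\to D^{[s_1,s_2]}_S\to Q_S^{[s_1,s_2]}\to 0,
\]
\[
0\to \ker(\pi^*\lambda)^{[s_1,s_2]}\to {D'}^{[s_1,s_2]}_S\to \mathrm{Im}(\pi^*\lambda)^{[s_1,s_2]}\to 0.
\]
Because $D^{[s_1,s_2]}_S$ and ${D'}^{[s_1,s_2]}_S$ are finite projective over $\mathbf{B}^{[s_1,s_2]}_K\widehat\otimes_{\Q}S$, in particular $S$-flat, the Tor bound on $Q_S^{[s_1,s_2]}$ forces $\mathrm{Im}(\pi^*\lambda)^{[s_1,s_2]}$ to be $S$-flat, and the second sequence then forces $\ker(\pi^*\lambda)^{[s_1,s_2]}$ to be $S$-flat. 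Using this flatness, at each $x\in X'$ the fiber $\ker(\pi^*\lambda)^{[s_1,s_2]}\otimes_S k(x)$ equals $\ker({D'}^{[s_1,s_2]}_x\to \mathrm{Im}(\pi^*\lambda)^{[s_1,s_2]}\otimes_S k(x))$. The natural map $\mathrm{Im}(\pi^*\lambda)^{[s_1,s_2]}\otimes_S k(x)\to D^{[s_1,s_2]}_x$ has image $\mathrm{Im}(\lambda_x)^{[s_1,s_2]}$ (of generic rank $d$ by hypothesis) and kernel a quotient of $\mathrm{Tor}_1^S(Q,k(x))$, which is $t^k$-torsion since $Q$ is; hence both sides have generic rank $d$ as $\mathbf{B}^{[s_1,s_2]}_K\widehat\otimes_{\Q}k(x)$-modules, so the fiber of the kernel is torsion-free of generic rank $d'-d$, and therefore free of that rank over the Bezout-type ring $\mathbf{B}^{[s_1,s_2]}_K\widehat\otimes_{\Q}k(x)$. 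The combination of $S$-flatness with free constant-rank fibers, via the two-variable local criterion of flatness used in \cite{KPX} and \cite{L12}, makes $\ker(\pi^*\lambda)^{[s_1,s_2]}$ finite projective of rank $d'-d$ over $\mathbf{B}^{[s_1,s_2]}_K\widehat\otimes_{\Q}S$; together with the inherited $\varphi$- and $\Gamma$-actions and the usual gluing across annuli, this produces the desired family of $\m$-modules of rank $d'-d$.

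Finally, for the fiber identity on a Zariski open dense $U\subset X'$, I would take $U$ to be the $\OO_{X'}$-flat locus of $Q_{X'}$. It is Zariski open because $Q_{X'}$ has finite Tor-dimension, and it is Zariski dense because $X'$ is reduced and irreducible, so the fiber rank of a coherent sheaf attains its generic value on an open neighborhood of the generic point. Over $U$ we have $\mathrm{Tor}_1^S(Q,k(x))=0$, so $\mathrm{Im}(\pi^*\lambda)^{[s_1,s_2]}\otimes_S k(x)\hookrightarrow D^{[s_1,s_2]}_x$ is injective, and this yields $(\ker \pi^*\lambda)_x=\ker((\pi^*\lambda)_x)$. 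The main obstacle I anticipate is the upgrade, in the middle paragraph, from $S$-flatness plus fiberwise freeness of constant rank to honest finite projectivity as a $\m$-module; this is where the special structure of the annulus rings $\mathbf{B}^{[s_1,s_2]}_K$ and the two-variable flatness techniques of \cite{KPX} and \cite{L12} must do the heavy lifting, with everything else being bookkeeping with Tor.
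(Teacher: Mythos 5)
Your proof is correct and follows essentially the same approach as the paper: after invoking Lemma~\ref{lem:modification}(2) to control the Tor-dimension of the cokernel $Q$, one relates the fiber of $\ker(\pi^*\lambda)$ to $\ker((\pi^*\lambda)_x)$ via Tor exact sequences (the paper packages this as the hyper-Tor spectral sequence of the two-term complex, arriving at $0 \to P_x \to \ker(\lambda_x) \to \mathrm{Tor}_1(Q,k(x)) \to 0$), identifies the fibers as torsion-free of constant rank $d'-d$, and then cites \cite[Corollary 2.1.9]{KPX} for projectivity of the kernel as a family and \cite[Lemma 6.2.7]{KPX} for the open dense locus where $\mathrm{Tor}_1(Q,k(x))$ vanishes. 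You merely unpack these citations explicitly at the annulus level, but the underlying argument is the same.
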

\begin{proof}
Let $Q_X$ be the cokernel of $\lambda$. By the previous Lemma, we may suppose that $Q_X$ has Tor-dimension $\leq1$ after adapting $X$. Now let $P_X$ denote the kernel of $\lambda$. For any $x\in X$, the Tor spectral sequence computing the cohomology of the complex $[D_{X}\stackrel{\lambda}{\longrightarrow}D'_{X}]\otimes^{\mathbf{L}}_{\OO_{X}}k(x)$ gives rise to a short exact sequence
\[
0\longrightarrow P_x\longrightarrow\ker(\lambda_x)\longrightarrow\mathrm{Tor}_1(Q_X,k(x))\longrightarrow0.
\]
Since the image of $\lambda_x$ is a $\m$-module of rank $d$, $\ker(\lambda_x)$ is a $\m$-module of rank $d'-d$. Since $Q_X$ is killed by a power of $t$ locally on $X$, we get that the last term of the exact sequence is killed by a power of $t$. This yields that $P_x$ is a $\m$-module of rank $d'-d$. We therefore conclude that $P_X$ is a family of $\m$-modules of rank $d'-d$ over $X$ by \cite[Corollary 2.1.9]{KPX}. Furthermore, since $Q_X$ has Tor-dimension $\leq1$, by \cite[Lemma 6.3.7]{KPX}, we get that the set of $x\in X$ for which $\mathrm{Tor}_1(Q_X,k(x))\neq0$ forms a nonwhere dense Zariski closed subset of $X$; this yields the rest of the lemma.
\end{proof}

The following proposition modifies part of \cite[Theorem 6.3.9]{KPX}.

\begin{prop}\label{prop:cohomology}
Let $X$ be a reduced and irreducible rigid analytic space over $F$. Let $D_X$ be a family of $\m$-modules of rank $d$ over $X$, and let $\delta:K^\times\ra \OO(X)^\times$ be a continuous character. Suppose that there exists a Zariski dense subset $Z$ of closed points of $X$ and a positive integer $c\leq d$ such that for every $z\in Z$, $H^0(D_z^{\vee}(\delta_z))$ is a
$c$-dimensional $k(z)$-vector space.
Then there exists a proper birational morphism $\pi:X'\ra X$ of reduced rigid analytic spaces over $F$ and a morphism $\lambda: D_{X'}\ra M_{X'}=(\mathbf{B}_{\rig,K}^\dag\widehat{\otimes}_{\Q}\OO_{X'})(\delta)\otimes_{\OO_{X'}}L$ of $\m$-modules, where $L$ is a locally free coherent $\OO_{X'}$-module of rank $c$ equipped with trivial $\varphi,\Gamma$-actions, such that
\begin{enumerate}
\item[(1)]for any $x\in X'$, the image of $\lambda_{x}$ is a $\m$-submodule of rank $c$;
\item[(2)]the kernel of $\lambda$ is a family of $\m$-modules of rank $d-c$ over $X'$, and there exists a Zariski open dense subset $U\subset X'$ such that $(\ker\lambda)_x=\ker(\lambda_x)$ for any $x\in U$.
\end{enumerate}
\end{prop}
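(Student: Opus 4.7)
The plan is to obtain $\lambda$ as the adjunction of the natural evaluation pairing with the sheaf $L_0 := H^0(D_X^\vee(\delta))$, after two proper birational modifications of $X$.

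First, apply Lemma~\ref{lem:modification}(1) to the family $D_X^\vee(\delta)$, yielding a proper birational $\pi_0: X_0 \to X$ such that $L_0 := H^0(D_{X_0}^\vee(\delta))$ is flat (hence locally free, being coherent) and $H^i(D_{X_0}^\vee(\delta))$ has Tor-dimension $\leq 1$ for $i=1,2$. By \cite[Lemma 6.2.7]{KPX}, the locus on $X_0$ where $\mathrm{Tor}_1(H^1(D_{X_0}^\vee(\delta)), k(x))\neq 0$ is nowhere dense Zariski closed; hence the base change sequence
\[
0 \to L_0\otimes_{\OO_{X_0}} k(x) \to H^0(D_x^\vee(\delta_x)) \to \mathrm{Tor}_1(H^1(D_{X_0}^\vee(\delta)), k(x))
\]
becomes an isomorphism at the first arrow on a Zariski open dense subset of $X_0$. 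Since $\pi_0^{-1}(Z)$ is Zariski dense and meets this locus at a point where $\dim H^0(D_x^\vee(\delta_x)) = c$, the locally free sheaf $L_0$ has constant rank $c$ on the irreducible space $X_0$.

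Set $L := L_0^\vee$, a locally free $\OO_{X_0}$-module of rank $c$ with trivial $\varphi$- and $\Gamma$-actions. The tautological evaluation $L_0\otimes_{\OO_{X_0}} D_{X_0} \to (\mathbf{B}_{\rig,K}^\dag\widehat{\otimes}_{\Q}\OO_{X_0})(\delta)$ is a morphism of $\m$-modules, and adjunction yields
\[
\lambda_0: D_{X_0} \longrightarrow (\mathbf{B}_{\rig,K}^\dag\widehat{\otimes}_{\Q}\OO_{X_0})(\delta) \otimes_{\OO_{X_0}} L.
\]
The key technical step is to show that for every $x\in X_0$, the image of $(\lambda_0)_x$ is a $\m$-submodule of rank $c$. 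Injectivity of the base change sequence supplies a $c$-dimensional $k(x)$-subspace $L_0\otimes k(x) \hookrightarrow H^0(D_x^\vee(\delta_x))$, spanned by some $f_1,\ldots,f_c$, and $(\lambda_0)_x$ is the map $d\mapsto (f_1(d),\ldots,f_c(d))$; its image has rank $c$ precisely when the $f_i$ are linearly independent over $R_x := \mathbf{B}_{\rig,K}^\dag\widehat{\otimes}_{\Q}k(x)$. This independence is proved by a standard minimality argument: given a non-trivial $R_x$-linear relation $\sum r_i f_i = 0$ with the fewest non-zero terms, the combination $\varphi(r_1)\sum r_i f_i - r_1\sum \varphi(r_i) f_i = 0$ is a shorter relation, so by minimality each $r_i/r_1$ is $\varphi$-invariant; repeating with $\Gamma$ forces $r_i/r_1 \in \mathrm{Frac}(R_x)^{\m=1} = k(x)$, contradicting the $k(x)$-linear independence of the $f_i$.

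Once the fiberwise rank $c$ condition is in place, Lemma~\ref{lem:ker-birational} applied to $\lambda_0$ produces a proper birational morphism $\pi_1: X'\to X_0$ such that $\ker(\pi_1^*\lambda_0)$ is a family of $\m$-modules of rank $d-c$ on $X'$, together with a Zariski open dense $U\subset X'$ on which $(\ker\lambda)_x = \ker(\lambda_x)$. Taking $\pi := \pi_0\circ\pi_1$ and $\lambda := \pi_1^*\lambda_0$ completes the construction, with (1) holding by the key step above (and preserved under the flat pullback $\pi_1^*$ on fibers) and (2) being the direct output of Lemma~\ref{lem:ker-birational}. I expect the Robba-ring linear independence in the key technical step to be the main obstacle; once secured, the rest assembles from Lemmas~\ref{lem:modification} and~\ref{lem:ker-birational}.
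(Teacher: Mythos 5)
Your argument follows the paper's proof essentially step for step: apply Lemma~\ref{lem:modification}(1) to $N_X=D_X^\vee(\delta)$, use the base-change spectral sequence together with \cite[Lemma 6.2.7]{KPX} and the Zariski density of $\pi^{-1}(Z)$ to see that $H^0(N_{X'})$ is locally free of rank $c$, dualize to obtain $\lambda$, and conclude with Lemma~\ref{lem:ker-birational}. The only place you elaborate beyond the paper is the fiberwise rank-$c$ claim, which the paper deduces directly from injectivity of $H^0(N_{X'})\otimes k(x)\hookrightarrow H^0(N_x)$; your minimality-plus-twisting argument is the standard way to make that explicit, though you should note that $\mathbf{B}^\dag_{\rig,K}\widehat{\otimes}_{\Qp}k(x)$ is a finite product of Robba rings permuted cyclically by $\varphi$ rather than a single domain, so one should run the twist with $\gamma$ (or with a suitable power $\varphi^{f'}$) factor by factor before invoking triviality of the $\m$-invariants of the fraction field.
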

\begin{proof}
Using Lemma \ref{lem:modification}, we first choose a proper birational morphism $\pi:X'\ra X$ with $X'$ reduced such that $N_{X'}=\pi^*(D^{\vee}_{X}(\delta))$ satisfies the conditions that $H^0(N_{X'})$ is flat and $H^i(N_{X'})$ has Tor-dimension $\leq 1$ for each $i=1,2$. Then for any $x\in X'$, the base change spectral sequence $E^{i,j}_2=\mathrm{Tor}_{-i}(H^j(N_{X'}),k(x))\Rightarrow H^{i+j}(N_x)$ gives a short exact sequence (using that $H^1(N_{X'})$ has $\Tor$-dimension $\leq 1$ and $N_{X'}$ is flat)
\[
0\longrightarrow H^0(N_{X'})\otimes_{\OO_{X'}}k(x)\longrightarrow H^0(N_x)\longrightarrow \mathrm{Tor}_1(H^1(N_{X'}),k(x))\longrightarrow0.
\]
Since $H^1(N_{X'})$ has Tor-dimension $\leq1$, by \cite[Lemma 6.3.7]{KPX}, the set of $x\in X'$ for which the last term of the above exact sequence does not vanish forms a nowhere dense Zariski closed subset $V$. For any $z\in\pi^{-1}(Z)\setminus V$, we deduce that $H^0(N_{X'})\otimes_{\OO_{X'}}k(z)$ is a $c$-dimensional $k(z)$-vector space. Since $H^0(N_{X'})$ is flat and $\pi^{-1}(Z)\setminus V$ is a Zariski dense subset of $X'$, we get that $H^0(N_{X'})$ is locally free of constant rank $c$. Let $L$ be the dual coherent $\OO_{X'}$-module of it. Then the natural map $(\mathbf{B}_{\rig,K}^\dag\widehat{\otimes}_{\Q}\OO_{X'})H^0(N_{X'})\ra N_{X'}$
gives rise to a map 
\[
\lambda:D_{X'}\ra M_{X'}=(\mathbf{B}_{\rig,K}^\dag\widehat{\otimes}_{\Q}\OO_{X'})(\delta)\otimes_{\OO_{X'}}L.
\] 
For any $x\in X'$, since the map $H^0(N_{X'})\otimes_{\OO_{X'}}k(x)\rightarrow H^0(N_x)$ is injective, we get that the image of $\lambda_x$ is a rank $c$ $\m$-submodule of $M_x$. We thus conclude the proposition using the previous lemma.
\end{proof}

\section{Families of Hodge-Tate $\m$-modules}
From now on, let $S$ be an affinoid algebra over $F$. Recall that for any $n\geq n(s)$, there is a continuous $\Gamma$-equivariant injective map 
\[
\iota_n:\mathbf{B}_{\rig, K}^{\dagger,s}\ra K_n[[t]].
\]
It is defined as the composite
\[
\xymatrix{
\mathbf{B}_{K}^{\dagger,s}\subset\widetilde{\mathbf{B}}^{\dag,s}\stackrel{\varphi^{-n}}{\to}\widetilde{\mathbf{B}}^{\dag,p^{-n}s}\subset \widetilde{\mathbf{B}}^+\subset\mathbf{B}_{\mathrm{dR}}^+,}
\]
and it factors through $K_n[[t]]$  (see \cite[\S2]{LB02} for more details about $\iota_n$).  In particular we have $\iota_{n+1}\circ\varphi=\iota_n$. The map $\iota_n$ induces a continuous $\Gamma$-equivariant map 
\[
\iota_n: \mathbf{B}_{\rig,K}^{\dagger,s}\widehat{\otimes}_{\Q}S\ra K_n[[t]]\widehat{\otimes}_{\Q}S.
\] 

\begin{defn}
Let $D_S$ be a $\m$-module of rank $d$ over $\mathbf{B}_{\rig,K}^\dag\widehat{\otimes}_{\Q}S$.  For any positive integer $n$, if $D_S^{r_n}$ is defined, then for any $0<s\leq r_n$, we set
\[
\D_{\dif}^{+,K_n}(D_S)=D^{s}_S\otimes_{\mathbf{B}_{\rig,K}^{\dagger,s}\widehat{\otimes}_{\Q}S,\iota_n}(K_n[[t]]\widehat{\otimes}_{\Q}S)
\]
and
\[
\D_{\dif}^{K_n}(D_S)=\D_{\dif}^{+,K_n}(D_S)[1/t].
\]
We also denote the natural map 
\[
D^{s}_S\ra\D_{\dif}^{+,K_n}(D_S),
\]
by $\iota_n$, and call it the \emph{localization map}. Define $\D^{K_n}_{\Sen}(D_S)=\D^{+,K_n}_{\dif}(D_S)/(t)$. For $D_S=\D_{\rig}^\dag(V_S)$ coming from a finite locally free $S$-linear representation $V_S$, we write $\D^{+,K_n}_{\dif}(V_S)$ and $\D^{K_n}_{\Sen}(V_S)$ for $\D^{+,K_n}_{\dif}(D_S)$ and $\D^{K_n}_{\Sen}(D_S)$ respectively. When the base field is clear from the context, we write  $\D^{+,n}_{\dif}(D_S)$ and $\D^{n}_{\Sen}(D_S)$ instead of $\D^{+,K_n}_{\dif}(D_S)$ and $\D^{K_n}_{\Sen}(D_S)$ for simplicity. 
\end{defn}

\begin{defn}\label{def:HT}
We call $D_S$ \emph{Hodge-Tate with Hodge-Tate weights in $[a,b]$} if
there exists a positive integer $n$ such that
the natural map
\begin{equation}\label{eq:def-HT}
(\oplus_{a\leq i\leq b}\D^n_\Sen(D_S(-i)))^\Gamma\otimes_{K\otimes_{\Q}S}(K_n\otimes_{\Q}S)[t,t^{-1}]\longrightarrow \oplus_{i\in\mathbb{Z}}\D_\Sen^n(D_S(-i))
\end{equation}
is an isomorphism. We denote by $h_{HT}(D_S)$ the smallest $n$ which satisfies this condition, and we define $D_{\mathrm{HT}}(D_S)=(\oplus_{a\leq i\leq b}\D^{h_{HT}(D_S)}_\Sen(D_S(-i)))^\Gamma$.
\end{defn}

\begin{lemma}\label{lem:HT-inv}
Let $D_S$ be a Hodge-Tate $\m$-module over $\mathbf{B}_{\rig,K}^\dag\widehat{\otimes}_{\Q}S$ with weights in $[a,b]$. Then for any $n\geq h_{HT}(D_S)$, (\ref{eq:def-HT}) is an isomorphism and $\D_\Sen^n(D_S(-i))^{\Gamma}=\D_\Sen^{h_{HT}(D_S)}(D_S(-i))^{\Gamma}$ for any $i\in [a,b]$. As a consequence, we have
$(\oplus_{a\leq i\leq b}\D_\Sen^n(D_S(-i)))^\Gamma=D_{\mathrm{HT}}(D_S)$.
\end{lemma}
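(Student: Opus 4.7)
The plan is to argue by induction on $n \ge h := h_{HT}(D_S)$, simultaneously establishing the iso at level $n$ and the identification $\D^n_\Sen(D_S(-i))^\Gamma = \D^h_\Sen(D_S(-i))^\Gamma$ for $i \in [a,b]$; the base case $n = h$ is immediate from the definition of $h_{HT}$. The two key inputs are the base-change identity
\[
\D^{n+1}_\Sen(D_S(-i)) = \D^n_\Sen(D_S(-i)) \otimes_{K_n \otimes_\Q S} (K_{n+1} \otimes_\Q S),
\]
which follows from the vector-bundle transition $D_S^{r_{n+1}} \cong D_S^{r_n} \otimes_{\mathbf{B}^{\dag,r_n}_{\rig,K}\widehat{\otimes}_\Q S} \mathbf{B}^{\dag,r_{n+1}}_{\rig,K}\widehat{\otimes}_\Q S$, together with the vanishing of twisted $\Gamma$-invariants of $K_{n+1}$ that will be exploited at each step.

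For the inductive step, tensor the iso at level $n$ over $K_n \otimes_\Q S$ with $K_{n+1} \otimes_\Q S$; by the base-change identity and the inductive hypothesis on invariants, this yields an iso
\[
M \otimes_{K \otimes_\Q S} (K_{n+1} \otimes_\Q S)[t, t^{-1}] \xrightarrow{\sim} \oplus_{k \in \ZZ} \D^{n+1}_\Sen(D_S(-k)),
\]
where $M := (\oplus_{a \le i \le b} \D^h_\Sen(D_S(-i)))^\Gamma$. Now take $\Gamma$-invariants of both sides. The right side gives $\oplus_k \D^{n+1}_\Sen(D_S(-k))^\Gamma$. On the left, decompose $(K_{n+1} \otimes_\Q S)[t, t^{-1}]$ as $\oplus_{j \in \ZZ} K_{n+1}(j) \otimes_K (K \otimes_\Q S)$, which exhibits $\Gamma$ as acting only through $K_{n+1}$ twisted by $\chi^j$; since $M$ is $\Gamma$-trivial and flat over $K$, and since for any $\gamma \in \Gamma_{n+1}$ with $\chi(\gamma) \neq 1$ the element $\chi(\gamma)^j - 1$ is a nonzero element of $\Q$ hence a unit, we get $K_{n+1}(j)^{\langle\gamma\rangle} = 0$ for $j \neq 0$ and $K_{n+1}(0)^\Gamma = K$, so the left invariants collapse to $M$.

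Comparing, $\oplus_k \D^{n+1}_\Sen(D_S(-k))^\Gamma = M = \oplus_{a \le i \le b} \D^h_\Sen(D_S(-i))^\Gamma$. Since the base change $K_h \otimes_\Q S \to K_{n+1} \otimes_\Q S$ is faithfully flat, the termwise inclusions $\D^h_\Sen(D_S(-i))^\Gamma \hookrightarrow \D^{n+1}_\Sen(D_S(-i))^\Gamma$ are injective, forcing $\D^{n+1}_\Sen(D_S(-i))^\Gamma = \D^h_\Sen(D_S(-i))^\Gamma$ for $i \in [a,b]$ and $\D^{n+1}_\Sen(D_S(-k))^\Gamma = 0$ otherwise. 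Re-substituting these identifications into the displayed iso recovers (\ref{eq:def-HT}) at level $n+1$, completing the induction; the final statement $(\oplus_{a\le i \le b} \D^n_\Sen(D_S(-i)))^\Gamma = D_{HT}(D_S)$ is then immediate by summing. The main obstacle is the twisted-invariant computation: one must verify that $(K_{n+1} \otimes_\Q S)(j)^\Gamma = 0$ for $j \neq 0$, which relies on producing a $\gamma \in \Gamma_{n+1}$ with $\chi(\gamma) \ne 1$ and the flatness argument above; everything else is bookkeeping around the standard base-change properties of $\D_\Sen$.
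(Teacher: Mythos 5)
Your proof is correct and takes essentially the same approach as the paper: tensor the defining isomorphism at level $h_{HT}(D_S)$ up to level $n$ and take $\Gamma$-invariants, using the vanishing of twisted $\Gamma$-invariants of $K_n$ to collapse the left side. The only cosmetic differences are that you proceed by induction $n\to n+1$ while the paper tensors directly from level $h_{HT}(D_S)$ to level $n$, and you spell out the twisted-invariant computation that the paper leaves implicit (and which it invokes elsewhere via \cite[Proposition 2.2.1]{BC07}).
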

\begin{proof}
Tensoring with $K_{n}\otimes_{\Q}S[t,1/t]$ on both sides of the map
\[
(\oplus_{a\leq i\leq b}\D^{h_{HT}(D_S)}_\Sen(D_S(-i)))^\Gamma\otimes_{K\otimes_{\Q}S}(K_{h_{HT}(D_S)}
\otimes_{\Q}S)[t,t^{-1}]\longrightarrow \oplus_{i\in\mathbb{Z}}\D_\Sen^{h_{HT}(D_S)}(D_S(-i)),
\]
we get that the natural map
\[
(\oplus_{a\leq i\leq b}\D^{h_{HT}(D_S)}_\Sen(D_S(-i)))^\Gamma\otimes_{K\otimes_{\Q}S}(K_{n}\otimes_{\Q}S)[t,t^{-1}]\longrightarrow \oplus_{i\in\mathbb{Z}}\D_\Sen^{n}(D_S(-i))
\]
is an isomorphism. Taking $\Gamma$-invariants on both sides, we get
\[
(\oplus_{a\leq i\leq b}\D^{h_{HT}(D_S)}_\Sen(D_S(-i)))^\Gamma=(\oplus_{a\leq i\leq b}\D^{n}_\Sen(D_S(-i)))^\Gamma.
\]
This yields the lemma.
\end{proof}

\begin{remark}
If $D_S$ is Hodge-Tate with weights in $[a,b]$, then by taking $\Gamma$-invariants on both sides of (\ref{eq:def-HT}), we see that $\D^n_\Sen(D_S(-i))^{\Gamma}=0$ for any $n\geq h_{HT}(D_S)$ and $i\notin [a,b]$.
\end{remark}

\begin{lemma}\label{lem:HT}
If $D_S$ is a Hodge-Tate $\m$-module over $\mathbf{B}^\dag_{\rig,K}\widehat{\otimes}_{\Q}S$ with weights in $[a,b]$, then for any morphism $S\ra R$ of affinoid algebras over $K$, $D_R$ is Hodge-Tate with weights in $[a,b]$ and $h_{HT}(D_R)\leq h_{HT}(D_S)$. Furthermore, the natural map 
\[
\D^n_\Sen(D_S(i))^\Gamma\otimes_{S}R\ra\D^n_\Sen(D_R(i))^\Gamma
\] 
is an isomorphism for any $i\in\mathbb{Z}$ and $n\geq h_{HT}(D_S)$. As a consequence, the natural map $D_{\mathrm{HT}}(D_S)\otimes_SR\ra D_{\mathrm{HT}}(D_R)$ is an isomorphism.
\end{lemma}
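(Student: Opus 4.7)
The plan is to derive everything from the defining isomorphism (\ref{eq:def-HT}) for $D_S$ at any $n\geq h_{HT}(D_S)$, which is available by Lemma \ref{lem:HT-inv}. The first observation is that $\D^n_\Sen$ commutes with base change: since
\[
D_R^{r_n}=D_S^{r_n}\otimes_{\mathbf{B}^{\dag,r_n}_{\rig,K}\widehat{\otimes}_{\Q}S}\bigl(\mathbf{B}^{\dag,r_n}_{\rig,K}\widehat{\otimes}_{\Q}R\bigr),
\]
one has $\D^n_\Sen(D_R(-i))\cong\D^n_\Sen(D_S(-i))\otimes_S R$ functorially and $\Gamma$-equivariantly, and Tate twists and direct sums commute with base change. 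Applying $(-)\otimes_S R$ to (\ref{eq:def-HT}) therefore yields a $\Gamma$-equivariant isomorphism
\[
\bigl(D_{\mathrm{HT}}(D_S)\otimes_S R\bigr)\otimes_{K\otimes_{\Q}R}(K_n\otimes_{\Q}R)[t,t^{-1}]\xrightarrow{\sim}\bigoplus_{i\in\mathbb{Z}}\D^n_\Sen(D_R(-i)).
\]

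The technical heart is to compare $\Gamma$-invariants on both sides in a way that commutes with base change. On the left, $\Gamma$ acts trivially on $D_{\mathrm{HT}}(D_S)\otimes_S R$ (each summand $\D^n_\Sen(D_S(-j))^\Gamma$ is invariant by construction and $R$ carries the trivial action) and only through $(K_n\otimes_{\Q}R)[t,t^{-1}]$ via $\gamma(xt^k)=\chi(\gamma)^k\gamma(x)t^k$. The key identity to establish is
\[
\bigl[(K_n\otimes_{\Q}R)\cdot t^k\bigr]^\Gamma\;=\;\begin{cases}K\otimes_{\Q}R,&k=0,\\ 0,& k\neq 0,\end{cases}
\]
the case $k=0$ by finite Galois descent for $K_n/K$, and the case $k\neq 0$ by choosing $\gamma_0\in\Gal(K_\infty/K_n)$ with $\chi(\gamma_0)\neq 1$ (which exists because $\chi$ has open image in $\Zp^\times$): then $\chi(\gamma_0)^k-1$ is a nonzero element of $\Qp$, hence a unit in the $\Qp$-algebra $K_n\otimes_{\Q}R$, and $\gamma_0$ acts on $(K_n\otimes_{\Q}R)\cdot t^k$ as multiplication by $\chi(\gamma_0)^k$, forcing invariants to vanish. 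Because $\Gamma$ acts trivially on $D_{\mathrm{HT}}(D_S)\otimes_S R$, the same scalar argument propagates to the full tensor product, so on the left only the $k=0$ graded piece contributes invariants, which amount to $D_{\mathrm{HT}}(D_S)\otimes_S R$.

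Since the displayed isomorphism is graded in Tate twists, matching components shows that the only $j\in[a,b]$ contributing $\Gamma$-invariants to $\D^n_\Sen(D_R(-i))$ is $j=i$ (corresponding to $k=0$), giving
\[
\D^n_\Sen(D_R(-i))^\Gamma\;=\;\D^n_\Sen(D_S(-i))^\Gamma\otimes_S R
\]
for every $i\in\mathbb{Z}$ and every $n\geq h_{HT}(D_S)$; the right-hand side vanishes for $i\notin[a,b]$ by the Remark following Lemma \ref{lem:HT-inv}, consistent with both sides. Feeding this back into the displayed isomorphism shows that (\ref{eq:def-HT}) holds for $D_R$ at level $n$, so $D_R$ is Hodge-Tate with weights in $[a,b]$ and $h_{HT}(D_R)\leq h_{HT}(D_S)$. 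Summing over $i\in[a,b]$ delivers $D_{\mathrm{HT}}(D_S)\otimes_S R\xrightarrow{\sim} D_{\mathrm{HT}}(D_R)$.

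The main obstacle is the vanishing of $\Gamma$-invariants in the $t^k$-graded piece for $k\neq 0$; this rests on the openness of $\chi$'s image, so that $\chi(\gamma_0)^k-1$ can be arranged to be a nonzero $p$-adic number and hence a unit in any $\Qp$-algebra. Once this is in place, everything else is formal because $\D^n_\Sen$, Tate twists, direct sums, and the graded decomposition in powers of $t$ all respect the base change $S\to R$.
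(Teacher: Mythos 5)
Your proof is correct and follows the same route as the paper: tensor the defining isomorphism \eqref{eq:def-HT} with $R$ over $S$, then compare $\Gamma$-invariants on each side, the nontrivial twists contributing nothing because $\chi(\gamma_0)^k-1$ is a $p$-adic unit for $k\neq 0$. You simply unpack the step the paper leaves implicit (``comparing $\Gamma$-invariants on both sides''), which is a useful clarification but not a different argument.
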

\begin{proof}
Let $n\geq h_{HT}(D_S)$. Tensoring with $R$ over $S$ on both sides of (\ref{eq:def-HT}), we get that the natural map
\[
(\oplus_{a\leq i\leq b}\D^n_\Sen(D_S(-i))^\Gamma\otimes_SR)\otimes_{K\otimes_{\Q}R}(K_n\otimes_{\Q}R)[t,t^{-1}]\longrightarrow \oplus_{i\in\mathbb{Z}}\D_\Sen^n(D_R(-i)).
\]
is an isomorphism. Comparing $\Gamma$-invariants on both sides, we get that the natural map
\[
\D^n_\Sen(D_S(-i))^\Gamma\otimes_{S}R\ra\D^n_\Sen(D_R(-i))^\Gamma
\]
is an isomorphism for any $a\leq i\leq b$. This implies that the natural map
\[
(\oplus_{a\leq i\leq b}\D^n_\Sen(D_R(-i))^\Gamma\otimes_{K\otimes_{\Q}R}(K_n\otimes_{\Q}R)[t,t^{-1}]\longrightarrow \oplus_{i\in\mathbb{Z}}\D_\Sen^n(D_R(-i)).
\]
is an isomorphism. This proves the lemma.
\end{proof}

\begin{cor}
If $D_S$ is a Hodge-Tate $\m$-module of rank $d$ over $\mathbf{B}_{\rig,K}^\dag\widehat{\otimes}_{\Q}S$, then $D_{\mathrm{HT}}(D_S)$ is a locally free coherent $K\otimes_{\Q}S$-module of rank $d$.
\end{cor}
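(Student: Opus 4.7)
The plan is to deduce local freeness of $D_{\mathrm{HT}}(D_S)$ from the defining isomorphism (\ref{eq:def-HT}) by faithfully flat descent. Set $n = h_{HT}(D_S)$ and $R = K_n \otimes_\Q S$, and observe that $R[t,t^{-1}]$ is free of infinite rank over $K \otimes_\Q S$ (since $K_n/K$ is finite free, and $\{t^j\}_{j\in\mathbb{Z}}$ freely generate $R[t,t^{-1}]$ over $R$), hence faithfully flat over $K\otimes_\Q S$.

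First, I would identify the right-hand side of (\ref{eq:def-HT}) as an $R[t,t^{-1}]$-module. Since multiplication by $t$ shifts Sen weights by one, it induces an $R$-linear isomorphism $\D^n_\Sen(D_S(-i)) \xrightarrow{\sim} \D^n_\Sen(D_S(-(i-1)))$ at the Sen level. This equips $\oplus_{i\in\mathbb{Z}} \D^n_\Sen(D_S(-i))$ with an $R[t,t^{-1}]$-module structure, compatible with the target of (\ref{eq:def-HT}), under which it is isomorphic to
\[
\D^n_\Sen(D_S) \otimes_R R[t,t^{-1}]
\]
with the $i=0$ summand serving as an $R[t,t^{-1}]$-generator. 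Since $D_S^{r_n}$ is finite projective of rank $d$ over $\mathbf{B}^{\dag,r_n}_{\rig,K}\widehat{\otimes}_\Q S$, its base change $\D^n_\Sen(D_S)$ is finite projective of rank $d$ over $R$; hence the module above is finite projective of rank $d$ over $R[t,t^{-1}]$.

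Combining this with (\ref{eq:def-HT}) gives that $D_{\mathrm{HT}}(D_S) \otimes_{K\otimes_\Q S} R[t,t^{-1}]$ is locally free of rank $d$ over $R[t,t^{-1}]$. The conclusion follows by standard faithfully flat descent: finite generation descends, because finitely many $R[t,t^{-1}]$-generators of the base change involve only finitely many elements of $D_{\mathrm{HT}}(D_S)$, which must then generate it by faithful flatness; flatness descends through faithfully flat maps; and since $K\otimes_\Q S$ is noetherian, a finitely generated flat module is locally free, of rank $d$ since rank is detected after base change. I anticipate the only delicate point being the $R[t,t^{-1}]$-linear identification in the first step, where the $t$-action and the twist compatibility must be traced carefully; the rest is routine faithfully flat descent.
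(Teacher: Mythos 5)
Your proof is correct but takes a genuinely different route from the paper's. The paper deduces the corollary in one line by combining Lemma \ref{lem:HT} (compatibility of $D_{\mathrm{HT}}$ with base change) with the case $S=k(x)$ a finite extension of $K$: together these give that every fiber $D_{\mathrm{HT}}(D_S)\otimes_S k(x)\cong D_{\mathrm{HT}}(D_x)$ is free of rank $d$ over $K\otimes_{\Q}k(x)$, and one then invokes (implicitly) coherence of $D_{\mathrm{HT}}(D_S)$ together with the fact that a coherent module over a reduced affinoid with constant fiber dimension is locally free. You instead descend directly along the faithfully flat map $K\otimes_{\Q}S\to(K_n\otimes_{\Q}S)[t,t^{-1}]$ after identifying the target of (\ref{eq:def-HT}) with $\D^n_\Sen(D_S)\otimes_R R[t,t^{-1}]$. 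Your key identification is correct: the $\D^n_\Sen(D_S(-i))$ are precisely the $t$-adic graded pieces $t^{-i}\D^{+,n}_\dif(D_S)/t^{-i+1}\D^{+,n}_\dif(D_S)$, so multiplication by $t$ gives an $R$-linear isomorphism $\D^n_\Sen(D_S(-i))\xrightarrow{\sim}\D^n_\Sen(D_S(-(i-1)))$, exhibiting the $i=0$ summand as a free $R[t,t^{-1}]$-generator of rank $d$. Your route is more self-contained than the paper's: it does not use Lemma \ref{lem:HT}, it produces finite generation as part of the descent rather than taking coherence for granted, and it works without the reducedness hypothesis on $S$ (which the paper's constant-fiber-dimension argument requires). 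The paper's route is shorter once coherence and the base-change lemma are in hand, and mirrors the pattern it reuses in the de Rham section.
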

\begin{proof}
By the previous lemma, it suffices to treat the case that $S$ is a finite extension of $K$; this is clear from the isomorphism (\ref{eq:def-HT}).
\end{proof}

\begin{defn}
Let $X$ be a rigid analytic space over $F$, and let $D_X$ be a family of $\m$-modules of rank $d$ over $X$. We call $D_X$ \emph{Hodge-Tate} with weights in $[a,b]$ if for some (hence any) admissible cover $\{M(S_i)\}_{i\in I}$ of $X$, $D_{S_i}$ is Hodge-Tate with weights in $[a,b]$ for any $i\in I$. We define $D_{\mathrm{HT}}(D_X)$ to be the gluing of all $D_{\mathrm{HT}}(D_{S_i})$'s.
\end{defn}

\begin{lemma}\label{lem:HT-criterion}
Let $D_S$ be a $\m$-module over $\mathbf{B}_{\rig,K}^\dag\widehat{\otimes}_{\Q}S$. Then (\ref{eq:def-HT}) is an isomorphism if and only if the natural map
\begin{equation}\label{eq:lem-HT}
\oplus_{a\leq i\leq b}\D_\Sen^n(D_S)^{\Gamma_n=\chi^i}\longrightarrow\D_\Sen^n(D_S)
\end{equation}
is an isomorphism. Furthermore, if this is the case, then (\ref{eq:def-HT}) holds for $n$.
\end{lemma}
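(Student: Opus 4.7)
The plan is to reduce both isomorphism statements to a common form, by rewriting the $\Gamma$-invariants in (\ref{eq:def-HT}) as $\chi^i$-eigenspaces for $\Gamma$ acting on $M:=\D_\Sen^n(D_S)$, and then using étale Galois descent along $K\otimes_\Q S\to K_n\otimes_\Q S$ to bridge $\Gamma$-eigenspaces and $\Gamma_n$-eigenspaces.

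The first step is the standard identification of twist and eigenspace. Writing $\D_\Sen^n(D_S(-i)) = M\cdot t^{-i}$ with twisted action $\gamma(v\,t^{-i})=\chi(\gamma)^{-i}\gamma(v)\,t^{-i}$, one has $\D_\Sen^n(D_S(-i))^\Gamma=M^{\Gamma=\chi^i}\cdot t^{-i}$. Substituting this into the source of (\ref{eq:def-HT}) and grading the target $\bigoplus_{j\in\mathbb{Z}} M\cdot t^{-j}$ by the power of $t$, the map (\ref{eq:def-HT}) decomposes into a direct sum (over $j\in\mathbb{Z}$) of copies of a single map
\[
\bigoplus_{a\le i\le b} M^{\Gamma=\chi^i}\otimes_{K\otimes_\Q S}(K_n\otimes_\Q S)\longrightarrow M,
\]
induced by the inclusions $M^{\Gamma=\chi^i}\hookrightarrow M$; thus (\ref{eq:def-HT}) is an isomorphism if and only if this displayed map is.

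It remains to identify the displayed map with (\ref{eq:lem-HT}), for which I would invoke Galois descent for the finite étale cover $K\otimes_\Q S\to K_n\otimes_\Q S$ of group $\Gal(K_n/K)=\Gamma/\Gamma_n$. Concretely, the $K_n\otimes_\Q S$-submodule $M^{\Gamma_n=\chi^i}\subseteq M$, equipped with its $\Gamma$-action renormalized by $\chi^{-i}$, carries a semilinear action of $\Gamma/\Gamma_n$ whose invariants are exactly $M^{\Gamma=\chi^i}$; descent then yields a natural isomorphism
\[
M^{\Gamma=\chi^i}\otimes_{K\otimes_\Q S}(K_n\otimes_\Q S)\cong M^{\Gamma_n=\chi^i}.
\]
Substituting this into the displayed map of the previous paragraph produces exactly (\ref{eq:lem-HT}) at the same level $n$, proving both the equivalence and the ``furthermore'' clause.

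The only technical point is ensuring that $M^{\Gamma_n=\chi^i}$ is sufficiently well-behaved (e.g.\ finitely generated projective over $K_n\otimes_\Q S$) for the étale descent step to apply cleanly; this reduces to finite projectivity of $M=\D_\Sen^n(D_S)$ over $K_n\otimes_\Q S$, which is inherited from the finite projective $\varphi$-module $D_S^{r_n}$ via its definition, together with the fact that an eigenspace for a semilinear action of the abelian group $\Gamma/\Gamma_n$ factoring through a character is a direct summand.
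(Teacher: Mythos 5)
Your proof is correct and reaches the statement, but it organizes the argument somewhat differently from the paper, so a brief comparison is worthwhile. Both arguments rest on the same key ingredient, the Galois-descent identity $M^{\Gamma=\chi^i}\otimes_{K\otimes_\Q S}(K_n\otimes_\Q S)\cong M^{\Gamma_n=\chi^i}$, which (after multiplying through by $t^i$) is exactly the paper's invocation of \cite[Proposition 2.2.1]{BC07}. You deploy it symmetrically: once the $\Gamma$-invariants of the twists are rewritten as eigenspaces $M^{\Gamma=\chi^i}$ and the target of (\ref{eq:def-HT}) is graded by $t$-degree, the maps (\ref{eq:def-HT}) and (\ref{eq:lem-HT}) literally coincide, so the equivalence and the ``furthermore'' clause drop out simultaneously. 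The paper instead treats the two directions asymmetrically: for ``$\Rightarrow$'' it uses only the easy inclusion $t^i\cdot\D_\Sen^n(D_S(-i))^\Gamma\subseteq\D_\Sen^n(D_S)^{\Gamma_n=\chi^i}$ together with the linear independence of distinct $\Gamma_n$-eigenspaces (to get surjectivity and injectivity of (\ref{eq:lem-HT}) respectively), and reserves the descent identity for ``$\Leftarrow$''. Your version thus saves a separate injectivity check. One small correction to your closing remark: the concern about finite projectivity of $M^{\Gamma_n=\chi^i}$ is unnecessary, because faithfully flat (hence finite \'etale Galois) descent is an equivalence on the full categories of modules, with no finiteness hypotheses required; and the justification you sketch is also slightly misdirected --- $M^{\Gamma_n=\chi^i}$ is the $\chi^i$-eigenspace for the $K_n\otimes_\Q S$-linear action of $\Gamma_n$, not of $\Gamma/\Gamma_n$, and absent the hypothesis of the lemma it need not be a direct summand of $M$. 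Fortunately none of this is needed for the descent step.
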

\begin{proof}
For the ``$\Rightarrow$'' part, since (\ref{eq:def-HT}) is an isomorphism, we deduce that
\begin{equation}\label{eq:lem-HT-2}
\D_\Sen^n(D_S)=\oplus_{a\leq i\leq b}t^i\cdot\D^n_{\Sen}(D_S(-i))^\Gamma
\otimes_{K\otimes_{\Q}S}(K_n\otimes_{\Q}S).
\end{equation}
Note that $t^i\cdot\D^n_{\Sen}(D_S(-i))^\Gamma\subseteq\D_\Sen^n(D_S)^{\Gamma_n=\chi^i}$. Hence (\ref{eq:lem-HT-2}) implies that (\ref{eq:lem-HT}) is surjective. On the other hand, it is clear that (\ref{eq:def-HT}) is injective; hence it is an isomorphism. Conversely, suppose that (\ref{eq:lem-HT}) is an isomorphism. Note that
\[
\D_\Sen^n(D_S)^{\Gamma_n=\chi^i}=t^i\cdot\D_\Sen^n(D_S(-i))^{\Gamma_n}=(t^i\cdot\D_\Sen^n(D_S(-i))^\Gamma)
\otimes_{K\otimes_{\Q}S}(K_n\otimes_{\Q}S),
\]
where the latter equality follows from \cite[Proposition 2.2.1]{BC07}. This implies that $D_S$ satisfies (\ref{eq:lem-HT-2}), yielding that $D_S$ satisfies (\ref{eq:def-HT}).
\end{proof}

\begin{prop}\label{prop:HT-family}
Let $S$ be reduced, and let $D_S$ be a $\m$-module over $\mathbf{B}_{\rig,K}^\dag\widehat{\otimes}S$. Suppose that there exists a Zariski dense subset $Z\subset M(S)$ such that $D_z$ is Hodge-Tate with weights in $[a,b]$ for any $z\in Z$ and $\sup_{z\in Z}\{h_{HT}(D_z)\}<\infty$. Then $D_S$ is Hodge-Tate with weights in $[a,b]$.
\end{prop}
\begin{proof}
Let $n\geq\sup_{z\in Z}\{h_{HT}(D_z)\}$ such that $D_S^n$ is defined, and let $\gamma$ be a topological generator of $\Gamma_n$. For any $a\leq i\leq b$, let $p_i$ denote the operator
$\prod_{a\leq j\leq b, j\neq i}\frac{\gamma-\chi^{j}(\gamma)}{\chi^i(\gamma)-\chi^j(\gamma)}$,
and let $M_i=p_i(\D_\Sen^n(D_S))$. It is clear that $p_i$ is the identity on $\D_{\Sen}^n(D_S)^{\Gamma_n=\chi^i}$; hence  $\D_{\Sen}^n(D_S)^{\Gamma_n=\chi^i}\subseteq M_i$. On the other hand, for any $z\in Z$, since $D_z$ is Hodge-Tate with weights in $[a,b]$ and $h_{HT}(D_z)\leq n$, we deduce from Lemma \ref{lem:HT-criterion} that $p_i(\D_\Sen^n(D_z))=\D^n_\Sen(D_z)^{\Gamma_n=\chi^i}$. This implies that $M_i$ maps onto $\D^n_\Sen(D_z)^{\Gamma_n=\chi^i}$ under the specialization $\D_\Sen^n(D_S)\ra \D_\Sen^n(D_z)$. Since $S$ is reduced and $Z$ is Zariski dense, we conclude $M_i\subseteq\D^n_\Sen(D)^{\Gamma_n=\chi^i}$; hence $M_i=\D^n_\Sen(D)^{\Gamma_n=\chi^i}$.
Let $M=\oplus_{a\leq i\leq b}M_i$. We claim that the natural inclusion $M\subseteq \D_\Sen^n(D_S)$ is an isomorphism. In fact, for any $z\in Z$, since $\D_\Sen^n(D_z)=\oplus_{a\leq i\leq b}\D_\Sen^n(D_z)^{\Gamma_n=\chi^i}$, we have that $M$ maps onto $\D_\Sen^n(D_z)$. Thus $\D^n_\Sen(D_S)/M$ vanishes at $z$. We therefore conclude $\D^n_\Sen(D_S)/M=0$ because $S$ is reduced and $Z$ is Zariski dense. By Lemma \ref{lem:HT-criterion} and the claim, we conclude that $D_S$ is Hodge-Tate with weights in $[a,b]$.
\end{proof}

\section{Families of de Rham $\m$-modules}
\begin{defn}\label{def:dR}
Let $D_S$ be a $\m$-module of rank $d$ over $\mathbf{B}_{\rig,K}^\dag\widehat{\otimes}_{\Q}S$. For any positive integer $n$, if $D_S^{r_n}$ is defined, then we equip $\D_\dif^n(D_S)$ with the filtration $\mathrm{Fil}^i\D_\dif^n(D_S)=t^i\D_\dif^{+,n}(D_S)$. We call $D_S$ \emph{de Rham with weights in $[a,b]$} if there exists a positive integer $n$ such that
\begin{enumerate}
\item[(1)]
the natural map
\begin{equation}\label{eq:def-de Rham}
\D^n_\dif(D_S)^\Gamma\otimes_{K\otimes_{\Q}S}(K_n\otimes_{\Q}S)[[t]][1/t]\longrightarrow \D_\dif^n(D_S)
\end{equation}
is an isomorphism;
\item[(2)]$\mathrm{Fil}^{-b}(\D^n_\dif(D_S)^\Gamma)=D_S$ and $\mathrm{Fil}^{-a+1}(\D^n_\dif(D_S)^\Gamma)=0$
where $\mathrm{Fil}^{i}(\D^n_\dif(D_S)^\Gamma)$ is the induced filtration on $\D^n_\dif(D_S)^\Gamma$.
\end{enumerate}
We denote by $h_{dR}(D_S)$ the smallest $n$ which satisfies these conditions, and we define $D_{\mathrm{dR}}(D_S)=\D^{h_{dR}(D_S)}_\dif(D_S)^\Gamma$.
\end{defn}

\begin{lemma}\label{lem:dR-inv}
Let $D$ be a de Rham $\m$-module over $\mathbf{B}_{\rig,K}^\dag\widehat{\otimes}_{\Q}S$. Then for any $n\geq h_{dR}(D_S)$, $\D^n_\dif(D_S)^\Gamma=D_{\mathrm{dR}}(D_S)$
\end{lemma}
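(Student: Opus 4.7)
The plan is to mimic the proof of Lemma \ref{lem:HT-inv}, with the formal Laurent series ring $(K_n\otimes_{\Q}S)[[t]][1/t]$ playing the role of the Laurent polynomial ring $(K_n\otimes_{\Q}S)[t,t^{-1}]$. Write $h=h_{dR}(D_S)$ for brevity. By Definition \ref{def:dR} applied at $n=h$, together with the definition $D_{\mathrm{dR}}(D_S)=\D^h_\dif(D_S)^\Gamma$, one has the isomorphism
\[
D_{\mathrm{dR}}(D_S)\otimes_{K\otimes_{\Q}S}(K_h\otimes_{\Q}S)[[t]][1/t]\stackrel{\sim}{\longrightarrow}\D^h_\dif(D_S).
\]

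First, I would tensor both sides over $(K_h\otimes_{\Q}S)[[t]][1/t]$ with $(K_n\otimes_{\Q}S)[[t]][1/t]$ for any $n\geq h$. Since $\D^n_\dif(D_S)$ is by construction obtained from $\D^h_\dif(D_S)$ by this very extension of scalars, this yields the isomorphism
\[
D_{\mathrm{dR}}(D_S)\otimes_{K\otimes_{\Q}S}(K_n\otimes_{\Q}S)[[t]][1/t]\stackrel{\sim}{\longrightarrow}\D^n_\dif(D_S).
\]
Second, I would take $\Gamma$-invariants on both sides. The right side becomes $\D^n_\dif(D_S)^\Gamma$, which is what we wish to identify. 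For the left side, I use the identity $((K_n\otimes_{\Q}S)[[t]][1/t])^\Gamma = K\otimes_{\Q}S$: for a series $\sum a_it^i$ to be $\Gamma$-fixed forces $\chi(\gamma)^i\gamma(a_i)=a_i$ for each $\gamma$ and each $i$, hence $a_i=0$ for $i\neq 0$ (since $\chi$ is non-torsion) and $a_0\in(K_n\otimes_{\Q}S)^\Gamma = K\otimes_{\Q}S$. Combined with the trivial $\Gamma$-action on the first factor $D_{\mathrm{dR}}(D_S)$, the $\Gamma$-invariants of the left-hand side collapse to $D_{\mathrm{dR}}(D_S)$, yielding $\D^n_\dif(D_S)^\Gamma = D_{\mathrm{dR}}(D_S)$.

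The main (and only) subtlety is the commutation of $\Gamma$-invariants with the tensor product on the left side. This is justified exactly as in Lemma \ref{lem:HT-inv}: the $\Gamma$-action on $(K_n\otimes_{\Q}S)[[t]][1/t]$ scales the $t^i$-component by $\chi^i$, so a $\Gamma$-invariant element of the tensor product must be $\Gamma$-invariant in each $t$-degree separately, and only the $t^0$-piece survives. This obstacle is minor, being completely parallel to the polynomial case already handled in Lemma \ref{lem:HT-inv}.
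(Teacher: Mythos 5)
Your proposal is essentially the paper's own argument: tensor the isomorphism at level $h_{dR}(D_S)$ up to level $n$ and compare $\Gamma$-invariants on both sides. The paper states this very tersely ("Comparing $\Gamma$-invariants on both sides, we get the desired result"), whereas you spell out the degree-by-degree analysis showing $((K_n\otimes_{\Q}S)[[t]][1/t])^\Gamma = K\otimes_{\Q}S$ and that $\Gamma$-invariants commute with the tensor product here; this is the same reasoning, just made explicit.
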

\begin{proof}
We tensor $K_{n+1}\otimes_{\Q}S[[t]][1/t]$ on both sides of the map
\[
\D^{h_{dR}(D_S)}_\dif(D_S)^\Gamma\otimes_{K\otimes_{\Q}S}(K_{h_{dR}(D_S)}\otimes_{\Q}S)[[t]][1/t]\longrightarrow \D_\dif^{h_{dR}(D_S)}(D_S),
\]
yielding that the map
\[
\D^{h_{dR}(D_S)}_\dif(D_S)^\Gamma\otimes_{K\otimes_{\Q}S}(K_{n}\otimes_{\Q}S)[[t]][1/t]\longrightarrow \D_\dif^{n}(D_S).
\]
is an isomorphism. Comparing $\Gamma$-invariants on both sides, we get the desired result.
\end{proof}

\begin{lemma}\label{lem:dR-HT}
If $D$ is a de Rham $\m$-module of rank $d$ over $\mathbf{B}_{\rig,K}^\dag\widehat{\otimes}_{\Q}S$ with weights in $[a,b]$, then $D$ is Hodge-Tate with weights in $[a,b]$ and $h_{HT}(D_S)\leq h_{dR}(D_S)$. Furthermore, we have $\mathrm{Gr}^iD_{\mathrm{dR}}(D_S)=\D_\Sen^n(D_S(i))^\Gamma$ under the identification $\mathrm{Gr}^i\D_\dif^n(D_S)=\D_\Sen^n(D_S(i))$ for any $n\geq h_{dR}(D_S)$.
\end{lemma}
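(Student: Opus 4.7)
My plan is to take the associated graded of the de Rham isomorphism and match the result against the Hodge--Tate criterion of Lemma \ref{lem:HT-criterion}; the identification of graded pieces will fall out of the same computation. I will set $n = h_{dR}(D_S)$ throughout (the case of general $n \ge h_{dR}(D_S)$ then follows from Lemma \ref{lem:dR-inv}).

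From Definition \ref{def:dR} and Lemma \ref{lem:dR-inv}, there is a filtered isomorphism
\[
D_{\mathrm{dR}}(D_S)\otimes_{K\otimes_{\Q}S}(K_n\otimes_{\Q}S)[[t]][1/t]\cong \D^n_\dif(D_S),
\]
and by condition (2) of Definition \ref{def:dR} the filtration on $D_{\mathrm{dR}}(D_S)$ is concentrated in degrees $[-b,-a]$. Taking $\mathrm{Gr}^0$ via the tensor product filtration on the left yields the finite direct sum decomposition
\[
\D^n_\Sen(D_S)=\mathrm{Gr}^0\D^n_\dif(D_S)\cong \bigoplus_{k=a}^{b}\mathrm{Gr}^{-k}D_{\mathrm{dR}}(D_S)\otimes_{K\otimes_{\Q}S}(K_n\otimes_{\Q}S)\cdot t^{k}.
\]
Since $\Gamma_n$ fixes $K_n$ pointwise and acts trivially on $D_{\mathrm{dR}}(D_S)=\D^n_\dif(D_S)^\Gamma$, while acting by $\chi^k$ on the symbol $t^k$, the $k$-th summand is contained in $\D^n_\Sen(D_S)^{\Gamma_n=\chi^k}$. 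Combined with the general inclusion $\bigoplus_{k=a}^{b}\D^n_\Sen(D_S)^{\Gamma_n=\chi^k}\hookrightarrow \D^n_\Sen(D_S)$, this forces equality throughout. Applying Lemma \ref{lem:HT-criterion} then shows $D_S$ is Hodge--Tate with weights in $[a,b]$ and $h_{HT}(D_S)\le n=h_{dR}(D_S)$, and simultaneously produces the explicit identification $\D^n_\Sen(D_S)^{\Gamma_n=\chi^k}\cong \mathrm{Gr}^{-k}D_{\mathrm{dR}}(D_S)\otimes_{K\otimes_{\Q}S}(K_n\otimes_{\Q}S)\cdot t^k$.

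For the ``furthermore'' clause, I would extract $\Gamma/\Gamma_n=\mathrm{Gal}(K_n/K)$-invariants with the twist $\chi^k$. On a pure tensor $x\otimes y\cdot t^k$ in the $k$-th summand, a representative $\gamma$ of $\Gamma/\Gamma_n$ acts by $x\otimes\gamma(y)\chi(\gamma)^k t^k$, so the $\chi^k$-isotypic condition collapses to $\gamma(y)=y$ for all $\gamma$, i.e.\ $y\in K\otimes_{\Q}S$. Since $K_n\otimes_{\Q}S$ is free of rank $[K_n:K]$ over $K\otimes_{\Q}S$ with a finite $\mathrm{Gal}(K_n/K)$-action and we work in characteristic $0$, it decomposes isotypically as a $K\otimes_{\Q}S$-module, and tensoring with the $\Gamma$-trivial module $\mathrm{Gr}^{-k}D_{\mathrm{dR}}(D_S)$ respects this decomposition. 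Consequently $\D^n_\Sen(D_S)^{\Gamma=\chi^k}\cong \mathrm{Gr}^{-k}D_{\mathrm{dR}}(D_S)\cdot t^k$, which under the identification $\mathrm{Gr}^i\D^n_\dif(D_S)\cong \D^n_\Sen(D_S(i))$ (with $i=-k$) translates to the required equality $\mathrm{Gr}^iD_{\mathrm{dR}}(D_S)=\D^n_\Sen(D_S(i))^\Gamma$.

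The main obstacle is bookkeeping: correctly tracking the sign conventions in the Tate twists, in the identification $\mathrm{Gr}^i\D^n_\dif(D_S)\cong \D^n_\Sen(D_S(i))$, and in the passage from $\Gamma_n$-isotypic to $\Gamma$-isotypic decompositions. Once the conventions are aligned, the analytic content is modest: it amounts to the semisimplicity of the finite group $\mathrm{Gal}(K_n/K)$ acting on $K_n\otimes_{\Q}S$ in characteristic $0$.
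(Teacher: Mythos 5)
Your proposal is essentially the paper's own argument: both proofs pass to the associated graded of the de Rham isomorphism (\ref{eq:def-de Rham}), observe that the resulting graded pieces line up with the $\Gamma_n$-eigenspace decomposition of $\D^n_\Sen(D_S)$ so that the Hodge--Tate condition follows, and then recover $\mathrm{Gr}^iD_{\mathrm{dR}}(D_S)=\D^n_\Sen(D_S(i))^\Gamma$ by comparing $\Gamma$-invariants. The only cosmetic difference is that you route the Hodge--Tate conclusion through Lemma~\ref{lem:HT-criterion} and then descend from $\Gamma_n$ to $\Gamma$ in a separate step, whereas the paper verifies (\ref{eq:def-HT}) directly and takes $\Gamma$-invariants of the graded isomorphism in one stroke; the content is the same.
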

\begin{proof}
Let $n\geq h_{dR}(D_S)$. Since (\ref{eq:def-de Rham}) is an isomorphism, we deduce that the natural map of graded modules
\begin{equation}\label{eq:lem-dR-HT}
\oplus_{i\in\mathbb{Z}}\mathrm{Gr}^iD_{\mathrm{dR}}(D_S)
\otimes_{K\otimes_{\Q}S}(K_n\otimes_{\Q}S)[t,t^{-1}]\longrightarrow \oplus_{i\in\mathbb{Z}}\D_\Sen^n(D_S(i))
\end{equation}
is surjective. On the other hand, since $t^i\cdot\mathrm{Gr}^{-i}D_{\mathrm{dR}}(D_S)\subset \D_{\Sen}^n(D_S)$, we have that the natural map
\[
\oplus_{a\leq i\leq b}t^i\cdot\mathrm{Gr}^{-i}D_{\mathrm{dR}}(D_S)\ra \D_\Sen^n(D_S)
\]
is injective. This implies that (\ref{eq:lem-dR-HT}) is injective; hence it is an isomorphism. Comparing the $\Gamma$-invariants on both sides, we get $\mathrm{Gr}^iD_{\mathrm{dR}}(D_S)=\D_\Sen^n(D_S(i))^\Gamma$ for each $i\in\mathbb{Z}$. This proves the lemma.
\end{proof}

\begin{lemma}\label{lem:dR}
If $D_S$ is a de Rham $\m$-module over $\mathbf{B}^\dag_{\rig,K}\widehat{\otimes}_{\Q}S$, then for any morphism $S\ra R$ of affinoid algebras over $K$, $D_R$ is de Rham with weights in $[a,b]$ and $h_{dR}(D_R)\leq h_{dR}(D_S)$. Furthermore, the natural maps $\mathrm{Fil}^i D_{\mathrm{dR}}(D_S)\otimes_{S}R\ra \mathrm{Fil}^iD_{\mathrm{dR}}(D_R)$ are isomorphisms for all $i\in \mathbb{Z}$.
\end{lemma}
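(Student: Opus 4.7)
The plan is to mimic the proof of Lemma \ref{lem:HT} for the de Rham setting, then extract the filtration statement by a graded-piece induction. Fix $n \geq h_{dR}(D_S)$. Since $D_R^{r_n} = D_S^{r_n} \widehat{\otimes}_S R$, we have that $\D^n_\dif(D_R)$ is obtained from $\D^n_\dif(D_S)$ by base change. Tensoring the isomorphism (\ref{eq:def-de Rham}) with $R$ over $S$ yields
\[
(D_{\mathrm{dR}}(D_S) \otimes_S R) \otimes_{K\otimes_\Q R}(K_n\otimes_\Q R)[[t]][1/t] \xrightarrow{\sim} \D^n_\dif(D_R).
\]

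Taking $\Gamma$-invariants on both sides, as in the proof of Lemma \ref{lem:HT}, yields the base change identification $D_{\mathrm{dR}}(D_S) \otimes_S R \xrightarrow{\sim} \D^n_\dif(D_R)^\Gamma$. Combined with the display above, this verifies condition (1) of Definition \ref{def:dR} for $D_R$ at level $n$, so $D_R$ is de Rham and $h_{dR}(D_R) \leq h_{dR}(D_S)$. Moreover, the filtration $\mathrm{Fil}^i \D^n_\dif = t^i \D^{+,n}_\dif$ is visibly compatible with base change from $S$ to $R$, so condition (2) for $D_R$ follows by base-changing the corresponding identities for $D_S$, using the isomorphism above.

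It remains to verify that $\mathrm{Fil}^i D_{\mathrm{dR}}(D_S) \otimes_S R \xrightarrow{\sim} \mathrm{Fil}^i D_{\mathrm{dR}}(D_R)$ for every $i$. Since the weights lie in $[a,b]$, the filtration has only finitely many nontrivial steps, with $\mathrm{Fil}^{-a+1} D_{\mathrm{dR}} = 0$. We argue by descending induction starting from this vanishing, using the short exact sequences
\[
0 \longrightarrow \mathrm{Fil}^{i+1} D_{\mathrm{dR}}(D_S) \longrightarrow \mathrm{Fil}^i D_{\mathrm{dR}}(D_S) \longrightarrow \mathrm{Gr}^i D_{\mathrm{dR}}(D_S) \longrightarrow 0.
\]
By Lemma \ref{lem:dR-HT}, $\mathrm{Gr}^i D_{\mathrm{dR}}(D_S) = \D_\Sen^n(D_S(i))^\Gamma$, which is locally free over $K\otimes_\Q S$ by the Corollary following Lemma \ref{lem:HT} and base changes correctly by Lemma \ref{lem:HT}. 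Local freeness of the graded pieces guarantees that the displayed sequence remains exact after $\otimes_S R$, and the inductive hypothesis together with the five-lemma yields the desired isomorphism on $\mathrm{Fil}^i$.

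The main obstacle is the $\Gamma$-invariants step, namely verifying that taking $\Gamma$-invariants commutes with the tensor factor $(K_n\otimes_\Q R)[[t]][1/t]$. In the Hodge-Tate case (Lemma \ref{lem:HT}) this amounted to tracking cyclotomic eigenspaces in $(K_n\otimes_\Q R)[t,t^{-1}]$; here one must upgrade the analysis to $[[t]][1/t]$, using the filtration by $t$-powers and the fact that $\Gamma$ acts on $t$ via the cyclotomic character so that the weight-$i$ eigenspace is again generated by $t^i$ over $K\otimes_\Q R$. Once this $\Gamma$-invariants computation is in hand, the rest of the argument is formal.
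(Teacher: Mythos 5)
Your proof follows essentially the same route as the paper's: base-change the isomorphism (\ref{eq:def-de Rham}), take $\Gamma$-invariants to obtain $D_{\mathrm{dR}}(D_S)\otimes_S R \cong \D^n_\dif(D_R)^\Gamma$, and then deduce compatibility of the filtration from the graded-piece base change supplied by Lemmas \ref{lem:HT} and \ref{lem:dR-HT}. The paper leaves the descending induction on $\mathrm{Fil}^i$ implicit (``this implies the rest of the lemma''), while you spell it out via the short exact sequences and local freeness of the graded pieces, which is a welcome clarification but not a different argument.
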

\begin{proof}
Let $n\geq h_{dR}(D_S)$. Tensoring with $(K_n\otimes_{\Q}R)[[t]][1/t]$ on both sides of (\ref{eq:def-de Rham}), we get that the natural map
\begin{equation}\label{eq:lem-dR}
(\D^n_\dif(D_S)^\Gamma\otimes_S R)\otimes_{K\otimes_{\Q}R}(K_n\otimes_{\Q}R)[[t]][1/t]\longrightarrow \D_\dif^n(D_R).
\end{equation}
is an isomorphism. Comparing $\Gamma$-invariants on both sides of (\ref{eq:lem-dR}), we get that the natural map $\D^n_\dif(D_S)^\Gamma\otimes_{S}R\ra\D^n_\dif(D_R)^\Gamma$
is an isomorphism; hence $D_R$ is de Rham. Then by Lemmas \ref{lem:HT} and \ref{lem:dR-HT}, we deduce that the natural map
$\mathrm{Gr}^i(D_{\mathrm{dR}}(D_S))\otimes_SR\ra\mathrm{Gr}^i(D_{\mathrm{dR}}(D_R))$ is an isomorphism.
This implies the rest of the lemma.
\end{proof}

\begin{cor}
If $D_S$ is a de Rham $\m$-module of rank $d$ over $\mathbf{B}_{\rig,K}^\dag\widehat{\otimes}_{\Q} S$, then $D_{\mathrm{dR}}(D_S)$ is a locally free coherent $K\otimes_{\Q}S$-module of rank $d$.
\end{cor}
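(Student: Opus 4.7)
The plan is to bootstrap from the Hodge-Tate corollary via the Hodge filtration on $D_{\mathrm{dR}}(D_S)$. By Lemma \ref{lem:dR-HT}, $D_S$ is Hodge-Tate with weights in $[a,b]$, and for any $n \geq h_{dR}(D_S)$ we have the identification $\mathrm{Gr}^i D_{\mathrm{dR}}(D_S) = \D_\Sen^n(D_S(i))^\Gamma$. Re-indexing with $j = -i$, these graded pieces are exactly the summands appearing in $D_{\mathrm{HT}}(D_S) = \bigoplus_{a \leq j \leq b} \D_\Sen^n(D_S(-j))^\Gamma$, so that
\[
\bigoplus_{-b \leq i \leq -a} \mathrm{Gr}^i D_{\mathrm{dR}}(D_S) \;\cong\; D_{\mathrm{HT}}(D_S).
\]
By the preceding Hodge-Tate corollary, the right-hand side is a locally free coherent $K \otimes_\Q S$-module of rank $d$; hence each direct summand $\mathrm{Gr}^i D_{\mathrm{dR}}(D_S)$ is itself locally free coherent, and the sum of their ranks is $d$.

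Next, condition (2) of Definition \ref{def:dR} provides a finite decreasing filtration
\[
D_{\mathrm{dR}}(D_S) \;=\; \mathrm{Fil}^{-b} D_{\mathrm{dR}}(D_S) \;\supset\; \mathrm{Fil}^{-b+1} D_{\mathrm{dR}}(D_S) \;\supset\; \cdots \;\supset\; \mathrm{Fil}^{-a+1} D_{\mathrm{dR}}(D_S) \;=\; 0
\]
whose successive quotients have just been shown to be locally free coherent. I would then argue by descending induction on $i$ from $i = -a$ down to $i = -b$ that each $\mathrm{Fil}^i D_{\mathrm{dR}}(D_S)$ is a locally free coherent $K \otimes_\Q S$-module. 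The inductive step invokes the short exact sequence
\[
0 \longrightarrow \mathrm{Fil}^{i+1} D_{\mathrm{dR}}(D_S) \longrightarrow \mathrm{Fil}^i D_{\mathrm{dR}}(D_S) \longrightarrow \mathrm{Gr}^i D_{\mathrm{dR}}(D_S) \longrightarrow 0,
\]
whose outer terms are locally free coherent; after localizing at any maximal ideal of $K \otimes_\Q S$, the projective quotient $\mathrm{Gr}^i$ splits off, so the middle term is free of the expected rank locally. Taking $i = -b$ then yields the statement of the corollary, with total rank $\sum_i \rank \mathrm{Gr}^i D_{\mathrm{dR}}(D_S) = d$.

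There is no serious obstacle here: the only things to verify carefully are the bookkeeping between the filtration indices on $D_{\mathrm{dR}}$ and the weight range $[a,b]$ of the Hodge-Tate structure in step one, and the standard fact that an extension of locally free coherent sheaves by another such sheaf is again locally free coherent, which follows from projectivity over local rings. As a sanity check, Lemma \ref{lem:dR} ensures that the formation of $D_{\mathrm{dR}}(D_S)$ is compatible with base change, so fiberwise the rank over $K \otimes_\Q k(x)$ is $d$ by (\ref{eq:def-de Rham}) applied at the point $x$, confirming the rank computed by the filtration.
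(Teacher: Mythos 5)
Your argument is correct and works, though it reorganizes the logic compared to the paper's proof. The paper's own proof also starts from the observation that $\mathrm{Gr}^i D_{\mathrm{dR}}(D_S) \cong \D_\Sen^n(D_S(i))^\Gamma$ via Lemma \ref{lem:dR-HT}, but uses this only to deduce that $D_{\mathrm{dR}}(D_S)$ is \emph{coherent} as an extension of coherent modules; it then invokes the base-change compatibility of Lemma \ref{lem:dR} to reduce local freeness of rank $d$ to the case where $S$ is a finite extension of $K$, where the claim is immediate from (\ref{eq:def-de Rham}). You instead push local freeness all the way through the filtration: each graded piece is a direct summand of $D_{\mathrm{HT}}(D_S)$, which is locally free of rank $d$ by the Hodge-Tate corollary (which itself performs the reduction to residue fields), hence each graded piece is a finitely generated projective $K\otimes_\Q S$-module; then the descending induction via the short exact sequences $0 \to \mathrm{Fil}^{i+1} \to \mathrm{Fil}^i \to \mathrm{Gr}^i \to 0$ splits locally by projectivity of $\mathrm{Gr}^i$, giving local freeness of $D_{\mathrm{dR}}(D_S)$ with rank $\sum_i \rank \mathrm{Gr}^i = \rank D_{\mathrm{HT}}(D_S) = d$. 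Both routes ultimately lean on the same two inputs (the graded comparison with $\D_\Sen$ and a rank computation reduced to closed points), but yours makes the projectivity of the graded pieces do the work that the paper delegates to a direct coherence-plus-constant-fiber-rank argument; the paper's version is terser, yours makes the structural reason for local freeness more transparent. One small point worth keeping in mind: local freeness of the graded pieces a priori only gives locally constant rank, but the sum of the ranks is constantly $d$ because that is the rank of $D_{\mathrm{HT}}(D_S)$, which you correctly note.
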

\begin{proof}
We first note that for each $i\in\mathbb{Z}$, $\mathrm{Gr}^i(D_{\mathrm{dR}}(D_S))$, which is isomorphic to $\D_\Sen^n(D_S(i))^\Gamma$ by Lemma \ref{lem:dR-HT}, is a coherent $K\otimes_{\Q}S$-module. We then deduce that $D_{\mathrm{dR}}(D_S)$ is a coherent $K\otimes_{\Q}S$-module. Using Lemma \ref{lem:dR}, it then suffices to treat the case that $S$ is a finite extension of $K$; this follows easily from the isomorphism (\ref{eq:def-de Rham}).
\end{proof}

\begin{defn}
Let $X$ be a rigid analytic space over $F$, and let $D_X$ be a family of $\m$-modules of rank $d$ over $X$. We call $D_X$ \emph{de Rham} if for some (hence any) admissible cover $\{M(S_i)\}_{i\in I}$ of $X$, $D_{S_i}$ is de Rham with weights in $[a,b]$ for any $i\in I$. We define $D_{\mathrm{dR}}(D_X)$ to be the gluing of all $D_{\mathrm{dR}}(D_{S_i})$'s.
\end{defn}

\begin{lemma}\label{lem:dR-weight}
If $D_S$ is a de Rham $\m$-module over $\mathbf{B}^\dag_{\rig,K}\widehat{\otimes}_{\Q}S$ of rank $d$ with weights in $[a,b]$, then $t^{-a}\D_\dif^{+,n}(D_S)\subset D_{\mathrm{dR}}(D_S)\otimes_{K\otimes_{\Q}S}(K_n\otimes_{\Q}S)[[t]]\subset t^{-b}\D_\dif^{+,n}(D_S)$ for any $n\geq h_{dR}(D_S)$.
\end{lemma}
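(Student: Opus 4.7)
The plan is to establish the two inclusions separately. The right inclusion falls out directly from condition (2) of Definition \ref{def:dR}, while the left inclusion requires a reduction to the classical single-point case combined with a Nakayama-style argument.

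For the right inclusion $M := D_{\mathrm{dR}}(D_S)\otimes_{K\otimes_{\Q}S}(K_n\otimes_{\Q}S)[[t]] \subset t^{-b}\D_\dif^{+,n}(D_S)$, condition (2) of Definition \ref{def:dR} together with Lemma \ref{lem:dR-inv} gives $D_{\mathrm{dR}}(D_S)\subset t^{-b}\D_\dif^{+,n}(D_S)$ by the very definition of the induced filtration. Since $t^{-b}\D_\dif^{+,n}(D_S)$ is already a $(K_n\otimes_\Q S)[[t]]$-submodule of $\D_\dif^n(D_S)$, multiplying through by $(K_n\otimes_\Q S)[[t]]$ immediately yields $M\subset t^{-b}\D_\dif^{+,n}(D_S)$.

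For the left inclusion $t^{-a}\D_\dif^{+,n}(D_S)\subset M$, I would first dispatch the single-point case: at each closed point $x\in M(S)$, picking a basis $f_1,\ldots,f_d$ of $D_{\mathrm{dR}}(D_x)$ adapted to the Hodge filtration with each $f_i$ at level $-h_i\in[-b,-a]$ gives $\D_\dif^{+,n}(D_x)=\sum_i t^{h_i}f_i(K_n\otimes_{\Q}k(x))[[t]]$, so $t^{-a}\D_\dif^{+,n}(D_x)=\sum_i t^{h_i-a}f_i(K_n\otimes_{\Q}k(x))[[t]]\subset M_x$ since $h_i-a\geq 0$. To upgrade to the family, work locally on an affinoid $M(S)$ over which $\D_\dif^{+,n}(D_S)$ and $M$ are free of rank $d$ over $(K_n\otimes_{\Q}S)[[t]]$. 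By condition (1) of Definition \ref{def:dR}, both become $\D_\dif^n(D_S)$ after inverting $t$, so by commensurability of lattices there is an integer $K\geq -a$ with $\D_\dif^{+,n}(D_S)\subset t^{-K}M$. For any $\tilde q\in\D_\dif^{+,n}(D_S)$, its class $r:=\tilde q+t^aM$ makes sense in $t^{-K}M/t^aM\cong M/t^{K+a}M$, which is a \emph{free} $S$-module of finite rank. By Lemma \ref{lem:dR} (which gives $M\otimes_Sk(x)=M_x$) and the analogous base-change property $\D_\dif^{+,n}(D_S)\otimes_Sk(x)=\D_\dif^{+,n}(D_x)$, the fibre $r(x)$ is the image of $\tilde q(x)$ in $t^{-K}M_x/t^aM_x$, which vanishes by the single-point case. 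Since $S$ is reduced, a section of a free $S$-module vanishing at every closed point must be zero; hence $r=0$, i.e., $\tilde q\in t^aM$, which is the desired inclusion.

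The main obstacle is the base-change compatibility that makes the fibrewise argument meaningful: one needs $D_{\mathrm{dR}}(D_S)\otimes_Sk(x)\cong D_{\mathrm{dR}}(D_x)$ (supplied by Lemma \ref{lem:dR}) and the parallel statement $\D_\dif^{+,n}(D_S)\otimes_Sk(x)\cong \D_\dif^{+,n}(D_x)$ (a formal consequence of the definition of base change for $\varphi$-modules combined with the commutation of the completed tensor product with base change in $S$). The commensurability of $M$ and $\D_\dif^{+,n}(D_S)$ holds only locally on $M(S)$, so the Nakayama step is performed on each affinoid of an admissible cover; since the conclusion of the lemma is local this causes no trouble.
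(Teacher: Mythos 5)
Your proof is correct, but it takes a genuinely different and somewhat longer route than the paper. The paper proves both inclusions directly for the family: the right one exactly as you do, and the left one by combining the graded isomorphism (\ref{eq:lem-dR-HT}) from the proof of Lemma \ref{lem:dR-HT} (which identifies the associated graded of $D_{\mathrm{dR}}(D_S)\otimes(K_n\otimes_{\Q}S)[[t]][1/t]$ with that of the $t$-adic filtration on $\D_\dif^n(D_S)$), the vanishing $\mathrm{Gr}^iD_{\mathrm{dR}}(D_S)=0$ for $i\geq -a+1$, and $t$-adic completeness, to lift elements of $t^{-a}\D_\dif^{+,n}(D_S)$ degree by degree into $D_{\mathrm{dR}}(D_S)\otimes(K_n\otimes_{\Q}S)[[t]]$. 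You instead dispatch the single-point case and then bootstrap to the family by commensurability of lattices, base change (Lemma \ref{lem:dR} plus base change for $\D_\dif^{+,n}$), and vanishing of a section of a free module over a reduced affinoid at all closed points.

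One remark on the single-point step: the assertion $\D_\dif^{+,n}(D_x)=\sum_i t^{h_i}f_i(K_n\otimes_{\Q}k(x))[[t]]$ is true but not an immediate consequence of merely picking a filtration-adapted basis; one needs exactly the graded isomorphism (\ref{eq:lem-dR-HT}) at the point $x$, which shows that $\{t^{h_i}\bar f_i\}$ is a basis of $\D_\Sen^n(D_x)$, followed by a $t$-adic approximation/completeness argument to upgrade the mod-$t$ statement to the full lattice. Once you have invoked those ingredients at a point, the same argument already works over $S$ verbatim (this is the paper's proof), so the subsequent reduction via commensurability and Nakayama, while correct, is an unnecessary detour. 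The family reduction machinery you set up is sound and would be the right tool if the pointwise statement were genuinely easier than the family one, but here it is not.
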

\begin{proof}
Since $\mathrm{Gr}^{-b}D_{\mathrm{dR}}(D_S)=D_{\mathrm{dR}}(D_S)$, we get $D_{\mathrm{dR}}(D_S)\subset t^{-b}\D^{+,n}_\dif(D_S)$; hence $D_{\mathrm{dR}}(D_S)\otimes_{K\otimes_{\Q}S}(K_n\otimes_{\Q}S)[[t]]\subset t^{-b}\D_\dif^{+,n}(D_S)$. By the proof of Lemma \ref{lem:dR-HT}, we know that the natural map (\ref{eq:lem-dR-HT}) is an isomorphism of graded modules. By the facts that $\mathrm{Gr}^iD_{\mathrm{dR}}(D_S)=0$ for $i\geq -a+1$ and $\mathrm{Fil}^i\D_\dif^n(D_S)$ is $t$-adically complete, we thus deduce that $t^{-a}\D_\dif^{+,n}(D_S)\subset D_{\mathrm{dR}}(D_S)\otimes_{K\otimes_{\Q}S}(K_n\otimes_{\Q}S)[[t]]$.
\end{proof}

\begin{lemma}
Let $D_S$ be a Hodge-Tate $\m$-module over $\mathbf{B}^\dag_{\rig,K}\widehat{\otimes}_{\Q}S$ with weights in $[a,b]$. Then for any $k\geq b-a+1$, $i\in[a,b]$, $n\geq h_{HT}(D_S)$ and $\gamma\in\Gamma_n$, the map $\gamma-\chi^i(\gamma):t^k\D_\dif^{+,n}(D_S)\ra t^k\D_\dif^{+,n}(D_S)$ is bijective.
\end{lemma}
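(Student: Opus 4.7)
The plan is to filter $t^k\D_\dif^{+,n}(D_S)$ by powers of $t$, show that $\gamma-\chi^i(\gamma)$ induces a bijection on each graded piece $t^j\D_\dif^{+,n}(D_S)/t^{j+1}\D_\dif^{+,n}(D_S)$ for every $j\geq k$, and then bootstrap to bijectivity on the whole module by $t$-adic completeness. The Hodge-Tate hypothesis enters only to pin down the structure of the graded pieces: by Lemma \ref{lem:HT-criterion} applied at level $n\geq h_{HT}(D_S)$ one has the $\chi^m$-eigenspace decomposition
\[
\D_\Sen^n(D_S)=\bigoplus_{a\leq m\leq b}V_m,\qquad V_m:=\D_\Sen^n(D_S)^{\Gamma_n=\chi^m},
\]
on which every element of $\Gamma_n$ acts by the indicated scalar.

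On the graded piece $t^j\D_\Sen^n(D_S)=\bigoplus_m t^jV_m$, the relation $\gamma(t)=\chi(\gamma)t$ yields, for $\bar x\in V_m$,
\[
(\gamma-\chi^i(\gamma))(t^j\bar x)=\chi^i(\gamma)\bigl(\chi^{j+m-i}(\gamma)-1\bigr)\,t^j\bar x.
\]
Under the constraints $j\geq k\geq b-a+1$, $i\in[a,b]$ and $m\in[a,b]$ one has $j+m-i\geq(b-a+1)+a-b=1$. Since $\chi(\gamma)\in 1+p^n\Z_p$ is of infinite order (any torsion $\gamma$ being implicitly excluded by the desired bijectivity), $\chi^{j+m-i}(\gamma)-1$ lies in $p^n\Z_p\setminus\{0\}$ and is therefore a unit of $\Q_p$. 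Hence the induced operator on each $t^jV_m$ is multiplication by a unit, so the induced operator on $t^j\D_\Sen^n(D_S)$ is bijective.

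To finish, I would use the $t$-adic completeness of $\D_\dif^{+,n}(D_S)$---which holds because it is a finitely generated module over the $t$-adically complete ring $(K_n\otimes_{\Q}S)[[t]]$---to promote bijectivity on graded pieces to bijectivity on $t^k\D_\dif^{+,n}(D_S)$. For injectivity, a nonzero $x$ has a smallest $j_0\geq k$ for which its image $\bar x$ in $t^{j_0}\D_\dif^{+,n}(D_S)/t^{j_0+1}\D_\dif^{+,n}(D_S)$ is nonzero; the image of $(\gamma-\chi^i(\gamma))(x)$ in the same quotient equals the induced (bijective) operator applied to $\bar x$, hence is nonzero. For surjectivity, given $y$ one constructs $x=\sum_{N\geq0}z_N$ with $z_N\in t^{k+N}\D_\dif^{+,n}(D_S)$ by solving the equation successively modulo $t^{k+N+1}$, using the bijection on the $(k+N)$-th graded piece to annihilate the obstruction at each stage; the partial sums converge $t$-adically to the desired preimage. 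No real obstacle arises: the only arithmetic input is that the bound $k\geq b-a+1$ is exactly what prevents the exponent $j+m-i$ from ever vanishing, which is why the hypothesis takes this form.
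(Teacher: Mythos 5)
Your argument is correct and follows the paper's own proof: reduce to graded pieces $t^j\D_\dif^{+,n}(D_S)/t^{j+1}\D_\dif^{+,n}(D_S)$, use the eigenspace decomposition $\D_\Sen^n(D_S)=\oplus_{a\leq m\leq b}\D_\Sen^n(D_S)^{\Gamma_n=\chi^m}$ from Lemma~\ref{lem:HT-criterion} to see $\gamma-\chi^i(\gamma)$ acts by the nonzero scalar $\chi^{j+m}(\gamma)-\chi^i(\gamma)$ on each summand (nonzero precisely because $j+m-i\geq 1$), and then pass from graded pieces to the full module by $t$-adic completeness. You spell out the completeness bootstrap more explicitly than the paper and rightly flag that a torsion (or trivial) $\gamma$ must be excluded, a point the paper leaves implicit, but the substance of the argument is identical.
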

\begin{proof}
Since $\D_\dif^{+,n}(D_S)$ is $t$-adically complete, it suffices to show that
\[
\gamma-1:t^k\D_\dif^{+,n}(D_S)/t^{k+1}\D_\dif^{+,n}(D_S)\ra t^k\D_\dif^{+,n}(D_S)/t^{k+1}\D_\dif^{+,n}(D_S)
\]
is bijective for any $k\geq b-a+1$. Note that $t^k\D_\dif^{+,n}(D_S)/t^{k+1}\D_\dif^{+,n}(D_S)$ is isomorphic to $\D_\Sen^n(D_S(k))$ as a $\Gamma$-module. Note that $\D^n_\Sen(D_S(k))=\oplus_{a\leq j\leq b}(\D^n_\Sen(D_S))^{\Gamma_n=\chi^{j+k}}$ by Lemma \ref{lem:HT-criterion}. Since $j+k\geq b+1$ for all $j\in [a,b]$, we deduce that $\gamma-\chi^i(\gamma)$ is bijective on $\D^n_\Sen(D_S(k))$.
\end{proof}

\begin{lemma}\label{lem:dR-criterion}
Let $D_S$ be a Hodge-Tate $\m$-module over $\mathbf{B}^\dag_{\rig,K}\widehat{\otimes}_{\Q}S$ with weights in $[a,b]$. Then $D_S$ is de Rham if and only if there exists a positive integer $n\geq h_{HT}(D_S)$ such that $\Pi_{i=a}^{2b-a}(\gamma-\chi(\gamma)^i)\D_\dif^{+,n}(D_S)\subset t^{b-a+1}\D_\dif^{+,n}(D_S)$. Furthermore, if this is the case, then (\ref{eq:def-de Rham}) holds for $n$.
\end{lemma}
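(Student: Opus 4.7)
For the forward direction, I use (\ref{eq:def-de Rham}) to identify $\D_\dif^n(D_S)$ with $D_{\mathrm{dR}}(D_S)\otimes_{K\otimes_\Q S}(K_n\otimes_\Q S)[[t]][1/t]$; on this model $\gamma\in\Gamma_n$ acts trivially on $D_{\mathrm{dR}}(D_S)$ and on $K_n\otimes_\Q S$ (since $\Gamma_n$ fixes $K_n$) and scales $t$ by $\chi(\gamma)$. Lemma~\ref{lem:dR-weight} gives $t^b D_{\mathrm{dR}}(D_S)\otimes(K_n\otimes_\Q S)[[t]]\subseteq\D_\dif^{+,n}(D_S)\subseteq t^a D_{\mathrm{dR}}(D_S)\otimes(K_n\otimes_\Q S)[[t]]$. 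Expanding $x\in\D_\dif^{+,n}(D_S)$ as $\sum_{k\ge a}t^k v_k$ with $v_k\in D_{\mathrm{dR}}(D_S)\otimes(K_n\otimes_\Q S)$, the operator $P(\gamma):=\prod_{i=a}^{2b-a}(\gamma-\chi(\gamma)^i)$ scales $t^k v_k$ by $\prod_i(\chi(\gamma)^k-\chi(\gamma)^i)$, which vanishes for $k\in[a,2b-a]$; the surviving tail lies in $t^{2b-a+1}D_{\mathrm{dR}}(D_S)\otimes(K_n\otimes_\Q S)[[t]]\subseteq t^{b-a+1}\D_\dif^{+,n}(D_S)$.

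For the converse, assume the hypothesis at $n\ge h_{HT}(D_S)$ (enlarging $n$ if necessary so that $\chi(\gamma)^i-\chi(\gamma)^j\in(K_n\otimes_\Q S)^\times$ for distinct $i,j\in[a,2b-a]$, which holds for $n$ large by torsion-freeness of $\Gamma_n$). Set $W:=\D_\dif^{+,n}(D_S)/t^{b-a+1}\D_\dif^{+,n}(D_S)$. Lagrange interpolation produces orthogonal idempotents $\pi_j:=\prod_{i\neq j}(\gamma-\chi(\gamma)^i)/\prod_{i\neq j}(\chi(\gamma)^j-\chi(\gamma)^i)$; the hypothesis $P(\gamma)|_W=0$ forces $\sum_j\pi_j=\mathrm{id}_W$, yielding the $\Gamma_n$-equivariant decomposition $W=\bigoplus_{j=a}^{2b-a}W_j$ with $W_j:=W^{\Gamma_n=\chi^j}$ (pro-cyclicity of $\Gamma_n$ upgrades single-$\gamma$ eigenspaces to $\Gamma_n$-eigenspaces).

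For each $j\in[a,b]$, I lift $W_j$ to $N_j\subseteq\D_\dif^{+,n}(D_S)$ via $t$-adic induction: given $w_N\in\D_\dif^{+,n}(D_S)/t^N\D_\dif^{+,n}(D_S)$ with $N\ge b-a+1$ and $\gamma w_N=\chi(\gamma)^j w_N$, any pre-image has $(\gamma-\chi(\gamma)^j)$-error in $t^N\D_\Sen^n(D_S)$, on which $\gamma$ has eigenvalues $\chi^{N+l}$ ($l\in[a,b]$); since $j-N\le b-(b-a+1)=a-1<a$, $\chi^j$ is absent, so the bijectivity lemma preceding forces $\gamma-\chi(\gamma)^j$ bijective there, uniquely correcting to $w_{N+1}$, and the $t$-adic limit gives $N_j\cong W_j$. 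The case $j=b$ is decisive: $W_b$ picks up a $\chi^b$-component from each graded $t^k\D_\Sen^n(D_S)$ for $k\in[0,b-a]$, totalling the full Sen rank $d$; hence $N_b$ has rank $d$, and $t^{-b}N_b\subseteq\D_\dif^n(D_S)^{\Gamma_n}$ is a rank-$d$ $(K_n\otimes_\Q S)$-submodule of $\Gamma_n$-invariants. The multiplication map $t^{-b}N_b\otimes_{K_n\otimes_\Q S}(K_n\otimes_\Q S)[[t]][1/t]\to\D_\dif^n(D_S)$ is an isomorphism of rank-$d$ free $(K_n\otimes_\Q S)[[t]][1/t]$-modules (injective by torsion-freeness, surjective modulo $t^{b-a+1}$ via the eigenspace decomposition of $W$, and hence globally surjective by $t$-adic continuity). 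Finite Galois descent along $\Gamma/\Gamma_n$ recovers $D_{\mathrm{dR}}(D_S)$ of rank $d$ over $K\otimes_\Q S$, establishing (\ref{eq:def-de Rham}) at level $n$. The filtration condition (2) follows from $N_b\subseteq\D_\dif^{+,n}(D_S)$, giving $\mathrm{Fil}^{-b}D_{\mathrm{dR}}(D_S)=D_{\mathrm{dR}}(D_S)$, together with the Hodge-Tate weight bound, which forces $\mathrm{Fil}^{-a+1}D_{\mathrm{dR}}(D_S)=0$.

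\textbf{Main obstacle.} The technical heart is the surjectivity of the multiplication map $t^{-b}N_b\otimes(K_n\otimes_\Q S)[[t]][1/t]\to\D_\dif^n(D_S)$; this requires confirming that the eigenspaces $W_j$ for $j>b$ (which are not directly lifted into $\D_\dif^{+,n}(D_S)$) are absorbed via $t$-multiplication by lifts of $W_j$ for $j\le b$, and carefully tracking how $W_b$ distributes along the $t$-adic filtration of $\D_\dif^{+,n}(D_S)$.
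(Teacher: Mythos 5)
Your forward direction is sound and is essentially the paper's argument (the paper phrases it more compactly as $(\gamma-1)N\subset tN$ and sandwiches $\D_\dif^{+,n}$ between $t^aN$ and $t^bN$, but the content is the same).

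Your converse, however, takes a different and more convoluted route, and the step you yourself flag as ``the technical heart'' has a genuine gap. The paper does not decompose $W=\D_\dif^{+,n}(D_S)/t^{b-a+1}\D_\dif^{+,n}(D_S)$ at all: it lifts each $\Gamma_n$-eigenvector of $\D_\Sen^n(D_S)=\D_\dif^{+,n}(D_S)/t\D_\dif^{+,n}(D_S)$ directly to a $\Gamma_n$-eigenvector of $\D_\dif^{+,n}(D_S)$, in one shot, by first applying the Lagrange projector $\prod_{i\neq j}\frac{\gamma-\chi^i(\gamma)}{\chi^j(\gamma)-\chi^i(\gamma)}$ to an arbitrary lift (this keeps the residue but pushes the error into $t^{b-a+1}\D_\dif^{+,n}$ by the hypothesis) and then removing the residual error with the preceding bijectivity lemma. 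Setting $\tilde E_j$ to be the lift of $E_j:=\D_\Sen^n(D_S)^{\Gamma_n=\chi^j}$ and $M=\bigoplus_j t^{-j}\tilde E_j\subset\D_\dif^n(D_S)^{\Gamma_n}$, surjectivity of $M\otimes(K_n\otimes_{\Q}S)[[t]][1/t]\to\D_\dif^n(D_S)$ is immediate by Nakayama, since $\bigoplus_j\tilde E_j$ reduces mod $t$ to all of $\D_\Sen^n(D_S)$. In contrast, your $W_b$ is a filtered object mixing all the $E_j$'s across $t$-degrees, and your claimed justification for surjectivity --- ``surjective modulo $t^{b-a+1}$ via the eigenspace decomposition of $W$'' --- is not correct as stated: the image of $N_b\cdot(K_n\otimes_{\Q}S)[t]/t^{b-a+1}$ in $W$ is $W_b+tW_b+\cdots+t^{b-a}W_b$, which reduces mod $t$ to only $E_b$, not to all of $\D_\Sen^n(D_S)$, so it is a proper submodule of $W$. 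To repair this you would need to choose a $K_n\otimes_{\Q}S$-basis $\{n_\ell\}$ of $N_b$ compatible with the filtration $N_b\cap t^k\D_\dif^{+,n}(D_S)$, note that each $n_\ell$ is exactly divisible by $t^{k_\ell}$ with $k_\ell\in[0,b-a]$ and that the $t^{-k_\ell}n_\ell$ reduce mod $t$ to a basis of $\D_\Sen^n(D_S)$, whence $\langle N_b\rangle\supset t^{b-a}\D_\dif^{+,n}(D_S)$; but that amounts to redoing the paper's argument with extra bookkeeping. I would recommend the direct lifting of $\D_\Sen^n$-eigenvectors, which sidesteps the difficulty entirely.
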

\begin{proof}
Suppose that $D_S$ is de Rham. Let $n\geq h_{dR}(D_S)$, and put
\[
N=D_{\mathrm{dR}}(D_S)\otimes_{K\otimes_{\Q}S}(K_n\otimes_{\Q}S)[[t]].
\]
Since $D$ has weights in $[a,b]$, by Lemma \ref{lem:dR-weight}, we have $t^{-a}\D_\dif^{+,n}(D_S)\subset N\subset t^{-b}\D_\dif^{+,n}(D_S)$. On the other hand, by the construction of $N$, it is clear that $(\gamma-1)N\subset tN$. It therefore follows that
\[
\Pi_{i=a}^{2b-a}(\gamma-\chi(\gamma)^i)\D_\dif^{+,n}(D_S)\subset
\Pi_{i=a}^{2b-a}(\gamma-\chi(\gamma)^i)(t^aN)\subset t^{2b-a+1}N\subset t^{b-a+1}\D_\dif^{+,n}(D_S).
\]
Now suppose $\Pi_{i=a}^{2b-a}(\gamma-\chi(\gamma)^i)\D_\dif^{+,n}(D_S)\subset t^{b-a+1}\D_\dif^{+,n}(D_S)$ for some $n\geq h_{HT}(D_S)$. We claim that for any $j\in[a,b]$ and $a\in(\D^n_\Sen(D_S))^{\Gamma_n=\chi^j}$, we can lift $a$ to an element in $(\D_{\dif}^{+,n}(D_S))^{\Gamma_n=\chi^j}$. In fact, let $\tilde{a}$ be any lift of $a$ in $\D_\dif^{+,n}(D_S)$, and let 
\[
\tilde{b}=\prod_{a\leq i\leq 2b-a, i\neq j}\frac{\gamma-\chi^i(\gamma)}{\chi^j(\gamma)-\chi^i(\gamma)}\tilde{a}
\]
where $\gamma$ is a topological generator of $\Gamma_n$; it is clear that $\tilde{b}$ is also a lift of $a$.  Furthermore, by assumption, we have $(\gamma-\chi^j(\gamma))(\tilde{b})\in \Pi_{i=a}^{2b-a}(\gamma-\chi(\gamma)^i)\D_\dif^{+,n}(D_S)\subset t^{b-a+1}\D^{+,n}_\dif(D_S)$. By the previous lemma, we choose some $\tilde{c}\in t^{b-a+1}\D^{+,n}_\dif(D_S)$ satisfying $(\gamma-\chi^j(\gamma))(\tilde{b})=(\gamma-\chi^j(\gamma))(\tilde{c})$. It is then clear that $\tilde{b}-\tilde{c}$ is a desired lift of $a$. Since $\D^n_\Sen(D_S)=\oplus_{a\leq i\leq b}(\D^n_\Sen(D_S))^{\Gamma_n=\chi^i}$, we have that $(\D^n_\Sen(D_S))^{\Gamma_n=\chi^i}$ is locally free for each $i\in[a,b]$. By shrinking $M(S)$, we may further suppose that each $(\D^n_\Sen(D_S))^{\Gamma_n=\chi^i}$ is free. We then deduce from the claim that there exists a free $K_n\otimes_{\Q}S$-module $M\subseteq(\D_\dif^{n}(D_S))^{\Gamma_n}$ such that the natural map
\[
M\otimes_{K_n\otimes_{\Q}S}(K_n\otimes_{\Q}S)[[t]][1/t]\longrightarrow \D_\dif^n(D_S)
\]
is an isomorphism. It follows that the natural map
\[
M^\Gamma\otimes_{K\otimes_{\Q}S}(K_n\otimes_{\Q}S)[[t]][1/t]\longrightarrow \D_\dif^n(D_S)
\]
is an isomorphism because $M=M^{\Gamma}\otimes_{K\otimes_{\Q}S}(K_n\otimes_{\Q}S)$ by \cite[Proposition 2.2.1]{BC07}. Taking $\Gamma$-invariants on both sides, we get $M^{\Gamma}=(\D_\dif^n(D_S))^\Gamma$. This implies that $D_S$ is de Rham.
\end{proof}

\begin{prop}\label{prop:dR-family}
Let $S$ be reduced, and Let $D_S$ be a $\m$-module over $\mathbf{B}_{\rig,K}^\dag\widehat{\otimes}S$. Suppose that there exists a Zariski dense subset $Z\subset M(S)$ such that $D_z$ is de Rham with weights in $[a,b]$ for any $z\in Z$ and $\sup_{z\in Z}\{h_{dR}(D_z)\}<\infty$. Then $D_S$ is de Rham with weights in $[a,b]$.
\end{prop}
\begin{proof}
By Proposition \ref{prop:HT-family}, we first have that $D_S$ is Hodge-Tate with weights in $[a,b]$. Let $n\geq \max\{h_{HT}(D_S),\sup_{z\in Z}\{h_{dR}(D_z)\}\}$. By Lemma \ref{lem:dR-criterion}, we have
\[
\Pi_{i=a}^{2b-a}(\gamma-\chi(\gamma)^i)\D_\dif^{+,n}(D_z)\subset t^{b-a+1}\D_\dif^{+,n}(D_z)
\]
for any $z\in Z$. This implies $\Pi_{i=a}^{2b-a}(\gamma-\chi(\gamma)^i)\D_\dif^{+,n}(D_S)\subset t^{b-a+1}\D_\dif^{+,n}(D_S)$ because $S$ is reduced and $Z$ is Zariski dense. Hence $D_S$ is de Rham by Lemma \ref{lem:dR-criterion} again.
\end{proof}
\begin{remark}
The work presented in this paper was finished in the summer of 2012 and made public at the beginning of 2013. After that, the work of Bellovin \cite{Bel13} appeared in the summer of 2013 in which she builded up a more robust theory of families of Hodge-Tate and de Rham representations over rigid analytic spaces.  First of all, she generalized Berger's dictionary, which relates Fontaine's functors to $\m$-modules, to families of $p$-adic representations \cite[Theorem 1.1.1]{Bel13}. This result implies that our theory of families of Hodge-Tate and de Rham $\m$-modules with bounded Hodge-Tate weights developed in \S3 and \S4 can be viewed as a generalization of Berger and Colmez's theory of families of Hodge-Tate and de Rham representations with bounded Hodge-Tate weights. Moreover, she developed a theory of families of ``partial" Hodge-Tate and de Rham representations with bounded Hodge-Tate weights. That is, the periods of the fibers are assumed to be of some constant rank which is not necessarily equal to the rank of the family. In addition, she removes the ``reduced" assumption on the base by considering all artinian points.  We refer the reader to \cite{Bel13} for more results and details.
\end{remark}

\section{$P$-adic local monodromy for families of de Rham $\m$-modules}
The main goal of this section is to prove the $p$-adic local monodromy for the restrictions of families of de Rham $\m$-modules with bounded Hodge-Tate weights on their Shilov boundary. The proof is modeled on Berger-Colmez's proof of the $p$-adic local monodromy for families of de Rham representations with bounded Hodge-Tate weights \cite[\S6]{BC07}.  Recall that $\nabla=\log(\gamma)/\log(\chi(\gamma))$ which is independent of the choice of $\gamma\in\Gamma$. This gives rise to an action of the Lie algebra of $\Gamma$ on  $\m$-modules and their localizations. In the following, we fix $E$ to be a finite extension of the products of the complete residue fields of the Shilov boundary of $M(S)$.

\begin{prop}\label{prop:N_dR}
Let $D_S$ be a de Rham $\m$-module of rank $d$ over $\mathbf{B}_{\rig,K}^\dag\widehat{\otimes}_{\Q}S$ with weights in $[a,b]$. For any $s>0$ such that $n(s)\geq h_{dR}(D_S)$, let
\[
N_s(D_E)=\{y\in t^{-b}D^{s}_E\hspace{2mm}\text{such that}\hspace{2mm}\iota_n(y)\in D_{\mathrm{dR}}(D_S)\otimes_{K\otimes_{\Q}S}(K_n\otimes_{\Q}E)[[t]]\hspace{1mm}\text{for each}\hspace{2mm}n\geq n(s)\}.
\]
Then the following are true.
\begin{enumerate}
\item[(1)]The $\mathbf{B}^{\dag,s}_{\rig,K}\widehat{\otimes}_{\Q} E$-module $N_s(D_E)$ is free of rank $d$ and stable under $\Gamma$.
\item[(2)]We have
$N_s(D_E)\otimes_{\mathbf{B}^{\dag,s}_{\rig,K}\widehat{\otimes}_{\Q}E,\iota_n}(K_n\otimes_{\Q}E)[[t]]
=D_{\mathrm{dR}}(D_S)\otimes_{K\otimes_{\Q}S}(K_n\otimes_{\Q}E)[[t]]$ for each $n\geq n(s)$.
\end{enumerate}
Furthermore, if we put $N_{\mathrm{dR}}(D_E)=N_s(D_E)\otimes_{\mathbf{B}^{\dag,s}_{\rig,K}\widehat{\otimes}_{\Q}E}
\mathbf{B}^{\dag}_{\rig,K}\widehat{\otimes}_{\Q}E$, then the following are true.
\begin{enumerate}
\item[(3)]The $\mathbf{B}^{\dag}_{\rig,K}\widehat{\otimes}_{\Q} E$-module $N_{\mathrm{dR}}(D_E)$ is free of rank $d$, stable under $\Gamma$, and independent of the choice of $s$.
\item[(4)]We have $\varphi^*(N_{\mathrm{dR}}(D_E))=N_{\mathrm{dR}}(D_E)$ and $\nabla(N_{\mathrm{dR}}(D_E))\subset t\cdot N_{\mathrm{dR}}(D_E)$.
\end{enumerate}
\end{prop}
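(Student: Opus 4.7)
The plan is to follow the construction of the $N_{\mathrm{dR}}$ functor from \cite[\S6]{BC07} for de Rham $p$-adic representations, now carried out in the family setting using the theory of Hodge-Tate and de Rham $\m$-modules developed in \S3--\S4. The essential input making this translation possible is the $t$-adic sandwich
\[
t^{-a}\D_\dif^{+,n}(D_E)\subset D_{\mathrm{dR}}(D_S)\otimes_{K\otimes_{\Q}S}(K_n\otimes_{\Q}E)[[t]]\subset t^{-b}\D_\dif^{+,n}(D_E)
\]
provided by Lemma \ref{lem:dR-weight}, together with the base-change compatibility of $D_{\mathrm{dR}}$ from Lemma \ref{lem:dR}; the boundedness $[a,b]$ of Hodge-Tate weights is precisely what forces the candidate submodule $N_s(D_E)$ to have finite ``index'' at every level $n$.

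For (1) and (2), I would first note that $\Gamma$-stability is automatic because each $\iota_n$ is $\Gamma$-equivariant and the target $D_{\mathrm{dR}}(D_S)\otimes(K_n\otimes_{\Q}E)[[t]]$ is $\Gamma$-stable. The substantive assertion is freeness of rank $d$, obtained by the Berger--Colmez saturation argument: starting from a local $K\otimes_{\Q}S$-basis of $D_{\mathrm{dR}}(D_S)$, one lifts its image at each level $n\geq n(s)$ through $\iota_n$ and patches the local lifts into a global rank-$d$ free $\mathbf{B}^{\dag,s}_{\rig,K}\widehat{\otimes}_{\Q}E$-submodule of $t^{-b}D_E^s$. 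The patching converges because the quotient $t^{-b}\D_\dif^{+,n}(D_E)/(D_{\mathrm{dR}}(D_S)\otimes(K_n\otimes_{\Q}E)[[t]])$ is finite free of length at most $b-a$ over $K_n\otimes_{\Q}E$, yielding uniform control across all levels, and the identification in (2) is built into the construction.

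For (3), independence of $s$ is formal: for $s'\geq s$ with $n(s')\geq n(s)$ the defining conditions for $N_{s'}(D_E)$ restrict to those for $N_s(D_E)$, and comparing ranks via (2) yields $N_{s'}(D_E)=N_s(D_E)\otimes_{\mathbf{B}^{\dag,s}_{\rig,K}\widehat{\otimes}_{\Q}E}\mathbf{B}^{\dag,s'}_{\rig,K}\widehat{\otimes}_{\Q}E$. For (4), $\varphi$-stability follows from the compatibility $\iota_{n+1}\circ\varphi=\iota_n$ with the $\varphi$-structure on $D_E$, which exchanges the defining conditions at consecutive levels, so $\varphi^*N_{\mathrm{dR}}(D_E)=N_{\mathrm{dR}}(D_E)$ after an appropriate shift in $s$. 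The connection bound $\nabla(N_{\mathrm{dR}}(D_E))\subset t\cdot N_{\mathrm{dR}}(D_E)$ then drops out of the facts that $D_{\mathrm{dR}}(D_S)$ is annihilated by $\nabla$ and that $\nabla$ sends $(K_n\otimes_{\Q}E)[[t]]$ into $t\cdot(K_n\otimes_{\Q}E)[[t]]$. I expect the main obstacle to be the uniform Berger--Colmez saturation step underlying (1), namely verifying that the level-by-level correction procedure converges over the ring $\mathbf{B}^{\dag,s}_{\rig,K}\widehat{\otimes}_{\Q}E$; this is exactly the purpose of the base-change to the finite extension $E$ of the Shilov-boundary residue fields, which reduces the situation to one where the field-theoretic argument of \cite[\S6]{BC07} applies directly.
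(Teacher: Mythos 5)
Your overall route is the same Berger--Colmez one the paper follows, and your reductions (the sandwich from Lemma \ref{lem:dR-weight}, base change to the Shilov-boundary field $E$, and the $\iota_{n+1}\circ\varphi=\iota_n$ and $\nabla$-on-$t^k$ observations for (4)) are all correct ingredients. But two steps are underdeveloped, and one of them is a genuine gap.

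For (1), your ``lift a basis and patch'' description is vague and is not how the rank is actually pinned down. The paper's argument is structural and cleaner: continuity of $\iota_n$ makes $N_s(D_E)$ a \emph{closed} $\mathbf{B}^{\dag,s}_{\rig,K}\widehat{\otimes}_{\Q}E$-submodule of $t^{-b}D^s_E$, hence finite locally free because $\mathbf{B}^{\dag,s}_{\rig,K}\widehat{\otimes}_{\Q}E$ is a finite product of Robba rings (Bezout), and the inclusion $t^{-a}D^s_E\subset N_s(D_E)$ supplied by Lemma \ref{lem:dR-weight} then forces the rank to be $d$. You never invoke closedness or the Bezout property, and without them your ``patching'' does not produce a well-defined finite free submodule of $t^{-b}D^s_E$.

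For (2), the real gap: you assert that the identification ``is built into the construction,'' but it is not---this is where the work is. An element of $N_s(D_E)$ must satisfy the membership condition at \emph{every} level $m\geq n(s)$ simultaneously, so to show surjectivity of $N_s(D_E)\to D_{\mathrm{dR}}(D_S)\otimes_{K\otimes_{\Q}S}(K_n\otimes_{\Q}E)[[t]]/(t^w)$ one cannot lift level by level: a correction at level $n$ may destroy the condition at levels $m>n$. The paper handles this with Berger's function $t_{n,w}$ from \cite[Lemme I.2.1]{LB04}, which is $\equiv 1 \pmod{t^w}$ under $\iota_n$ but lies in a high power of $t$ under $\iota_m$ for $m>n$; multiplying a crude lift $y_0$ by $t_{n,w}$ makes $\iota_m(t_{n,w}y_0)\in t^{w-b}\D^{+,n}_{\dif}(D_E)\subset D_{\mathrm{dR}}(D_S)\otimes(K_m\otimes_{\Q}E)[[t]]$ for all $m>n$ while keeping $\iota_n(t_{n,w}y_0)\equiv y\pmod{t^w}$. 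Without this device (or an equivalent), your proposal has no mechanism to reconcile the level-$n$ correction with the infinitely many constraints at $m>n$. Everything else---$\Gamma$-stability, (3) from (2), and both halves of (4)---matches the paper.
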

\begin{proof}
First note that the sequence of $K_n\otimes_{\Q}E[[t]]$-modules
\[
\{D_{\mathrm{dR}}(D_S)\otimes_{K\otimes_{\Q}S}(K_n\otimes_{\Q}E)[[t]]\}_{n\geq n(s)}
\]
is $\varphi$-compatible in the sense of \cite[D\'efinition II.1.1]{LB04}. Then by the proof of \cite[Th\'eor\`eme II.1.2]{LB04} (using the fact that $E$ is a finite product of $p$-adic local fields which are endowed with discrete valuations extending the standard one on $\Q$), we see that $N_{\mathrm{dR}}(D_E)$ is the unique $\m$-module $M_E$ contained in $D_E[1/t]$ such that 
\[
M_E^s\otimes_{\mathbf{B}^{\dag,s}_{\rig,K}\widehat{\otimes}_{\Q}E,\iota_n}(K_n\otimes_{\Q}E)[[t]]
=D_{\mathrm{dR}}(D_S)\otimes_{K\otimes_{\Q}S}(K_n\otimes_{\Q}E)[[t]]
\]  
for any $n\geq n(s)$. Furthermore, the proof of [\emph{loc. cit.}] implies the proposition except the second half of (4). To see that, 
note that
\[
\iota_n(\nabla(N_s(D_E)))=\nabla(\iota_n(N_s(D_E)))\subset tD_{\mathrm{dR}}(D_S)\otimes_{K\otimes_{\Q}S}(K_n\otimes_{\Q}E)[[t]].
\] 
This yields that $\nabla(N_s(D_E))\subset t N_s(D_E)$ for all $s$. Hence $\nabla(N_{\mathrm{dR}}(D_E))\subset tN_{\mathrm{dR}}(D_E)$ as $N_{\mathrm{dR}}(D_E)$ is equal to the union of all $N_s(D_E)$. 
\end{proof}


\begin{prop}\label{prop:monodromy}
Keep notations as in Proposition \ref{prop:N_dR}. Then there exists a finite extension $L$ over $K$ such that
\[
M=(N_{\mathrm{dR}}(D_E)\otimes_{\mathbf{B}_{\rig,K}^\dag\widehat{\otimes}_{\Q}E}
\mathbf{B}_{\log,L}^\dag\widehat{\otimes}_{\Q}E)^{I_L}
\]
is a free $L_0'\otimes_{\Q}E$-module of rank $d$ and the natural map
\begin{equation*}
\begin{split}
M\otimes_{L_0'\otimes_{\Q}E}
\mathbf{B}_{\log,L}^\dag\widehat{\otimes}_{\Q}E
\longrightarrow N_{\mathrm{dR}}(D_E)\otimes_{\mathbf{B}_{\rig,K}^\dag\widehat{\otimes}_{\Q}E}
\mathbf{B}_{\log,L}^\dag\widehat{\otimes}_{\Q}E
\end{split}
\end{equation*}
is an isomorphism.
\end{prop}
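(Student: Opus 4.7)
The plan is to adapt \cite[\S6]{BC07} to the present setting, as the author suggests. The input is Proposition \ref{prop:N_dR}: $N_{\mathrm{dR}}(D_E)$ is a free $\m$-module of rank $d$ over $\mathbf{B}_{\rig,K}^\dag\widehat{\otimes}_\Q E$ equipped with a connection $\nabla$ satisfying $\nabla(N_{\mathrm{dR}}(D_E))\subset t\cdot N_{\mathrm{dR}}(D_E)$. Reducing modulo $t$, the residual connection vanishes, which is precisely the setup in which the $p$-adic local monodromy theorem is formulated.

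First I would invoke Kedlaya's $p$-adic monodromy theorem fiberwise. Since the Shilov boundary of $M(S)$ is finite and $E$ is, by assumption, a finite extension of the product of the complete residue fields at those points, specializing $N_{\mathrm{dR}}(D_E)$ at any Shilov boundary point yields a $\m$-module over the Robba ring with coefficients in a complete field, to which Kedlaya's theorem applies. This gives, for each such fiber, a finite extension of $K$ over which the fiber is log-trivialized; taking $L/K$ to be a common finite extension yields a single $L$ that works at all Shilov boundary fibers simultaneously, and we may further enlarge it so that the required $L_0'$ contains enough unramified extensions.

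Next I would carry out the descent that constructs $M$. Setting $M=(N_{\mathrm{dR}}(D_E)\otimes\mathbf{B}_{\log,L}^\dag\widehat{\otimes}_\Q E)^{I_L}$, the commutation of $\varphi$ with $I_L$ makes $M$ a $\varphi$-stable $L_0'\otimes_\Q E$-module carrying a $\sigma$-semilinear Frobenius. Following Berger-Colmez, one builds $I_L$-invariant sections by an iterative Frobenius-approximation scheme that exploits the vanishing of $\nabla$ modulo $t$ (so that obstructions to invariance live in a suitable $t$-torsion and can be killed by repeatedly applying $\varphi$); compatibility with base change from Proposition \ref{prop:N_dR}(3), combined with the fiberwise trivialization from the previous step, allows the construction to proceed uniformly over $M(S)$.

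The principal obstacle is verifying that $M$ has rank exactly $d$ over $L_0'\otimes_\Q E$ and that the natural map into $N_{\mathrm{dR}}(D_E)\otimes\mathbf{B}_{\log,L}^\dag\widehat{\otimes}_\Q E$ is an isomorphism. For the rank, I would exploit that the Shilov boundary controls global norms via the maximum modulus principle: the fiber of $M$ at each Shilov boundary point has rank $d$ by the pointwise monodromy theorem, and this pins down the rank of $M$ as an $L_0'\otimes_\Q E$-module. Given the correct rank, surjectivity of the natural map at every Shilov boundary point, together with both source and target being free of rank $d$ over $\mathbf{B}_{\log,L}^\dag\widehat{\otimes}_\Q E$, forces the determinant of the map to be a unit and hence the map itself to be an isomorphism.
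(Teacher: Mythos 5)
Your overall direction — invoke the $p$-adic local monodromy theorem and follow the descent pattern of \cite[\S6]{BC07} — is right, and you correctly flag $\nabla(N_{\mathrm{dR}}(D_E))\subset t\cdot N_{\mathrm{dR}}(D_E)$ as the key input. But there are two substantive mismatches with the actual argument.

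First, you cite ``Kedlaya's $p$-adic monodromy theorem'' and propose applying it \emph{fiberwise at Shilov boundary points}. This is both redundant and technically off. Redundant, because $E$ is already a finite extension of the product of the complete residue fields at the Shilov points, so ``specializing at a Shilov point'' just means passing to a factor of $E$ — there is nothing further to specialize. Technically off, because the residue fields in question need not be discretely valued, and the form of the $p$-adic local monodromy theorem needed is the one for $p$-adic differential equations with a Frobenius structure over the Robba ring with coefficients in an arbitrary complete valued field of characteristic $0$; this is precisely the version due to Andr\'e and Mebkhout, which is what the paper cites. Concretely, the paper decomposes $\mathbf{B}_{\rig,K}^\dag\widehat{\otimes}_{\Q}E\cong\prod_{i=0}^{f'-1}\r_E^{(i)}$ (with $f'=[K_0':\Q]$), and each factor $N^{(i)}_{\mathrm{dR}}(D_E)$ is, by Proposition~\ref{prop:N_dR}(4), a differential module under $\partial=\nabla/t$ equipped with a Frobenius structure via $\varphi^{f'}$; Andr\'e/Mebkhout then give potential unipotence directly, with no fiber-by-fiber gluing and hence no need for a ``common $L$'' argument.

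Second, your account of the descent step — an ``iterative Frobenius-approximation scheme'' plus a maximum modulus / determinant argument to pin down $\mathrm{rank}\,M=d$ — does not reflect the mechanism. Once potential unipotence over $\mathbf{B}_{\log,L}^\dag\widehat{\otimes}_{\Q}E$ is in hand, the freeness of $M=(N_{\mathrm{dR}}(D_E)\otimes\mathbf{B}_{\log,L}^\dag\widehat{\otimes}_{\Q}E)^{I_L}$ of rank $d$ over $L_0'\otimes_{\Q}E$ and the comparison isomorphism are obtained exactly as in \cite[Proposition 6.2.2, Corollaire 6.2.3]{BC07}; no separate Shilov-boundary norm estimate is invoked, and no iterative Frobenius construction of invariant sections is carried out.
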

\begin{proof}
Let $f'=[K_0':\Q]$. Note that there is a canonical decomposition 
\[
\mathbf{B}_{\rig,K}^\dag\widehat{\otimes}_{\Q}E\cong\prod_{i=0}^{f'-1}\r_E^{(i)}
\]
where each $\r_E^{(i)}$ is isomorphic to $\r_E$ and stable under $\Gamma_K$, and satisfies $\varphi(\r_E^{(i)})\subset\r_E^{(i+1)}$ ($\r_E^{(f')}=\r_E^{(0)}$). Let $N^{(i)}_{\mathrm{dR}}(D_E)=N_{\mathrm{dR}}(D_E)
\otimes_{\mathbf{B}_{\rig,K}^\dag\widehat{\otimes}_{\Q}E}\r_E^{(i)}$. It follows that each $N_{\mathrm{dR}}^{(i)}(D_E)$ is stable under $\partial=\nabla/t$ and $\varphi^{f'}$; hence it is a $p$-adic differential equation with a Frobenius structure. By the versions of the $p$-adic local monodromy theorem proved by Andr\'e \cite{An} or Mebkhout \cite{Meb}, we conclude that each $N^{(i)}_{\mathrm{dR}}(D_E)$ is potentially unipotent. This yields the proposition using the argument of \cite[Proposition 6.2.2]{BC07} and \cite[Corollaire 6.2.3]{BC07}.
\end{proof}

\begin{lemma}\label{lem:monodromy}
Keep notations as in Proposition \ref{prop:monodromy}, and let
\[
M=(N_s(D_E)\otimes_{\mathbf{B}_{\rig,K}^{\dag,s}\widehat{\otimes}_{\Q}E}
\mathbf{B}_{\log,K}^{\dag,s}\widehat{\otimes}_{\Q}E)^{I_L}
\]
for sufficiently large $s$. Then for any $n\geq n(s)$, we have
\begin{equation}\label{eq:lem-monodromy}
L\otimes_{L_0}\iota_n(M)=(\D_\dif(D_E\otimes_{\mathbf{B}_{\rig,K}^\dag\widehat{\otimes}_{\Q}E}
\mathbf{B}_{\rig,L}^\dag\widehat{\otimes}_{\Q}E))^{I_L}
\end{equation}
\end{lemma}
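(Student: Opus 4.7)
The plan is to apply the localization map $\iota_n$ to the structural isomorphism furnished by Proposition \ref{prop:monodromy} and then take $I_L$-invariants on both sides.

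First I would refine Proposition \ref{prop:monodromy} to level $s$: for $s$ sufficiently large the module $M$ in the statement is free of rank $d$ over $L_0 \otimes_{\Q} E$ and the natural map
\[
M \otimes_{L_0 \otimes_{\Q} E} \mathbf{B}_{\log,L}^{\dag,s}\widehat\otimes_{\Q} E \longrightarrow N_s(D_E) \otimes_{\mathbf{B}_{\rig,K}^{\dag,s}\widehat\otimes_{\Q} E} \mathbf{B}_{\log,L}^{\dag,s}\widehat\otimes_{\Q} E
\]
is an isomorphism. This follows from Proposition \ref{prop:monodromy} by $\varphi$-equivariant descent from $\mathbf{B}_{\log,L}^{\dag}$ back to $\mathbf{B}_{\log,L}^{\dag,s}$: both sides are $\varphi$-stable and become isomorphic after base change to the full Robba ring, so by standard Kedlaya-type descent for $\varphi$-bundles they already coincide at level $s$.

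Next I apply $\iota_n$ for $n \geq n(s)$. The map $\iota_n$ extends canonically to $\mathbf{B}_{\log,L}^{\dag,s}\widehat\otimes_{\Q} E \to (L_n\otimes_{\Q} E)[[t]][\ell]$, where $\ell$ denotes the image of the distinguished log element. Localizing the isomorphism above and inverting $t$ produces
\[
\iota_n(M) \otimes_{L_0\otimes_{\Q} E} (L_n\otimes_{\Q} E)[[t]][\ell][1/t] \; \cong \; \iota_n(N_s(D_E)) \otimes_{(K_n\otimes_{\Q} E)[[t]]} (L_n\otimes_{\Q} E)[[t]][\ell][1/t].
\]
By Proposition \ref{prop:N_dR}(2) together with Lemma \ref{lem:dR} (base change to $L$ of de Rham $\m$-modules), the right-hand side is identified with the log enhancement $\D_\dif(D_E\otimes_{\mathbf{B}_{\rig,K}^\dag\widehat\otimes_{\Q} E}\mathbf{B}_{\rig,L}^\dag\widehat\otimes_{\Q} E)[\ell]$.

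Finally I take $I_L$-invariants on both sides. By construction $I_L$ fixes $M$, hence $\iota_n(M)$; moreover $I_L$ acts on $\ell$ by $\ell \mapsto \ell + c(\tau)$ for a nontrivial additive cocycle, so $I_L$-invariants annihilate positive powers of $\ell$. Combined with $L = (L_n)^{I_L}$ for $n$ sufficiently large, the left-hand side reduces to $L\otimes_{L_0}\iota_n(M)$, and the right-hand side reduces to $(\D_\dif(D_E\otimes\mathbf{B}_{\rig,L}^\dag\widehat\otimes_{\Q} E))^{I_L}$, yielding the desired equality. The main obstacle is the careful bookkeeping of the $I_L$-action simultaneously on the coefficient field $L_n\otimes_{\Q} E$ and on the log element $\ell$, and verifying that the resulting inclusion is in fact an equality of free modules of the same rank; this parallels the argument of \cite[Corollaire 6.2.3]{BC07} and requires no new ingredient beyond the machinery developed in the previous sections.
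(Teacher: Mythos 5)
Your proposal takes a genuinely different, and considerably heavier, route than the paper's. The paper's proof is a pure rank comparison: by Proposition \ref{prop:monodromy}, the left-hand side $L\otimes_{L_0}\iota_n(M)$ is a free $L\otimes_{L_0}L_0'\otimes_{\Q}E$-module of rank $d$; the right-hand side obviously contains it, and — because $((L_n\otimes_{\Q}E)[[t]][1/t])^{I_L}=L\otimes_{L_0}L_0'\otimes_{\Q}E$ and the natural map $V^{I_L}\otimes_{R^{I_L}}R\to V$ is injective — the right-hand side is an $L\otimes_{L_0}L_0'\otimes_{\Q}E$-module generated by at most $d$ elements. Over this ring (a finite product of fields) a free rank-$d$ submodule of a module generated by $\le d$ elements forces equality. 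No localization of the isomorphism, no log element $\ell$, no descent to a finite level.

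Your proposal instead tries to transport the isomorphism of Proposition \ref{prop:monodromy} itself through $\iota_n$ and then strip off $I_L$-invariants. This can likely be made to work, but you should be aware that several of the steps you treat as routine carry real content that the paper deliberately sidesteps. The descent of the isomorphism from $\mathbf{B}_{\log,L}^{\dag}$ to a fixed level $s$ is not an application of Kedlaya's slope-theoretic descent for $\varphi$-bundles — the more natural justification is simply that all the modules involved are finitely generated, so the isomorphism is already defined at some finite level, but this still needs to be said. The step where you "take $I_L$-invariants on both sides" requires two separate inputs you only gesture at: that $(-)^{I_L}$ commutes with the tensor product on the left (true because $\iota_n(M)$ is free with trivial $I_L$-action, but that freeness is exactly the content of Proposition \ref{prop:monodromy}, which the paper uses instead to bypass the whole computation), and that positive powers of $\ell$ have no $I_L$-invariants, which depends on the cocycle $\sigma\mapsto c(\sigma)$ being cohomologically nontrivial over $(L_n\otimes_{\Q}E)[[t]][1/t]$ — a true but non-obvious fact. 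Finally, you must check that the isomorphism you obtain after localization actually restricts to the canonical inclusion $L\otimes_{L_0}\iota_n(M)\hookrightarrow(\D_\dif(\cdot))^{I_L}$; otherwise an abstract isomorphism does not by itself give the asserted equality of submodules. None of these is a fatal error, but collectively they make your argument substantially longer and more fragile than the two-line counting argument the paper actually uses.
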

\begin{proof}
By the previous proposition, the left hand side of (\ref{eq:lem-monodromy}) is a free $L\otimes_{L_0}L_0'\otimes_{\Q}E$-module of rank $d$. On the other hand, since $((L_n\otimes_{\Q}E)[[t]][1/t])^{I_L}=L\otimes_{L_0}L_0'\otimes_{\Q}E$, we deduce that the right hand side of (\ref{eq:lem-monodromy}), which obviously contains the left hand side, is an $L\otimes_{L_0}L_0'\otimes_{\Q}E$-module generated by at most $d$-elements. Using the fact that $L\otimes_{L_0}L_0'\otimes_{\Q}E$ is a product of fields, we deduce the desired identity.
\end{proof}

\section{Proof of the main theorem}
Now let $V_X$ be a finite slope family of dimension $d$ over a reduced rigid analytic space $X$ over $F$. We start by making some preliminary reductions. After a finite surjective base change of $X$, we may assume that $Q(T)$ factors as $\prod_{i=1}^m(T-F_i)$. By reordering the $F_i$'s and throwing away some points of $Z$ we may further assume that for all $z\in Z$, $v_p(F_i(z))\geq v_p(F_j(z))$ if $i>j$ and $F_i(z)\neq F_j(z)$ if $F_i\neq F_j$. We then set $\F_{i,z}=
D_{\mathrm{st}}^+(V_z)^{(\varphi^f-F_1(z))\cdots(\varphi^f-F_{i}(z))=0}$ for all $z\in Z$ and $1\leq i\leq m$.
Using Definition \ref{def:fs}(3), we may suppose that $\F_{i,z}\subseteq \F_z$ for all $z\in Z$ and $1\leq i\leq m$ by shrinking $Z$. Furthermore, by the fact that $N\varphi=p\varphi N$ and the condition that $v_p(F_i(z))\geq v_p(F_j(z))$ if $i>j$, we see that $N=0$ on each graded piece $\F_{i,z}/\F_{i-1,z}$.
Let $c_{i,z}$ be the rank of $ \F_{i,z}/\F_{i-1,z}$ over $K_0\otimes k(z)$, and partition $Z$ into finitely many subsets according to the sequence $c_{i,z}$. Note that at least one of these subsets of $Z$ has to be Zariski dense. Replace $Z$ by this subset and set $c_i = c_{i,z}$ for $z\in Z$.

For $z\in Z$, we will inductively set $\m$-submodules $\mathrm{Fil}_{i,z}\subset\D_\rig^\dag(V_z)$ for $1\leq i\leq m$ such that $D^+_{\mathrm{st}}(\mathrm{Fil}_{i,z})=\F_{i,z}$. For $i=1$, since $V_z$ has non-positive Hodge-Tate weights and $N(\F_{1,z})=0$, we have
\[
\F_{1,z}=D^+_{\mathrm{st}}(V_z)^{\varphi^f=F_1(z), N=0}=D^+_{\mathrm{crys}}(V_z)^{\varphi^f=F_1(z)}=\D_\rig^\dag(V_z)^{\Gamma=1,\varphi^f=F_z(z)}
\]
using Berger's dictionary \cite[Th\'eor\`eme 3.6]{LB02}. Let $\mathrm{Fil}_{1,z}$ be the saturation of the $\m$-submodule of $\D_\rig^\dag(V_z)$ generated by $\mathcal{F}_{1,z}$. It is then clear that $D^+_{\mathrm{st}}(\mathrm{Fil}_{1,z})=D_{\mathrm{crys}}^+(\mathrm{Fil}_{1,z})=\F_{1,z}$.  Now suppose we have set $\mathrm{Fil}_{i-1,z}$ for some $i\geq 2$ such that $D^+_{\mathrm{st}}(\mathrm{Fil}_{i-1,z})=\F_{i-1,z}$. It follows that
\[
D_{\mathrm{st}}^+(\D_\rig^\dag(V_z)/\mathrm{Fil}_{i-1,z})=D_{\mathrm{st}}^+(V_z)/\F_{i-1,z}.
\]
Note that
\[
\F_{i,z}/\F_{i-1,z}=(D_{\mathrm{st}}^+(V_z)/\F_{i-1,z})^{\varphi^f=F_{i}(z),N=0}.
\]
Hence
\[
\F_{i,z}/\F_{i-1,z}=D^+_{\mathrm{crys}}(\D_\rig^\dag(V_z)/\mathrm{Fil}_{i,z})^{\varphi^f=F_{i}(z)}\subset
(\D_\rig^\dag(V_z)/\mathrm{Fil}_{i-1,z})^\Gamma.
\]
We set $\mathrm{Fil}_{i,z}$ to be the preimage of the saturation of the $\m$-submodule of $\D_\rig^\dag(V_z)/\mathrm{Fil}_{i-1,z}$ generated by $\F_{i,z}/\F_{i-1,z}$.
Now for each $1\leq i\leq m$, we define the character $\delta_i:K^\times\ra\OO(X)^\times$ by setting $\delta_i(p)=F_i^{-1}$ and $\delta_i(\OO_K^\times)=1$. Let $D_X=\D_\rig^\dag(V_X)^{\vee}$.

\begin{lemma}\label{lem:de Rham-part}
Suppose that $X$ is irreducible. Then for each $0\leq i\leq m$, there exists a proper birational morphism $\pi:X'\ra X$ and a sub-family of $\m$-modules $D^{(i)}_{X'}\subset D_{X'}$ over $X'$ of rank $d-c_1-\dots-c_i$ such that
\begin{enumerate}
\item[(1)]
for any $x\in X'$, the natural map $D_x^{(i)}\ra D_x$ is injective;
\item[(2)]
there exists a Zariski open dense subset $U$ of $X'$ such that for any $z\in Z'=\pi^{-1}(Z)\cap U$, the natural map $D^{(i)}_z\ra D_z$ is the dual of the projection $\D_\rig^\dag(V_{\pi(z)})\ra \D_\rig^\dag(V_{\pi(z)})/\mathrm{Fil}_{i,\pi(z)}$.
\end{enumerate}
\end{lemma}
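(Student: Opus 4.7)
The plan is to proceed by induction on $i$, producing a tower of proper birational morphisms $X_m \to X_{m-1} \to \cdots \to X_0 = X$ together with sub-families $D^{(i)}_{X_i} \subset D_{X_i}$; the lemma then follows by taking $X' = X_m$ and pulling back the successive constructions. The base case $i = 0$ is trivial: set $X_0 = X$ and $D^{(0)}_X = D_X$, for which (1) is tautological and (2) is vacuous. For the inductive step, assume we have $\pi_{i-1}: X_{i-1} \to X$ (proper birational, with $X_{i-1}$ reduced irreducible), $D^{(i-1)}_{X_{i-1}} \subset D_{X_{i-1}}$ of rank $d - c_1 - \cdots - c_{i-1}$, and a Zariski open dense $U_{i-1} \subset X_{i-1}$ satisfying (1) and (2). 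I will apply Proposition \ref{prop:cohomology} to $D^{(i-1)}_{X_{i-1}}$ with the character $\delta_i$.

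The key verification is the $H^0$-dimension hypothesis on a Zariski dense subset. Set $Z_{i-1}' = \pi_{i-1}^{-1}(Z) \cap U_{i-1}$, which is Zariski dense in $X_{i-1}$ since proper birational morphisms of reduced irreducible rigid spaces are surjective on closed points and $U_{i-1}$ is dense. For $z \in Z_{i-1}'$, the inductive hypothesis gives the identification $(D^{(i-1)}_z)^\vee \cong \D_\rig^\dag(V_{\pi_{i-1}(z)})/\mathrm{Fil}_{i-1,\pi_{i-1}(z)}$. Unwinding the definition of $\delta_i$ and its associated $\varphi^f$-action via Berger's dictionary, and crucially using that $N = 0$ on the graded piece $\F_i/\F_{i-1}$ (so that this piece is crystalline and therefore lies in the $\Gamma$-invariants of $\D_\rig^\dag(V)/\mathrm{Fil}_{i-1}$ via Berger's dictionary), one obtains
\[
H^0((D^{(i-1)}_z)^\vee(\delta_i)) \cong D^+_{\mathrm{crys}}(\D_\rig^\dag(V_{\pi_{i-1}(z)})/\mathrm{Fil}_{i-1,\pi_{i-1}(z)})^{\varphi^f = F_i(\pi_{i-1}(z))} = \F_{i,\pi_{i-1}(z)}/\F_{i-1,\pi_{i-1}(z)},
\]
which is $c_i$-dimensional. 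Proposition \ref{prop:cohomology} then supplies a proper birational $\pi_i': X_i \to X_{i-1}$ and a morphism $\lambda: D^{(i-1)}_{X_i} \to M_{X_i} = (\mathbf{B}_{\rig,K}^\dag\widehat{\otimes}_{\Q}\OO_{X_i})(\delta_i)\otimes L$ of $\m$-modules (with $L$ locally free of rank $c_i$) whose fiber-wise image has rank $c_i$ and whose kernel is a family of $\m$-modules of rank $d - c_1 - \cdots - c_i$, with fiber-wise kernel matching on a Zariski open dense $U_i' \subset X_i$. I take $D^{(i)}_{X_i}$ to be the saturation of $\ker(\lambda)$ inside $D_{X_i}$ through the pulled-back inclusion $D^{(i-1)}_{X_i} \hookrightarrow D_{X_i}$, applying Lemma \ref{lem:modification}(2) one more time if needed to ensure the cokernel of this inclusion is locally free, thereby securing fiber-wise injectivity (condition (1)) at every $x \in X_i$. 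Condition (2) is inherited on $U_i := U_i' \cap (\pi_i')^{-1}(U_{i-1})$, still Zariski open dense in $X_i$: at such $z$ the saturation agrees with $\ker(\lambda_z)$, and the saturation step on the fiber corresponds exactly to the saturation used to define $\mathrm{Fil}_i/\mathrm{Fil}_{i-1}$ in the construction of $\mathrm{Fil}_i$ preceding the lemma, so that $D^{(i)}_z$ is identified with the dual of the projection $\D_\rig^\dag(V) \to \D_\rig^\dag(V)/\mathrm{Fil}_i$.

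The main obstacle is the $H^0$-dimension computation at points of $Z_{i-1}'$: one must carefully translate the definition of $\r(\delta_i)$ through its $\varphi^f$-action and Berger's dictionary to match the combinatorial setup of $\F_i/\F_{i-1} \subset \D_\rig^\dag(V)/\mathrm{Fil}_{i-1}$, where the hypothesis $N = 0$ on the graded piece (ensured by the ordering $v_p(F_i) \geq v_p(F_j)$ for $i > j$ from the preliminary reductions) is essential for placing this $\varphi^f$-eigenspace inside the $\Gamma$-invariants. A secondary technical point is guaranteeing fiber-wise injectivity in condition (1) at every $x \in X_i$ rather than merely on $U_i'$, which is handled by the saturation step together with the possible extra application of Lemma \ref{lem:modification}(2).
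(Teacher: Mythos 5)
Your proof follows the same strategy as the paper: induct on $i$, apply Proposition~\ref{prop:cohomology} to $D^{(i-1)}$ with the character $\delta_i$, and verify the $H^0$-dimension hypothesis on $\pi^{-1}(Z)\cap U$ via the inductive identification $(D^{(i-1)}_z)^\vee\cong\D_\rig^\dag(V)/\mathrm{Fil}_{i-1}$ together with the fact that $\F_i/\F_{i-1}$ is crystalline because $N=0$ on it; the new family at step $i$ is the kernel of the resulting $\lambda$. That is exactly the argument in the paper.

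The one departure is your ``saturation'' step, which is both unnecessary and not quite justified as written. You propose to replace $\ker(\lambda)$ by its saturation inside $D_{X_i}$ and invoke Lemma~\ref{lem:modification}(2) to make the cokernel locally free, in order to secure fiberwise injectivity. But Lemma~\ref{lem:modification}(2) applies to a morphism whose fiberwise image has \emph{full} rank $d$ in a target of rank $d$, which the inclusion $\ker(\lambda)\hookrightarrow D_{X_i}$ (of fiberwise image rank $d-c_1-\cdots-c_i$) does not satisfy; nor is saturating a subfamily inside a bigger family obviously again a family of $\m$-modules. None of this is needed: the Tor exact sequence in the proof of Lemma~\ref{lem:ker-birational} already gives an injection $(\ker\lambda)_x\hookrightarrow\ker(\lambda_x)\subset D^{(i-1)}_x$ for \emph{every} $x$, and composing with the inductive inclusion $D^{(i-1)}_x\hookrightarrow D_x$ yields (1) with no further modification. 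Likewise the saturation in the definition of $\mathrm{Fil}_{i,z}$ is automatically respected upon dualizing, since $\mathbf{B}^\dagger_{\rig,K}$-linear duals kill $t$-torsion, so no extra saturation is needed to make (2) come out right. If you simply take $D^{(i)}_{X_i}=\ker(\lambda)$ as the paper does, the rest of your argument goes through cleanly.
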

\begin{proof}
We proceed by induction on $i$. The initial case is trivial. Suppose that for some $1\leq i\leq m$, the lemma is true for $i-1$.
Note that $\mathcal{F}_{i,z}/\mathcal{F}_{i-1,z}$ maps into $\D_\rig^\dag(V_{z})/\mathrm{Fil}_{i,z}$ for any $z\in Z$. Since $\F_{i,z}/\F_{i-1,z}=(D_{\mathrm{crys}}^+(V_z)/\F_{i-1,z})^{\varphi^f=F_{i}(z)}$, we get that $(D^{(i)}_z)^{\vee}(\pi^{*}(\delta_i)(z))$ has $k(z)$-dimension $c_i$ for any $z\in Z'$. Since $Z'$ is Zariski dense in $X'$, by Proposition \ref{prop:cohomology}, after adapting $X'$ and $U$, we may find a sub-family of $\m$-modules $D^{(i)}_{X'}$ of $D^{(i-1)}_{X'}$ with rank $d-c_1-\dots-c_i$ such that
\begin{enumerate}
\item[(1')]$D_x^{(i)}\ra D_x^{(i-1)}$ is injective for any $x\in X'$;
\item[(2')]for any $z\in \pi^{-1}(Z)\cap U$, $D_z^{(i)}$ is the kernel of the dual of the map
\[
    (\mathbf{B}_{\rig,K}^\dag\otimes_{\Q}k(z))\cdot(\mathcal{F}_{i,\pi(z)}/\mathcal{F}_{i-1,\pi(z)})\ra \D_\rig^\dag(V_{\pi(z)})/\mathrm{Fil}_{i,\pi(z)}.
\]
\end{enumerate}
It is clear that (1') and (2') imply (1) and (2) respectively; this finishes the inductive step.
\end{proof}

To prove Theorem \ref{thm:main}, we also need the following lemma.
\begin{lemma}\label{lem:tate-sen}
Let $V_S$ be a free $S$-linear representation of $G_K$ of rank $d$. Then there exists a positive integer $m(V_S)$ such that for any $x\in M(S)$ and $a\in\D_\dif^{+}(V_x)$, if $a$ is $\Gamma$-invariant, then $a\in\D_\dif^{+,m(V_S)}(V_x)$.
\end{lemma}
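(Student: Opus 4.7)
\emph{Plan.} The strategy is to apply Sen theory uniformly across the family $V_S$, leveraging the fact that the family Sen operator has a fixed characteristic polynomial to produce a uniform level beyond which no new $\Gamma$-invariants can appear.

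\emph{Setup.} Since $V_S$ is free of rank $d$ over $S$, the $\m$-module $\D_\rig^\dag(V_S)$ is defined over $\mathbf{B}^{\dag,s_0}_{\rig,K}\widehat{\otimes}_{\Q}S$ for some fixed $s_0>0$. Choose $n_0$ with $r_{n_0}\leq s_0$, so that for every $n\geq n_0$ the module $\D_\dif^{+,n}(V_S)$ is a finite projective $(K_n\otimes_{\Q}S)[[t]]$-module of rank $d$ carrying a continuous semilinear $\Gamma$-action, and for every $x\in M(S)$ we have $\D_\dif^{+}(V_x)=\bigcup_{n\geq n_0}\D_\dif^{+,n}(V_x)$, with the natural transition maps induced from the base change $(K_m\otimes_{\Q}k(x))[[t]]\hookrightarrow(K_n\otimes_{\Q}k(x))[[t]]$.

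\emph{Family Sen operator.} Construct the Sen operator $\Theta\in\mathrm{End}(\D_\Sen^{n_0}(V_S))$ on $\D_\Sen^{n_0}(V_S)=\D_\dif^{+,n_0}(V_S)/t$ by the usual formula $\Theta=\log(\gamma)/\log\chi(\gamma)$ for $\gamma\in\Gamma_{n_0}$ close enough to the identity. Its characteristic polynomial $P_\Theta(T)\in(K_{n_0}\otimes_{\Q}S)[T]$ is a fixed monic polynomial of degree $d$; specialization at $x$ yields the characteristic polynomial of $\Theta_x$ on $\D_\Sen^{n_0}(V_x)$. Because $S$ is affinoid, the spectral norms of the coefficients of $P_\Theta$ are bounded, so the roots of $P_\Theta(x)$ (the generalized Hodge--Tate weights of $V_x$) stay within a bounded region of the algebraic closure as $x$ ranges over $M(S)$.

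\emph{Uniform level via bijectivity.} Using the uniform bound from the previous step, pick $m=m(V_S)\geq n_0$ large enough so that, for every $x\in M(S)$, every $N\geq m$, and every topological generator $\gamma$ of $\Gamma_m$, the operator $\gamma-1$ is bijective on the quotient $\D_\dif^{+,N}(V_x)/\D_\dif^{+,m}(V_x)$. One analyzes this quotient through its $t$-adic filtration: each graded piece is a twist of $\D_\Sen^m(V_x)\otimes_{(K_m\otimes k(x))}((K_N\otimes k(x))/(K_m\otimes k(x)))$ by $t^i$, and $\gamma-1$ decomposes into a Sen-theoretic contribution (controlled by $\chi(\gamma)^i\cdot\chi(\gamma)^{\alpha_j(x)}-1$ where the $\alpha_j(x)$ are the Sen weights) and a Tate-cohomological contribution from $K_N/K_m$. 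The uniform boundedness of the $\alpha_j(x)$ restricts the possible obstructions to a fixed finite range of indices $i$, and raising $m$ eliminates these by Tate's classical vanishing theorem for $\Gamma_m$-cohomology of $K_N/K_m$. Granting this bijectivity, the lemma follows: for $a\in\D_\dif^{+}(V_x)^\Gamma$ contained in $\D_\dif^{+,N}(V_x)$ for some $N\geq m$, the image of $a$ in the quotient is killed by $\gamma-1$ and hence vanishes, so $a\in\D_\dif^{+,m}(V_x)$.

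\emph{Main obstacle.} The hardest step is establishing the bijectivity of $\gamma-1$ on the quotients $\D_\dif^{+,N}(V_x)/\D_\dif^{+,m}(V_x)$ uniformly in $x$ and $N$. Carrying this out requires carefully tracking how the twists by $\chi(\gamma)^i$ interact with both the Sen action on the fibers and the Tate cohomology of the cyclotomic tower $K_m\subset K_N$, and exploiting that the fixed characteristic polynomial $P_\Theta(T)\in S[T]$ forces the generalized Hodge--Tate weights to vary in a bounded region. This uniform boundedness of Sen weights is precisely the feature of working in a family that turns the pointwise Sen-theoretic bound into a bound depending only on $V_S$.
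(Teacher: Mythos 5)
The high-level reduction you set up is sound: if one knows that $\gamma-1$ is bijective on $\D_\dif^{+,N}(V_x)/\D_\dif^{+,m}(V_x)$ for all $N\geq m$ and all $x\in M(S)$, with $m$ depending only on $V_S$, then the lemma follows by the snake-lemma argument you give. However, that bijectivity claim \emph{is} the lemma in disguise, and you never prove it --- you label it ``the main obstacle'' and say ``granting this bijectivity.'' That is a genuine gap, not a routine verification.

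Moreover, the route you sketch for closing that gap is misdirected in an important way. You lean on the boundedness of the roots of the Sen characteristic polynomial $P_\Theta$. But bounded generalized Hodge--Tate weights are not the relevant input: what makes $\gamma-1$ invertible on the Tate-trace complement after passing to a high enough level is the Tate--Sen condition (TS3), namely that one can find a level $m$ (and, crucially, a finite extension $L/K$) over which $\D_{\rig,L}^{\dag,r_m}(V_S)$ is \emph{free} with a basis $\mathrm{e}$ for which the $\gamma$-matrix $G$ satisfies an explicit smallness bound, so that $\gamma-1$ on the complement of the normalized trace $R_{L,m}$ dominates the perturbation coming from $G-1$. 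The paper's proof obtains exactly this from \cite[Th\'eor\`eme 4.2.9]{BC07} and then runs a direct bootstrapping argument on the $t$-adic expansion of $(1-R_{L,m})A$, degree by degree, without invoking the Sen characteristic polynomial at all. Your sketch omits both ingredients: you do not pass to a finite extension $L$ (over $K$ alone $\D_\dif^{+,n}(V_S)$ is in general only projective, not free, so there is no matrix $G$ to control), and you do not produce a concrete estimate on the size of the $\gamma$-matrix at level $m$; the fixed characteristic polynomial of $\Theta$ does not by itself give either. Additionally, in the graded analysis you describe, the twist $\chi(\gamma)^i$ from the $t$-power and the term coming from the $\Gamma$-action on $\D_\Sen^m(V_x)$ do not decouple from the Tate-cohomological contribution of $K_N/K_m$; one needs a perturbation estimate (Tate's $\|(\gamma-1)^{-1}\|\leq p^c$ on the trace complement against the smallness of $G-1$), not an eigenvalue count. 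Without these the bijectivity remains unestablished.
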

\begin{proof}
This is a consequence of the Tate-Sen method. Using \cite[Th\'eor\`{e}me 4.2.9]{BC07}, we first choose a finite extension $L$ over $K$ and some positive integer $m$ so that $\D_{\rig,L}^{\dag,r_m}(V_S)$ is a free $\mathbf{B}_{\rig,L}^{\dag,r_m}\widehat{\otimes}_{\Q}S$-module with a basis $\mathrm{e}=(e_1,\dots,e_d)$. Let $\gamma$ be a topological generator of $\Gamma_{L_m}$ and write $\gamma(e)=eG$ for some $G\in\mathrm{GL}_d(\mathbf{B}_{\rig,L}^{\dag,r_m}\widehat{\otimes}_{\Q}S)$. Recall that by the classical work of Tate \cite{T}, we know that there exists a constant $c>0$ such that $v_p((\gamma-1)x)\leq v_p(x)+c$ for any nonzero $x\in (1-R_{L,m})\widehat{L}_\infty$, where $R_{L,m}:\widehat{L}_\infty\ra L_m$ is Tate's normalized trace map. Since the localization map $\iota_m:\mathbf{B}_{\rig,L}^{\dag,r_m}\ra L_m[[t]]$ is continuous, by enlarging $m$, we may suppose that the constant term of $\iota_m(G)-1$ has norm less than $p^{-c}$. We fix some $m_0\in\mathbb{N}$ such that $K_\infty\cap L_m=K_{m_0}\cap L_m$.

Now let $a\in\D_\dif^{+,K_n}(V_x)^\Gamma$ for some $x\in X$ and $n\geq m$. We will show that $a\in\D_\dif^{+,K_{m_0}}(V_x)^\Gamma$. Since $\iota_m(\mathrm{e})$ forms a basis of $\D^{+,L_n}_{\dif}(V_S)$, we may write $a=\iota_m(\mathrm{e})(x)A$ for some
\[
A\in \mathrm{M}_{d\times1}((L_n\otimes_{\Q}k(x))[[t]]).
\]
The $\Gamma$-invariance of $a$ implies $\iota_m(G(x))\gamma(A)=A$; thus 
\[
(1-R_{L,m})\iota_m(G(x))\gamma(A)=(1-R_{L,m})A.
\]  
Note that $\iota_m(G(x))$ has entries in $(L_m\otimes_{\Q}k(x))[[t]]$.  It follows that $(G(x)-1)B=(1-\gamma^{-1})B$ where $B=(1-R_{L,m})A$. Let $B_0$ be the constant term of $B$. If $B_0\neq0$, then the constant term of $(\iota_m(G(x))-1)B$ has valuation $\geq v(\iota_m(G(x))-1)+v(B_0)>v(B_0)+c$  whereas the constant term $(1-\gamma^{-1})B_0$ of $(1-\gamma^{-1})B$ has valuation $\leq v(B_0)+c$; this yields a contradiction. Hence $B_0=0$. Iterating this argument, we get $B=0$. Hence $a\in \D_\dif^{+,L_m}(V_x)\cap\D_\dif^{+,K_n}(V_x)\subset\D_\dif^{+,K_{m_0}}(V_x)$. Thus we may choose $m(V_S)=m_0$.
\end{proof}

\begin{remark}
Although we do not need this in this paper, it is worthwhile to point out that the argument of Lemma \ref{lem:tate-sen} works equally for families of $\m$-modules and even a sequence of $\varphi$-compatible $K_n[[t]]\widehat{\otimes}_{\Q}S$-modules $\{M_n\}_{n}$ in the vein of \cite[D\'efinition II.1.1]{LB04}. That is, each $M_n$ is a finite projective $K_n[[t]]\widehat{\otimes}_{\Q}S$-module equipped with a continuous $K_n[[t]]$-semilinear and $S$-linear $\Gamma$-action, and satisfies $M_n\otimes_{K_n[[t]]\widehat{\otimes}_{\Q}S} K_{n+1}[[t]]\widehat{\otimes}_{\Q}S\cong M_{n+1}$.
\end{remark}

\subsection*{Proof of Theorem 0.3}
We retain the notations as above. By passing to irreducible components, we may suppose that $X$ is irreducible. We then apply Lemma \ref{lem:de Rham-part} to $V_X$. Note that $V_{X'}$ is again a finite slope family over $X'$ with the Zariski dense set  of semi-stable points $\pi^{-1}(Z)$. We may suppose that $X'=X$. Let $\lambda:\D^\dag_{\rig}(V_X)=D^{\vee}_X\ra (D_X^{(m)})^{\vee}$ be the dual of $D_X^{(m)}\ra D_X$, and let $P_X=\ker(\lambda)$. For any $x\in X$, since $D^{(m)}_x\ra D_x$ is injective, we get that the image of $\lambda_x$ is a $\m$-submodule of rank $d-c_1-\cdots-c_m$. Thus by Lemma \ref{lem:ker-birational}, after adapting $X$, we may assume that $P_X$ is a family of $\m$-modules of rank $c_1+\cdots+c_m$, and there exists a Zariski open dense subset $U\subset X$ such that $P_x=\ker(\lambda_x)$ for any $x\in U$. Note that $\ker(\lambda_z)=\mathrm{Fil}_{i,z}$ for any $z\in Z$. Thus by replacing $Z$ with $Z\cap U$, we may assume that $P_z=\mathrm{Fil}_{i,z}$ for any $z\in Z$. We claim that $P_{X}$ is de Rham with weights in $[-b,0]$. To do so, we set $Y$ to be the set of $x\in X$ for which $P_x$ is de Rham with weights in $[a,b]$. By the previous lemma, we see that for any affinoid subdomain $M(S)\subset X$, there exists an integer $m(V_S)$ such that if $P_x$ is de Rham for some $x\in M(S)$, then $h_{dR}(P_x)\leq m(V_S)$. We then deduce from Proposition \ref{prop:dR-family} that $Y\cap M(S)$ is a Zariski closed subset of $M(S)$. Hence $Y$ is a Zariski closed subset of $X$. On the other hand, since $P_z$ is de Rham with weights in $[-b,0]$, we get $Z\subset Y$; thus $Y=X$ by the Zariski density of $Z$. Furthermore, using Proposition \ref{prop:dR-family} and the previous lemma again, we deduce that $P_X$ is de Rham with weights in $[-b,0]$. As a consequence, we obtain a locally free coherent $\OO_X\otimes_{\Q}K$-module $D_{\mathrm{dR}}(P_X)$ of rank $c_1+\cdots+c_m$.

The next step is to show that for any $x\in X$, $D_{\mathrm{dR}}(P_x)$ is contained in $D^+_{\mathrm{st}}(V_x)\otimes_{K_0}K$. Let $Y$ be the set of $x\in X$ satisfying this condition. We first show that $Y$ is a Zariski closed subset of $X$. For this, it suffices to show that $Y\cap M(S)$ is a Zariski closed subset of $M(S)$ for any affinoid subdomain $M(S)$ of $X$. To show this, we employ the $p$-adic local monodromy for families of de Rham $\m$-modules. As in \S5, let $E$ be the product of the complete residue fields of the Shilov boundary of $M(S)$. Since $P_S$ is a family of de Rham $\m$-modules with weights in $[-b,0]$, by Lemma \ref{lem:monodromy}, there exists a finite extension $L$ of $K$ such that for sufficiently large $s$ and $n\geq n(s)$, we have
\[
L\otimes_{L_0}\iota_n(M)=(\D_\dif(P_E\otimes_{\mathbf{B}_{\rig,K}^\dag\widehat{\otimes}_{\Q}E}
\mathbf{B}_{\rig,L}^\dag\widehat{\otimes}_{\Q}E))^{I_L}
\]
for
$M=(N_s(P_E)\otimes_{\mathbf{B}_{\rig,K}^{\dag,s}\widehat{\otimes}_{\Q}E}
\mathbf{B}_{\log,K}^{\dag,s}\widehat{\otimes}_{\Q}E)^{I_L}$; furthermore, $N_s(P_E)\subset P_E^{s}$. Thus
\[
\iota_n(M)\subset \iota_n(P_E\otimes_{\mathbf{B}_{\rig,K}^{\dag,s}\widehat{\otimes}_{\Q}E}
\mathbf{B}_{\log,K}^{\dag,s}\widehat{\otimes}_{\Q}E)\subset
\iota_n(\D_\rig^\dag(V_E)\otimes_{\mathbf{B}_{\rig,K}^{\dag,s}\widehat{\otimes}_{\Q}E}
\mathbf{B}_{\log,K}^{\dag,s}\widehat{\otimes}_{\Q}E)\subset\mathbf{B}^+_{\mathrm{st}}\widehat{\otimes}_{\Q}V_E.
\]
Note that $D_{\mathrm{dR}}(P_E)\subset \D_\dif^+(P_E)\subset\D_\dif^+(V_E)\subset\mathbf{B}_{\mathrm{dR}}^+\widehat{\otimes}_{\Q}V_E$. This yields
\[
D_{\mathrm{dR}}(P_E)\subset (\mathbf{B}^+_{\mathrm{st}}\widehat{\otimes}_{\Q}V_E)\otimes_{L_0}L\cap \mathbf{B}_{\mathrm{dR}}^+\widehat{\otimes}_{\Q}V_E=
(\mathbf{B}^+_{\mathrm{st}}\widehat{\otimes}_{\Q}V_E)\otimes_{L_0}L.
\]
We therefore deduce from \cite[Lemme 6.3.1]{BC07} that
\[
D_{\mathrm{dR}}(P_S)\subset (\mathbf{B}^+_{\mathrm{st}}\widehat{\otimes}_{\Q}V_E)\otimes_{L_0}L\cap
\mathbf{B}_{\mathrm{dR}}^+\widehat{\otimes}_{\Q}V_S=(\mathbf{B}^+_{\mathrm{st}}\widehat{\otimes}_{\Q}V_S)\otimes_{L_0}L.
\]
It follows that $Y\cap M(S)$, which is the set of $x\in M(S)$ such that $D_{\mathrm{dR}}(P_x)\subset (\mathbf{B}^+_{\mathrm{st}}\otimes_{\Q}V_x)\otimes_{K_0}K$, is Zariski closed in $M(S)$.

To conclude the theorem, it then suffices to show that $D_{\mathrm{dR}}(P_x)\subset (D^+_{\mathrm{st}}(V_x)\otimes_{K_0}K)^{Q(\varphi^f)(x)=0}$ for any $x\in X$; here we $K$-linearly extend the $\varphi^f$-action to $D^+_{\mathrm{st}}(V_x)\otimes_{K_0}K$. Note that $\mathrm{Fil}_{m,z}$ is semi-stable with $D_{\mathrm{st}}(\mathrm{Fil}_{m,z})=\mathcal{F}_{m,z}$. This implies that $Q(\varphi^f)(D_{\mathrm{dR}}(P_X))$ vanishes at $z$, yielding that $Q(\varphi^f)(D_{\mathrm{dR}}(P_X))=0$ by the Zariski density of $Z$.


\begin{thebibliography}{99}
\bibitem{An}Yves Andr\'e, {\it Filtrations de Hasse-Arf et monodromie $p$-adique}, Invent. Math. 148 (2002), no.2, 285-317.
\bibitem{BC06}Joel Bellaiche, Ga\"{e}tan Chenevier, {\it Families of Galois representations and Selmer groups}, Ast\'erisque 324.
\bibitem{LB02}Laurent Berger, {\it Repr\'esentations p-adiques et
\'equations diff\'erentielles}, Invent. Math. 148, 2002, 219-284.
\bibitem{LB04}Laurent Berger, {\it Equations diff\'erentielles $p$-adiques et $(\phi,N)$-modules filtr\'es}, Ast\'erique 319 (2008), 13-38.
\bibitem{LB08}Laurent Berger, {\it Construction de $(\varphi,\Gamma)$-modules : repr\'entations $p$-adiques et $B$-paires}, Algebra and Number Theory 2 (2008), no. 1, 91--120.\bibitem{BC07}Laurent Berger, Pierre Colmez, {\it Familles de repr\'esentations de de Rham et monodromie $p$-adique}, Ast\'erisque 319 (2008), 303--337.
\bibitem{Bel13} Rebecca Bellovin, {\it $p$-adic Hodge theory in rigid analytic families}, http://arxiv.org/abs/1306.5685.
\bibitem{HLTT}Michael Harris, Kai-Wen Lan, Richard Taylor, Jack Thorne, {\it On the rigid cohomology of certain Shimura varieties}, in preparation.
\bibitem{KL10}Kiran S. Kedlaya, Ruochuan Liu, {\it On families of $\m$-modules}, Algebra and Number Theory, Vol. 4, No. 7, 2010.
\bibitem{KPX}Kiran S. Kedlaya, Jay Pottharst, Liang Xiao, {\it Cohomolgy of arithmetic families of $\m$-modules}, to appear in JAMS.
\bibitem{Ki03}Mark Kisin, {\it Overconvergent modular forms and the Fontaine-Mazur conjecture}, Invent. Math. 153(2) (2003), 373-454.

\bibitem{L12}Ruochuan Liu, {\it Triangulation of refined families}, preprint 2012.
\bibitem{Meb}Zoghman Mebkhout, {\it Analogue $p$-adique du th\'eor\`{e}me de Turrittin et le th\'eor\`{e}me de la
monodromie $p$-adique}, Invent. Math. 148 (2002), 319-351.
\bibitem{S}Shrenik Shah, {\it Interpolating periods}, preprint 2013.
\bibitem{SU06}Christopher Skinner, Eric Urban, {\it Vanishing of L-functions and ranks of Selmer groups},
International Congress of Mathematicians. Vol. II, 473-500, Eur. Math. Soc., Z\"{u}rich, 2006.
\bibitem{T}John Tate, {\it $p$-divisible groups}, Proc. Conf. Local Fields (Driebergen, 1966) 158-183, Springer, Berlin.
\end{thebibliography}
\end{document}